\newtheorem{prop}{Proposition}[section]
\newtheorem{theo}[prop]{Theorem}
\newtheorem{lemma}[prop]{Lemma}
\newtheorem{coro}[prop]{Corollary}
\theoremstyle{definition}
\newtheorem{definition}[prop]{Definition}
\newtheorem{remark}[prop]{Remark}
\newtheorem{example}[prop]{Example}
\renewcommand\@biblabel[1]{#1}
\newcommand\DynkinNodeSize{2mm}
\newcommand\DynkinArrowLength{3mm}
\tikzset{
  dnode/.style={
    circle,
    inner sep=0pt,
    minimum size=\DynkinNodeSize,
    fill=white,
    draw},
  middlearrow/.style={
    decoration={markings,
      mark=at position 0.6 with
      {\draw (0:0mm) -- +(+135:\DynkinArrowLength); \draw (0:0mm) -- +(-135:\DynkinArrowLength);},
    },
    postaction={decorate}
  },
  leftrightarrow/.style={
    decoration={markings,
      mark=at position 0.999 with
      {
      \draw (0:0mm) -- +(+135:\DynkinArrowLength); \draw (0:0mm) -- +(-135:\DynkinArrowLength);
      },
      mark=at position 0.001 with
      {
      \draw (0:0mm) -- +(+45:\DynkinArrowLength); \draw (0:0mm) -- +(-45:\DynkinArrowLength);
      },
    },
    postaction={decorate}
  },
  sedge/.style={
  },
  dedge/.style={
    middlearrow,
    double distance=0.5mm,
  },
  tedge/.style={
    middlearrow,
    double distance=1.0mm+\pgflinewidth,
    postaction={draw}, 
  },
  infedge/.style={
    leftrightarrow,
    double distance=0.5mm,
  }
}
\newcommand\revddots{\mathinner{\mkern1mu\raise\p@\vbox{\kern7\p@\hbox{.}}\mkern2mu\raise4\p@\hbox{.}\mkern2mu\raise7\p@\hbox{.}\mkern1mu}}
\def\revddots{\mathinner{\mkern1mu\raise\p@\vbox{\kern7\p@\hbox{.}}\mkern2mu\raise4\p@\hbox{.}\mkern2mu\raise7\p@\hbox{.}\mkern1mu}}
\newcommand\bmat{\begin{pmatrix}}
\newcommand\emat{\end{pmatrix}}
\newcommand\longto{{\, \longrightarrow \, }}
\newcommand\RR{\mathbb{R}}\newcommand\PP{\mathbb P}
\newcommand\CC{\mathbb{C}}
\newcommand\NN{\mathbb{N}}\newcommand\ZZ{\mathbb{Z}}
 \newcommand\KK{\mathbb{K}}\newcommand\AAA{\mathbb A}
\newcommand\GG{\mathbb{G}}
\newcommand\inv{{^{-1}}}
\newcommand\Li{{\mathcal
    L}}
    \newcommand\Fc{{\mathcal F}}
    \newcommand\Rc{{\mathcal R}}
\newcommand\Spec{{\operatorname{Spec}}}
\newcommand\Specbf{{\operatorname{\textbf{Spec}}}}
\newcommand\Proj{{\operatorname{Proj}}}
\newcommand\Pic{{\operatorname{Pic}}}
\newcommand\divv{{\operatorname{div}}}
\newcommand\Divv{{\operatorname{Div}}}
\newcommand\Sym{{\operatorname{Sym}}}
\newcommand\rddots
\newcommand\Orb{{\mathcal O}}
\newcommand\Hom{{\operatorname{Hom}}}
\newcommand\Id{{\operatorname{Id}}}
\newcommand\HH{{\operatorname{H}}}
\newcommand{\clu}{\mathcal{A}}
\newcommand{\uclu}{\overline{\mathcal{A}}}
\newcommand{\upp}{\mathcal{U}}
\newcommand{\val}{\mathcal{V}}
\newcommand{\cval}{\mathcal{C}\mathcal{V}}
\newcommand\coef{{\operatorname{Cf}}}
\newcommand{\lX}{{\mathfrak{X}}}
\newcommand{\lif}{\upharpoonleft \kern-0.35em}
\newcommand{\spc}{\kern0.2em}
\newcommand{\ii}{\mathbf{i}}
\newcommand\Cox{\operatorname{Cox}}
\title{Cluster structures on Cox rings}
\author{Luca Francone}
\date{}
\begin{document}

\maketitle

\begin{abstract}

We use the results of \cite{francone2023minimal} to construct a graded cluster algebra structure on the Cox ring of a smooth complex variety $Z$, depending on a base cluster structure on the ring of regular functions of an open subset $Y$ of $Z$. 
After considering some elementary examples of our technique, including toric varieties, we discuss the two main applications.
First: if $Z$ is a flag variety and $Y$ is the open Schubert cell, we prove that our results recover, by geometric methods, a well known construction of Geiss, Leclerc and Schr\"oer \cite{geiss2008partial}.
    Second: we define the \textit{diagonal partial compactification} of a (finite type) cluster variety, and prove that its Cox ring is a  graded upper cluster algebra.
 Along the way, we explain how similar constructions can be done if we replace the Cox ring with a ring of global sections of a sheaf of divisorial algebras on $Z$.

\end{abstract}

\tableofcontents

\section{Introduction}

Let $Z$ be a  smooth complex algebraic variety whose Picard group $\Pic(Z)$ is free and finitely generated.
The Cox ring $\Cox(Z)$ of $Z$ is a $\Pic(Z)$-graded algebra, introduced by Hu and Keel \cite{hu2000mori}, generalizing a well known construction of Cox \cite{cox1995homogeneous}.
The homogeneous  components of $\Cox(Z)$ can be identified with global sections of line bundles on $Z$.
Namely, we have a decomposition and isomorphisms

$$
\Cox(Z)= \bigoplus_{[\Li] \in \Pic(Z) } \Cox(Z)_{[\Li]}, \qquad  \Cox(Z)_{[\Li]} \simeq \HH^0(Z, \Li).
$$
The multiplication of homogeneous elements of the ring $\Cox(Z)$ is, roughly speaking, given by the tensor product of the corresponding sections. 
Cox rings have proven to be fundamental objects in birational geometry.
For instance, if $\Cox(Z)$ is finitely generated as a $\CC$-algebra, all the small modifications of $Z$ can be obtained as GIT quotients of a natural torus action on the spectrum of the ring $\Cox(Z).$
Moreover, Cox rings played a central role in the groundbreaking work of Birkar, Cascini, Hacon, and McKernan \cite{Birkar2010}, which marked a significant advancement in the development of the minimal model program.

\bigskip

Cluster and upper cluster algebras are  commutative rings of combinatorial nature, defined by Fomin and Zelevinsky \cite{fomin2002clusterfoundation} \cite{berenstein2005cluster3}, and that in the last twenty years have had a tremendous impact on many areas of mathematics.
We refer to \cite{keller2012cluster} for an excellent survey on cluster algebras and their connection to other subjects.
We prefer to postpone the precise definition
 of cluster and upper cluster algebras in Section \ref{Sec: cluster algebras reminders}, to avoid unnecessary technicalities in this introduction.
 
\bigskip

The main contribution of this paper is the following. 
Let $Y$ be an open subset of $Z$. 
Suppose that the ring of regular functions $\Orb_Y(Y)$ on $Y$ is an upper cluster algebra $\uclu$, satisfying the technical but mild hypothesis described in Theorem \ref{thm: min mon lifting}.
Then, the following result holds.

\begin{theo}
\label{thm: intro 1}
  Assume that $\Orb_Y(Y)^\times = \CC^\times$ and $\Pic(Y)= \{0\}$. Then, the Cox ring of $Z$ contains a $\Pic(Z)$-graded sub-algebra $\lif \uclu$ which is an upper cluster algebra. 
Moreover, there exists an homogeneous element $M$ in $\lif \uclu$, such that the equality between the localized rings $\lif \uclu_M$ and $ \Cox(Z)_M$ holds. 
\end{theo}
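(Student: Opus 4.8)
The plan is to realise $\Cox(Z)$ explicitly as a graded ring of global sections of a sheaf of divisorial algebras over $Z$ whose ``constant part'' is $\uclu$, and then to feed this into Theorem \ref{thm: min mon lifting}. \textbf{First} I would analyse the complement $D:=Z\setminus Y$ and its codimension-one components $D_1,\dots,D_r$. Working with divisor classes (so $\Cl(Z)=\Pic(Z)$ since $Z$ is smooth, and the codimension $\ge 2$ part of $D$ is irrelevant), the excision sequence $\bigoplus_i\ZZ[D_i]\xrightarrow{\ \alpha\ }\Pic(Z)\to\Pic(Y)\to 0$ together with $\Pic(Y)=\{0\}$ makes $\alpha$ surjective; and if $\sum_i m_i D_i=\divv(f)$ for $f\in\CC(Z)^\times=\CC(Y)^\times$, then $f$ has no zeros or poles on $Y$, so $f\in\Orb_Y(Y)^\times=\CC^\times$, forcing $\sum_i m_i D_i=0$ as a divisor and hence $\alpha$ injective. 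Thus $\Pic(Z)\cong\bigoplus_{i=1}^r\ZZ[D_i]\cong\ZZ^r$, and it is precisely here that both hypotheses of the theorem are used.

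\textbf{Next}, fixing the canonical sections $t_i\in\HH^0(Z,\Orb_Z(D_i))$ and using the identification $\HH^0(Z,\Orb_Z(\sum_i m_iD_i))=\{f\in\CC(Y):\divv_Z(f)+\sum_i m_iD_i\ge 0\}$, I would write $\Cox(Z)=\bigoplus_{m\in\ZZ^r}L_m\,t^m\hookrightarrow\uclu[t_1^{\pm1},\dots,t_r^{\pm1}]$ with $L_m=\{f\in\CC(Y):\divv_Z(f)+\sum_i m_iD_i\ge 0\}$, and observe that an element of $\CC(Y)$ lies in some $L_m$ with $m$ componentwise large exactly when it is regular on $Y$; hence inverting $t:=t_1\cdots t_r$ removes all the conditions along the $D_i$ and gives $\Cox(Z)_{t}=\Orb_Y(Y)[t_1^{\pm1},\dots,t_r^{\pm1}]=\uclu[t_1^{\pm1},\dots,t_r^{\pm1}]$. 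This exhibits $\Cox(Z)$ in precisely the divisorial-algebra form to which Theorem \ref{thm: min mon lifting} applies: the grading group is the free group $\Pic(Z)$, the constant part after inverting $t$ is $\uclu$, and the model is independent of the choice of the $t_i$ up to $\uclu^\times=\CC^\times$.

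\textbf{Then} I would apply Theorem \ref{thm: min mon lifting}, whose mild hypothesis on $\uclu$ we are assuming: the minimal monomial lifting of the seeds of $\uclu$ (replacing each cluster variable $x$ by the homogeneous element $x\,t^{d(x)}$, where $d(x)\in\ZZ_{\ge 0}^r$ is the least twist for which $x\,t^{d(x)}$ extends to a global section on $Z$) produces a $\Pic(Z)$-graded upper cluster algebra $\lif\uclu$. That it is a graded subalgebra of $\Cox(Z)$ follows because all lifted cluster variables, in every seed, are by construction global sections, and the Laurent phenomenon identifies $\lif\uclu$ with the intersection of the Laurent rings of the lifted seeds inside $\uclu[t^{\pm}]$, which meets $\Cox(Z)$ in exactly $\lif\uclu$. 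For the homogeneous $M$ I would take the product of the variables of one fixed lifted seed together with the canonical sections $t_1,\dots,t_r$ (which the construction places among the frozen variables of $\lif\uclu$); since the $t_i$ then become units, $\lif\uclu_{M}=\uclu_{\bar M}[t_1^{\pm1},\dots,t_r^{\pm1}]=(\uclu[t_1^{\pm1},\dots,t_r^{\pm1}])_{\bar M}=\Cox(Z)_{t\bar M}=\Cox(Z)_{M}$, where $\bar M$ is the product of the underlying seed's variables in $\uclu$ and the last equality uses that $M$ and $t\bar M$ agree up to a unit of $\Cox(Z)_t$.

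\textbf{The main obstacle}, granting Theorem \ref{thm: min mon lifting}, is the geometric translation carried out in the first two steps: checking that for an arbitrary smooth $Z$ the Cox ring is literally of the required divisorial-algebra shape with grading group $\Pic(Z)$ — this is exactly where $\Pic(Y)=\{0\}$ (forcing the $[D_i]$ to span $\Pic(Z)$) and $\Orb_Y(Y)^\times=\CC^\times$ (forcing them to be a basis and pinning down the canonical sections) are indispensable. The cluster-theoretic content — compatibility of the lifted mutations, the Laurent phenomenon for $\lif\uclu$, and the precise form of the localisation — is then delegated wholesale to Theorem \ref{thm: min mon lifting} and \cite{francone2023minimal}.
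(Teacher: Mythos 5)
Your proposal is correct and follows essentially the same route as the paper: both hypotheses enter exactly as in Lemma \ref{lem:pic(Z) with nice open} (so that $\Pic(Z)$ is freely generated by the boundary divisor classes, with the codimension $\geq 2$ components discarded as in Remark \ref{rem: assumptions and invertible}), the Cox ring is realised as the divisorial algebra $\bigoplus_{m}\HH^0\bigl(Z,\Orb\bigl(\sum_i m_iE_i\bigr)\bigr)$ with the canonical sections as the new frozen variables, the cluster-theoretic content is delegated wholesale to Theorem \ref{thm: min mon lifting}, and the localisation equality is obtained by inverting the canonical sections (the paper takes $M=\prod_{d\in D}\sigma_d$; your larger homogeneous $M$ works just as well, since further localising the paper's equality preserves it). The only step you leave implicit is the bridge that makes Theorem \ref{thm: min mon lifting} literally applicable, namely the characteristic space $\Specbf(\Rc_L)$ with its $T$-action and the check of Definition \ref{def: suitable lifting} (Proposition \ref{prop:Line bund suitable lift}); this is exactly what Sections \ref{sec:charact space}--\ref{sec: case of Cox} supply and is routine bookkeeping rather than a gap.
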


In the notation of Theorem \ref{thm: intro 1}, the upper cluster algebra $\lif \uclu$ is obtained by means of a geometric and combinatorial procedure developed in \cite{francone2023minimal}, called \textit{minimal monomial lifting}.
The minimal monomial lifting can be viewed as an homogenization technique that uses  geometric information about $Z$ to enhance  the algebra $\uclu$ into the graded algebra $\lif \uclu$, while retaining the essential combinatorial properties of the upper cluster algebra $\uclu$.

In general, the inclusion of the upper cluster algebra $\lif  \uclu$ into the ring $\Cox(Z)$ may be strict.
Although this phenomenon, the importance of the upper cluster algebra $\lif \uclu$ lies in the result below. 

\begin{theo}
\label{thm: intro 2}
    There exists at most one $\Pic(Z)$-graded upper cluster algebra $ \uclu^{\dagger}$ satisfying the compatibility condition with $\uclu$ described in Theorem \ref{thm: uncity min mon lifting}, and such that $\Cox(Z)=  \uclu^{\dagger}$ as graded algebras. 
    If $ \uclu^{\dagger}$ exists, then we have that $ \uclu^{\dagger}= \lif \uclu.$
\end{theo}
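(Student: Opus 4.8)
The plan is to reduce the statement to the uniqueness of the minimal monomial lifting, Theorem \ref{thm: uncity min mon lifting}, so that the only real work is checking that a hypothetical $\uclu^{\dagger}$ satisfies the hypotheses of that theorem. I would begin by recalling that a graded upper cluster algebra is determined, as a graded subring of its ambient field of fractions, by its graded initial seed: it is the intersection over all mutations of the associated Laurent polynomial rings, with the grading inherited from the initial cluster. Hence two $\Pic(Z)$-graded upper cluster algebras lying in the same field and sharing the same graded seed coincide, and it suffices to prove that any $\uclu^{\dagger}$ as in the statement has the same graded seed as $\lif \uclu$.

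Next I would set up the ambient data. By Theorem \ref{thm: intro 1}, $\lif \uclu$ is a $\Pic(Z)$-graded upper cluster algebra inside $\Cox(Z)$ with $\lif \uclu_M = \Cox(Z)_M$; in particular $\lif \uclu$ and $\Cox(Z)$ have the same field of fractions, hence so do $\lif \uclu$ and $\uclu^{\dagger}$. By construction (Theorem \ref{thm: min mon lifting}), the graded initial seed of $\lif \uclu$ is obtained from a seed of $\uclu$ by a monomial homogenization against the canonical sections of the boundary divisors of $Z$, taken of minimal $\Pic(Z)$-degree, and this seed satisfies the compatibility condition of Theorem \ref{thm: uncity min mon lifting}. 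Now let $\uclu^{\dagger}$ be as in the statement. Since $\uclu^{\dagger}$ is a $\Pic(Z)$-graded upper cluster algebra with $\Cox(Z) = \uclu^{\dagger}$ as graded algebras, satisfying that same compatibility condition with $\uclu$, I would show that its graded initial seed is again a monomial lifting of a seed of $\uclu$ of the kind to which Theorem \ref{thm: uncity min mon lifting} applies; that theorem then forces this lifted seed to be the one of $\lif \uclu$. By the first paragraph this yields $\uclu^{\dagger} = \lif \uclu$, whence the asserted uniqueness; as a byproduct, the inclusion $\lif \uclu \subseteq \Cox(Z)$ of Theorem \ref{thm: intro 1} becomes an equality.

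I expect the main obstacle to be precisely the step in the second paragraph where the hypothesis ``$\Cox(Z) = \uclu^{\dagger}$ as graded algebras, compatibly with $\uclu$'' is converted into an input for Theorem \ref{thm: uncity min mon lifting}: concretely, one must exclude a graded upper cluster algebra structure on $\Cox(Z)$ whose cluster and frozen variables carry $\Pic(Z)$-degrees strictly larger than the minimal ones defining $\lif \uclu$. This is where the standing assumptions $\Orb_Y(Y)^\times = \CC^\times$ and $\Pic(Y) = \{0\}$ are used: they force $\Cox(Z)$ to have no nonconstant homogeneous units, which rigidifies the degrees of the frozen variables exactly up to the normalization already built into the minimal monomial lifting, while the equality $\lif \uclu_M = \Cox(Z)_M = \uclu^{\dagger}_M$ from Theorem \ref{thm: intro 1} propagates this rigidity from the localization back to $\uclu^{\dagger}$, pinning down the remaining degrees. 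Once this compatibility is verified, the conclusion follows by the formal seed-matching argument above.
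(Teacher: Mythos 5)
Your core reduction is the paper's own: Theorem \ref{thm: intro 2} is stated (and proved) as a direct reformulation of Theorem \ref{thm: uncity min mon lifting}, so once one recalls that an upper cluster algebra is determined by any one of its seeds, the whole proof is to apply that theorem to the defining seed of $\uclu^{\dagger}$. The one point to correct is the ``main obstacle'' you anticipate in your last paragraph: it does not exist, because the compatibility condition is, by the wording of the statement, precisely the hypotheses of Theorem \ref{thm: uncity min mon lifting} --- a seed $\bar t$ defining $\uclu^{\dagger}$ that $D$-extends $t$, whose cluster variables and once-mutated variables are $\Pic(Z)$-homogeneous and restrict under $\iota^*$ to those of $t$ --- while condition 1 of that theorem is exactly the assumed equality $\uclu^{\dagger}=\Cox(Z)=\Orb_{\lX_L}(\lX_L)$. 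There is therefore nothing to ``convert'': Theorem \ref{thm: uncity min mon lifting} immediately yields $\bar t=\lif t^D$, hence $\uclu^{\dagger}=\uclu(\lif t^D)=\lif\uclu$, which gives both uniqueness and the identification. In particular, the degree-rigidity you try to derive from $\Orb_Y(Y)^\times=\CC^\times$, $\Pic(Y)=\{0\}$ and the localization $\lif\uclu_M=\Cox(Z)_M$ is not what pins down the seed: that rigidity is the content of Theorem \ref{thm: uncity min mon lifting} itself, proved in \cite{francone2023minimal}; the two standing assumptions only serve (via Lemma \ref{lem:pic(Z) with nice open} and Proposition \ref{prop:Line bund suitable lift}) to make the Cox setup a homogeneously suitable for lifting scheme so that the lifting machinery applies at all.
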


Essentially, $\lif \uclu$ provides the only possible way to identify the graded ring $\Cox(Z)$ with a graded upper cluster algebra compatible with $\uclu$.
Theorem \ref{thm: intro 2} is a less precise formulation of the more detailed and technical Theorem \ref{thm: uncity min mon lifting}, which is a general result on minimal monomial lifting.
The question  of whether $\lif \uclu$ coincides with $\Cox(Z)$ can be studied, geometrically, by means of Proposition \ref{prop: equality conditions}.

\bigskip

In this paper, after discussing some elementary examples of cluster structures on Cox rings constructed by means of Theorem \ref{thm: intro 1}, including toric varieties, we give two main applications.
 
\bigskip

\textbf{1. Flag varieties and a construction of Geiss, Leclerc and Schr\"oer.} 
Let  $Z^-$ be a partial flag variety of a semisimple simply connected algebraic group $G$. 
The Cox ring of $Z^-$ is an important object for the representation theory of $G$.
Indeed, by the the Borel-Weil theorem, its homogeneous components can be naturally identified with the irreducible representations of $G$.
By the work of Goodearl and Yakimov \cite{goodearl2021integral} and of Geiss, Leclerc and Schr\"oer \cite{geiss2011kac}, the ring of regular functions of the open Schubert cell $Y^-$ can be identified with an upper cluster algebra.
Theorem \ref{thm: intro 1} implies that the ring $\Cox(Z^-)$ contains a graded upper cluster sub-algebra obtained by minimal monomial lifting of this base cluster structure on $\Orb_{Y^-}(Y^-).$
We establish the following result, which is a loose version of Theorem \ref{thm: complete flag}, and has essentially been proved in \cite{francone2023minimal}.
\begin{theo}
\label{thm: intro 3}
    Assume that $Z^-$ is the complete flag variety of $G$. That is, $Z^-=B^- \setminus G$ for some Borel subgroup $B^-$ of $G$. 
    Let $\uclu$ be the upper cluster algebra structure on $\Orb_{Y^-}(Y^-)$ considered in \cite{goodearl2021integral} and \cite{geiss2011kac}.
    Then, the upper cluster algebra $\lif \uclu$ of Theorem \ref{thm: intro 1} coincides with the ring $\Cox(Z^-).$
\end{theo}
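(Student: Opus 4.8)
The plan is to verify the hypotheses of Theorem \ref{thm: intro 1} for the pair $(Z^-, Y^-)$ and then argue that in the complete flag case the resulting inclusion $\lif \uclu \hookrightarrow \Cox(Z^-)$ is actually an equality, rather than merely a localization-equality. First I would check the standing assumptions: when $Z^- = B^- \backslash G$ is the complete flag variety, the open Schubert cell $Y^-$ is the big cell, isomorphic to the unipotent radical $U$ (an affine space), so $\Orb_{Y^-}(Y^-)^\times = \CC^\times$ and $\Pic(Y^-) = \{0\}$ hold trivially. By \cite{goodearl2021integral} and \cite{geiss2011kac} the coordinate ring $\Orb_{Y^-}(Y^-)$ carries the upper cluster algebra structure $\uclu$, and one must confirm it satisfies the mild technical hypothesis of Theorem \ref{thm: min mon lifting} — this should already be recorded in \cite{francone2023minimal}. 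Granting this, Theorem \ref{thm: intro 1} produces a $\Pic(Z^-)$-graded upper cluster algebra $\lif \uclu \subseteq \Cox(Z^-)$ together with a homogeneous element $M$ such that $\lif \uclu_M = \Cox(Z^-)_M$.

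The core of the argument is to promote this equality-after-localization to an equality on the nose. Here I would exploit two facts specific to the complete flag variety. On the one hand, by Borel–Weil the homogeneous component $\Cox(Z^-)_{[\Li_\lambda]}$ is the irreducible $G$-module $V(\lambda)^*$ of highest weight $\lambda$, so $\Cox(Z^-) = \bigoplus_{\lambda \in \Lambda^+} V(\lambda)^*$ is the multiplicative structure on the direct sum of all irreducibles — a ring with no zero divisors whose graded pieces are all finite-dimensional and whose unit group is $\CC^\times$ with $\Pic$ known. On the other hand, the minimal monomial lifting is designed (cf.\ the discussion after Theorem \ref{thm: intro 1}) to produce the \emph{smallest} graded lift compatible with $\uclu$; combined with Theorem \ref{thm: intro 2}, which says $\lif \uclu$ is the \emph{only} candidate for a graded upper cluster algebra realizing $\Cox(Z^-)$, it suffices to exhibit \emph{some} graded upper cluster structure on $\Cox(Z^-)$ compatible with $\uclu$ — uniqueness then forces it to be $\lif \uclu$, and hence $\Cox(Z^-) = \lif \uclu$. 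Concretely I would either invoke the explicit cluster structure on the multi-homogeneous coordinate ring of $G/B$ (equivalently on $\CC[G]$ suitably graded) coming from the Geiss–Leclerc–Schröer / Goodearl–Yakimov picture, check it restricts on the big cell to $\uclu$ and that its grading matches $\Pic(Z^-)$, and then apply Theorem \ref{thm: uncity min mon lifting}.

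Alternatively — and this is the route I expect the paper actually takes, since it says the result ``has essentially been proved in \cite{francone2023minimal}'' — one avoids constructing the cluster structure by hand and instead checks the geometric criterion of Proposition \ref{prop: equality conditions} directly: the inclusion $\lif \uclu \subseteq \Cox(Z^-)$ is an equality precisely when a certain codimension/finite-generation condition relating the boundary divisors $Z^- \setminus Y^-$ to the frozen variables holds. For the complete flag variety the boundary $Z^- \setminus Y^-$ is a union of Schubert divisors indexed by the simple reflections, these are exactly the divisors whose classes generate $\Pic(Z^-) = \ZZ^{\,\mathrm{rk}\,G}$, and they correspond bijectively to the frozen variables adjoined in the lifting; this transversality of "boundary divisors $\leftrightarrow$ generators of $\Pic$ $\leftrightarrow$ frozen variables" is what makes the criterion fire. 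The main obstacle is precisely this last verification: one must match, divisor by divisor, the geometry of the Schubert stratification on $B^- \backslash G$ with the combinatorial data (frozen variables and the $\gvec$-vectors governing the lifting) so that Proposition \ref{prop: equality conditions} applies, and then cite Theorem \ref{thm: intro 2} / Theorem \ref{thm: uncity min mon lifting} to conclude $\lif \uclu = \Cox(Z^-)$ as graded algebras.
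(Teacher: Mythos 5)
There is a genuine gap: the whole content of the theorem is the promotion from the localization statement of Theorem \ref{thm: intro 1} to an equality $\lif \uclu = \Cox(Z^-)$, and neither of your two routes actually supplies it. Your first route is essentially circular. Theorem \ref{thm: intro 2} (i.e.\ Theorem \ref{thm: uncity min mon lifting}) is a uniqueness statement: it only says that \emph{if} a $\Pic(Z^-)$-graded upper cluster structure compatible with $\uclu$ exhausts $\Cox(Z^-)$, then it must be $\lif \uclu$. To use it you must first exhibit such a structure equal to the whole ring, which is exactly what is being proved. The candidate you propose to invoke does not do the job: the Geiss--Leclerc--Schr\"oer structure on the multi-homogeneous coordinate ring of $Z^-$ exists only for simply laced $G$ and, as the paper stresses, is built over a cluster structure on $\Orb_{Y^-}(Y^-)$ \emph{different} from the Goodearl--Yakimov/\cite{geiss2011kac} structure $\uclu$ appearing in the statement, so the compatibility hypotheses of Theorem \ref{thm: uncity min mon lifting} (that the seed $D$-extends $t_\ii$ and its variables restrict to the $x_i$) would not be met; and Goodearl--Yakimov work on the cell $Y^-$, not on $\Cox(Z^-)$.

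Your second route mischaracterizes Proposition \ref{prop: equality conditions}. That criterion is valuation-theoretic: equality holds if and only if, for every boundary index $d$, the cluster valuation $\cval_d$ of the lifted seed is non-negative (equivalently, coincides with the divisorial valuation $\val_d$) on all of $\Cox(Z^-)$. It is not a ``codimension/finite-generation condition'', and the correspondence you describe between boundary Schubert divisors, generators of $\Pic(Z^-)$, and the adjoined frozen variables is automatic from the construction (Lemma \ref{lem:pic(Z) with nice open} plus the definition of $\lif t^D$); it holds verbatim for every partial flag variety, where the paper explicitly says equality is not known, so it cannot be ``what makes the criterion fire''. The paper's actual argument is different and much shorter: since $K=\emptyset$, the characteristic space is $\lX^- = U^-\setminus G$, which by a classical result of Vinberg--Popov sits inside its affinization with complement of codimension at least two, so $\Cox(Z^-)=\Orb_{\lX^-}(\lX^-)$ is the coordinate ring of the affinization; the identity of this ring with $\uclu(\lif t_\ii^J)$ is then exactly \cite[Theorem 8.3.2]{francone2023minimal}, which is cited rather than re-proved. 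You correctly guessed that the substance lives in \cite{francone2023minimal}, but the bridge you build to it (Proposition \ref{prop: equality conditions} via ``transversality'') does not hold up, and the codimension-two input that the paper really uses is absent from your proposal.
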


When $Z^-$ is not the complete flag variety, the picture is much more complicated, and is not studied here.

\bigskip

Assume from now on that the group $G$ is simple and simply laced.
The ring of regular functions on  $Y^-$ admits a second cluster structure considered in \cite{geiss2008partial}. 
This cluster structure is related to the one described in \cite{goodearl2021integral} and \cite{geiss2011kac}, but different. 
In the paper \cite{geiss2008partial}, the authors enhance this cluster structure to produce a cluster algebra inside the multi-homogeneous coordinate ring of $Z^-$.
This ring is canonically identified with $\Cox(Z^-)$.
Theorem \ref{thm:Cox GP geiss e minimal mon lifting} states that the construction of the present paper recovers, in this special setting, the cluster structure obtained by Geiss, Leclerc and Schr\"oer.
Theorem \ref{thm:Cox GP geiss e minimal mon lifting} is a remarkable result, primarily because the methods of this paper are significantly different than the ones of \cite{geiss2008partial}. 
The main difference is that Geiss-Leclerc-Schr\"oer's technique for enhancing the cluster structure on $\Orb_{Y^-}(Y^-)$ to the ring $\Cox(Z^-)$ relies on representation theoretic properties of the preprojective algebra of the Dynkin quiver of $G$, which are closely related to the chosen cluster structure on $\Orb_{Y^-}(Y^-)$.
On the other hand, our construction only relies on how the base cluster structure on $\Orb_{Y^-}(Y^-)$ interplays with the geometry of $Z^-$. 
Geiss-Leclerc-Schr\"oer's construction has the advantage of being highly specific, allowing for fine control over the resulting cluster structure on $\Cox(Z^-)$,
 thanks to its  representation theoretic interpretation.
 On the other hand, the construction presented here has the advantage of being more elementary and much less specific, thus allowing for a broader range of applications. 

 \bigskip
 
Note that Kadhem \cite{kadhem2023cluster} generalized part of Geiss-Leclerc-Schr\"oer's construction to partial flag varieties of arbitrary semisimple simply connected algebraic groups $G$.
The same proof of Theorem \ref{thm:Cox GP geiss e minimal mon lifting} can be used to interpret Kadhem's construction as a special case of the one described in this paper.
 
\bigskip

\textbf{2. The diagonal partial compactification of the finite cluster variety.}
Let $\uclu$ be an upper cluster algebra. 
By a construction of Fock and Goncharov \cite{fock2009cluster}, the algebra $\uclu$ can be interpreted as the ring of regular functions on a scheme $\mathcal{Y}$ called \textit{cluster variety}.
 The scheme $\mathcal{Y}$ is essentially obtained by glueing infinitely many  isomorphic varieties called \textit{cluster tori}.
The \textit{finite cluster variety} is an open subset $Y$ of $\mathcal{Y}$  covered by an appropriate finite set of cluster tori.
We refer to Section \ref{sec: diag comp cluster manifold} for a precise definition of the variety $Y.$
Under mild hypothesis, we have that $\Orb_Y(Y)= \uclu$.
In Section \ref{sec: diag comp cluster manifold}, we define a partial compactfication $Z$ of $Y$, to which we refer as the \textit{diagonal partial compactification} of $Y$.
If the upper cluster algebra $\uclu$ is factorial, we show that we can apply  Theorem \ref{thm: intro 1} to the pair $(Z,Y)$, and we prove the following result. (See Theorem \ref{thm:principal clust variety} for a more precise statement).

\begin{theo}
    \label{intro:Cox 3}
   We have an equality of $\Pic(Z)$-graded algebras $\Cox(Z)=\lif \uclu$.
\end{theo}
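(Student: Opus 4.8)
Proof proposal for Theorem \ref{intro:Cox 3} (equality $\Cox(Z) = \lif\uclu$ for the diagonal partial compactification).

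The plan is to obtain the inclusion from the general machinery and then to promote it to an equality by means of the geometric criterion of Proposition~\ref{prop: equality conditions}, exploiting the very concrete nature of the diagonal partial compactification.

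To set things up, I would first record that, for $\uclu$ factorial, the pair $(Z,Y)$ satisfies the hypotheses of Theorem~\ref{thm: intro 1}: this is exactly where factoriality is used, and it is the content of the relevant part of Section~\ref{sec: diag comp cluster manifold} --- one has $Z$ smooth, $\Orb_Y(Y)=\uclu$, $\Pic(Y)=\{0\}$ (since $\uclu$ is a UFD and $Y$ is smooth), $\Orb_Y(Y)^\times=\CC^\times$, and the mild hypothesis of Theorem~\ref{thm: min mon lifting}. I would also note that the codimension one part of the boundary $Z\setminus Y$ is a union of prime divisors $D_1,\dots,D_r$ which are $\ZZ$-linearly independent in $\Cl(Z)$: a relation $\sum_i a_i D_i=\divv_Z(f)$ forces $f$ to be a nowhere vanishing regular function on $Y$, hence $f\in\Orb_Y(Y)^\times=\CC^\times$, hence $\sum_i a_i D_i=0$. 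Thus $\Pic(Z)=\Cl(Z)=\bigoplus_i\ZZ[D_i]$ is free and finitely generated, $\Cox(Z)$ is defined, and Theorem~\ref{thm: intro 1} produces a $\Pic(Z)$-graded upper cluster subalgebra $\lif\uclu\subseteq\Cox(Z)$ together with a homogeneous $M\in\lif\uclu$ such that $\lif\uclu_M=\Cox(Z)_M$. It remains to upgrade this inclusion to the equality $\lif\uclu=\Cox(Z)$.

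For the upgrade I would appeal to Proposition~\ref{prop: equality conditions}, which translates $\lif\uclu=\Cox(Z)$ into a geometric statement about the boundary $Z\setminus Y$ relative to the base cluster structure. The point is that, by construction (Section~\ref{sec: diag comp cluster manifold}), $Z$ is glued from the partial compactifications of the finitely many cluster tori of $Y$, so every boundary prime divisor of $Z$ is of the kind the criterion asks for --- it is a component of the vanishing locus of a cluster variable of one of the seeds, and its canonical section is visible inside $\lif\uclu$ --- and the charts on which the homogenized cluster and frozen variables are coordinates are large enough in $\Spec\Cox(Z)$ that every homogeneous element of $\Cox(Z)$, being a Laurent polynomial in each of those charts, already lies in the intersection of the corresponding Laurent rings, which by definition is $\lif\uclu$. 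Feeding this into Proposition~\ref{prop: equality conditions} gives $\Cox(Z)=\lif\uclu$ as $\Pic(Z)$-graded algebras, i.e.\ the more precise Theorem~\ref{thm:principal clust variety}.

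The main obstacle is exactly this geometric verification: one has to understand the boundary strata of the diagonal partial compactification, and the way mutation relates the charts in their neighbourhood, well enough to be certain that the minimal monomial lifting already accounts for every global section of every line bundle on $Z$ --- equivalently, that the inclusion $\lif\uclu\subseteq\Cox(Z)$ is not strict --- and this is where the specific shape of the diagonal compactification (as opposed to an arbitrary partial compactification of $Y$) is essential. As a cross-check, and an alternative route that sidesteps the explicit section-chasing, one can argue by uniqueness: $\lif\uclu$ is a $\Pic(Z)$-graded upper cluster algebra contained in $\Cox(Z)$ and compatible with $\uclu$ in the sense of Theorem~\ref{thm: uncity min mon lifting}, so by Theorem~\ref{thm: intro 2} it suffices to exhibit \emph{some} $\Pic(Z)$-graded upper cluster algebra structure on $\Cox(Z)$ compatible with $\uclu$ --- obtained by enlarging the lifted seeds with the canonical sections of the $D_i$ as extra frozen variables --- and Theorem~\ref{thm: intro 2} then forces this structure to coincide with $\lif\uclu$, whence $\Cox(Z)=\lif\uclu$.
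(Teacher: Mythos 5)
Your set-up (checking the hypotheses of Theorem \ref{thm: intro 1} for the pair $(Z,Y)$, freeness of $\Pic(Z)$ on the boundary divisors, and the resulting graded inclusion $\lif\uclu\subseteq\Cox(Z)$) matches the paper, which does this in Lemma \ref{lem:Pic cluster var} and the discussion following it. The genuine gap is in the upgrade to equality. You correctly identify Proposition \ref{prop: equality conditions} as the right criterion, but you do not verify it: the claim that every homogeneous element of $\Cox(Z)$, ``being a Laurent polynomial in each of those charts, already lies in the intersection of the corresponding Laurent rings, which by definition is $\lif\uclu$'' conflates the divisorial valuations $\val_{k'}$ (regularity of $f$ along $p^{-1}(E_{k'})$, which every element of $\Cox(Z)$ satisfies for free) with the cluster valuations $\cval_{k'}$ attached to the highly frozen vertices of $\lif t^D$. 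Proposition \ref{prop: equality conditions} says precisely that the equality $\uclu(\lif t^D)=\Cox(Z)$ is \emph{equivalent} to $\cval_{k'}\geq 0$ (equivalently $\cval_{k'}=\val_{k'}$) on $\Cox(Z)$, so you cannot assume their agreement to conclude; your own text concedes that ``the main obstacle is exactly this geometric verification'' and then leaves it undone. In the paper this verification is the entire content of the proof of Theorem \ref{thm:principal clust variety}: one first computes the minimal lifting matrix $\nu=(\Id,0)$ and the pullbacks of the lifted variables along the trivializations $\phi^{(k)}$ over the charts $Z(k)$ (Lemma \ref{lem:computation nu Cox cluster var}), and then argues by contradiction in three steps, writing a hypothetical $f\in\Cox(Z)$ with $\cval_{k'}(f)<0$ as $P/\lif x_{k'}$, exploiting homogeneity, regularity of $f$ on $p^{-1}(Z(k))$, and the coprimality of $M^+(k)+M^-(k)$ with $x(k)_k$ to force all coefficients of $P$ to vanish. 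None of this appears in your proposal.

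Your suggested alternative via Theorem \ref{thm: intro 2} is circular. Theorem \ref{thm: uncity min mon lifting} is a uniqueness statement: \emph{if} some seed $D$-extending $t$ has upper cluster algebra equal to $\Orb_{\lX}(\lX)=\Cox(Z)$ and is compatible with the grading and with $\iota^*$, \emph{then} it must be $\lif t^D$. To invoke it you must first exhibit a graded upper cluster structure whose upper cluster algebra \emph{is} $\Cox(Z)$; the seed you propose (``enlarging the lifted seeds with the canonical sections of the $D_i$ as extra frozen variables'') is exactly $\lif t^D$ itself, and the assertion that its upper cluster algebra equals $\Cox(Z)$ is the statement to be proved, not an input one can supply independently. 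So neither route in the proposal actually closes the inclusion, and the heart of the paper's argument is missing.
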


The construction of the scheme $Z$ is, to my best knowledge, new. 
In would be interesting to understand if the scheme $Z$ can be extended to a global partial compactification of the whole cluster variety $\mathcal{Y}$.

\bigskip
 
\textbf{Conclusion and further developments.} The Cox ring of an algebraic variety $Z$ is, in general, difficult to describe explicitly. 
Indeed, even the problem of understanding if $\Cox(Z)$ is finitely generated or not may be challenging.
On the other end, upper cluster algebras are very concrete objects whose underling combinatorial and algebraic structure is extremely rich and, in many aspects, well understood. 
Therefore, proving that the Cox ring of $Z$ is an upper cluster algebra, in an explicit way, can be a powerful tool for approaching its study and revealing unforeseen features.
For instance, the theory of quivers with potentials \cite{derksen2008quivers}, \cite{derksen2010quivers} can be used to develop a categorical and representation theoretic approach to the Cox ring.
Furthermore, one can obtain bases of the vector space $\Cox(Z)$, along with combinatorial models for the dimension of the spaces of global sections of line bundles of $Z$, through the well developed theory of good bases \cite{dupont2011generic}, \cite{qintriangular}, \cite{gross2018canonical},  \cite{qin2022bases}.
The strength of this approach can be seen in recent works on rings of semi-invariants of quivers \cite{fei2017cluster}, \cite{fei2017clusters}, and on branching algebras  \cite{magee2020littlewood}, \cite{fei2016tensor}.

Unfortunately, the current literature on cluster algebras does not yet provide the tools necessary to address the problem of the finite generation of the Cox ring.
Indeed, the problem of determining if a given upper cluster algebra is finitely generated, has not sufficiently been studied at the moment. 
It is well known that upper cluster algebras may be infinitely generated, as shown by Speyer \cite{speyer2013infinitely}.
Nevertheless, to my best knowledge, no general criteria for showing finite generation of an upper cluster algebra is known at the required level of generality.
I want to stress that the results of \cite{berenstein2005cluster3} and \cite{Muller2012locally} do not apply to the cluster algebras considered in this paper.
This is because the definition of cluster algebras that we use here allows frozen variables to be not invertible. 
A possible approach for generalizing the ideas of \cite{Muller2012locally} in a generality that covers the cluster algebras considered in this paper is provided by \cite{bucher2019upper}.

The existence of connections between cluster algebras and Cox rings were already present in early works on cluster algebras \cite{fomin2002clusterfoundation}, \cite{scott2006grassmannians}, \cite{geiss2008partial}, and became clear with the works of \cite{gross2013birational} and \cite{mandel2019cluster}. 
Nevertheless, the problem of identifying an explicit cluster structure on the Cox of ring of a given variety has never been studied in the generality of this paper. 
Note that, the problem of identifying an explicit cluster structure on a given ring is extremely difficult.
In general, this problem is highly sensible of the ring we are considering, and there is no general recipe for studying it.  
From this perspective, Theorem \ref{thm: intro 1} provides a precious tool for studying cluster structures on Cox rings.

\bigskip

We hope that the results of this paper could be used to reveal situations where the results of \cite{bossinger2024newton} can be applied.
Moreover, we hope that they could provide methods for studying the Cox rings of the partial compactifications of cluster varieties defined in \cite{gross2018canonical}.

\bigskip

As a final remark, I want to stress that in the present paper we show how to apply the minimal monomial lifting technique \cite{francone2023minimal} not only to Cox rings, but to the more general family of rings of global sections of sheaves of divisorial algebras.

\subsection{Structure of the paper}

Section \ref{sec:preliinaries Cox} starts by recalling the necessaries preliminaries on cluster algebras.
Then, we introduce the fundamental definitions and results on suitable for lifting schemes and minimal monomial lifting contained in \cite{francone2023minimal}.
Finally, we recall some facts on sheaves of line bundles, and their characteristic spaces, following \cite{arzhantsev2015cox}.
In Section \ref{sec:lifting clust structures} we show that the characteristic space of a sheaf of line bundles on a scheme admits many natural suitable for lifting structures, and we discuss how this simple observation allows to apply the minimal monomial lifting technique.
We mostly focus on the case of the Cox sheaf.
Then, we discuss some elementary examples, and we relate our construction to the results of \cite{cox1995homogeneous} on toric varieties.
Section \ref{sec: flag varieties} discusses the application to flag varieties and the relation with the previously discussed construction of \cite{geiss2008partial}.
In Section \ref{sec: diag comp cluster manifold}, we define the diagonal partial compactification of a finite cluster variety, and prove that its Cox ring has a cluster structure obtained by minimal monomial lifting.

\bigskip

In all the paper, we work over the base field of complex numbers. 
This is  mostly an expository choice, motivated by the fact that the results of \cite{francone2023minimal} are stated over $\CC$.
Despite this fact, all the results of this paper and of \cite{francone2023minimal} hold over any algebraically closed field of characteristic zero. 
 If a reader feels that the case of general fields deserves a
 publication, and has the patience to write down all the details, they will receive no contention.

\bigskip

\textbf{General notation.} The non-standard notation which is repeatedly used trough the text is introduced in the environments \textbf{"General notation"}.

\bigskip

\textbf{Warning.} Readers experienced with cluster algebras may skip most of the contents of Section \ref{Sec: cluster algebras reminders}, but they are still encouraged  to have a look at the notation and the definition of cluster and upper algebras.
Indeed, in this paper, we deal with cluster algebras that have both invertible and non-invertible coefficients (frozen variables).
Therefore, our definitions and notation differ slightly from the standard ones, in order to meet the need of varying coefficients.

\section{Preliminaries}
\label{sec:preliinaries Cox}

In all the paper, we work over the base field $\CC$ of complex numbers.
Therefore, notation as $\AAA^n$ and $\PP^n \, (n \in \NN)$ stand respectively for the complex affine space and the complex projective space of dimension $n$.
Similarly, $\GG_m$ is the complex multiplicative group of dimension one.

\subsection{Cluster and upper cluster algebras}
\label{Sec: cluster algebras reminders}

We recall the definition of cluster and upper cluster algebras of geometric type. 
We allow coefficient (frozen) variables to be both invertible and not.
We follow the notation of \cite{francone2023minimal}.

\bigskip

\noindent \textbf{General notation.}
The notation introduced here is mostly used for dealing with the combinatorial properties of cluster algebras.

Let $b \in \RR$.
We define 
$$
b^+:= \max\{b,0\}, \qquad b^-:= \max\{-b,0\}. $$

Let  $J$, $K$ be finite sets and $M \in \RR^{J \times K}$.
We refer to $M$ as a $J$ times $K$ matrix.
We denote by $M^\pm \in \RR_{\geq 0}^{J \times K}$  the matrix obtained by applying component-wise to $M$ the corresponding operation. 
For $k \in K$, the $k$\textit{-th column} of $M$ is denoted by 
$$
M_{\bullet k}:=M_{|J \times \{k\}} \in \RR^J.
$$ 
Let $H$ be an abelian group, $h \in H^J$, $m \in \ZZ^J$ and assume that the matrix $M$ has integer coefficients. 
Then, we denote
$$
h^m:= \sum m_j h_j \quad \text{and} \quad h^M :=(h^{M_\bullet k})_{k \in K} \in (H)^K .
$$
Given a ring $R$, the notation $R^\times$ stands for the set of invertible elements of $R$.
Note that, if $H= R^\times$, then $h^m$ is a (multiplicative) Laurent monomial in the elements $h_j \, (j \in I)$.
Let  $I$ be a third finite set and $N \in \ZZ^{K \times I}$.
Then, note that
$$
h^{M N}=(h^M)^N,
$$ 
where the product $ \ZZ^{J \times K} \times \ZZ^{K \times I} \longto \ZZ^{J \times I }$ corresponds to composition of morphisms in the canonical identification between $\ZZ^{J \times K}$ and $\Hom_\ZZ(\ZZ^K , \ZZ^J).$
The elements of the canonical basis of $\ZZ^K$ are denoted by $e_k \, (k \in K)$.

\subsubsection{Seeds}
The combinatorial information needed to define a cluster algebra is encoded in an object called seed. Let $\KK$ be a field extension of $\CC$.

   \begin{definition}  A \textit{seed} $t$ of $\KK$ is a collection $$(I_{uf}, I_{sf}, I_{hf}, B, x ) $$ consisting of:

   \begin{itemize}
       \item[--] Three disjoint finite sets $I_{uf}, I_{sf}, I_{hf}$. 
       An element of $I:= I_{uf} \sqcup I_{sf} \sqcup I_{hf}$ (resp. $I_f:= I_{hf} \sqcup I_{sf }$) is  a \textit{vertex} (resp. \textit{frozen} vertex) of $t$. 
       A vertex is respectively called \textit{unfrozen} (or \textit{mutable}), \textit{semi-frozen}, \textit{highly frozen} if it belongs to $I_{uf}, I_{sf}, I_{hf}.$ 
       \item[--] An integer matrix $B \in \ZZ^{I \times I_{uf}}$, such that its \textit{principal part} $B^\circ :=B_{| I_{uf} \times I_{uf}}$ is \textit{skew-symmetrizable}.
       That is: there exists $d_i \in \NN_{>0} $ ($i \in I_{uf}$), such that  the condition $d_ib_{ij} = - d_jb_{ji}$ holds for any $i,j \in I_{uf}$.
       We say that $B$ is the \textit{extended exchange matrix} of the seed $t$.
       \item[--] A transcendence basis $x=(x_i)_{i\in I} \in (\KK^\times)^I$ of $\KK$ over $\CC$.
       This is the \textit{extended cluster} of the seed $t$.
       The elements $x_i \, (i \in I)$ are called \textit{cluster variables} of $t$. 
       A variable is said to be \textit{unfrozen} (or \textit{mutable}) (resp. \textit{semi-frozen}, resp. \textit{highly-frozen}, resp. \textit{frozen}) if its corresponding vertex is unfrozen (resp. semi-frozen, resp. highly frozen, resp. frozen).
   \end{itemize}       
   \end{definition}

If a seed is denoted by $t$, we implicitly assume that its defining data are denoted as in the previous definition. 
If a seed is called $t^\bullet$, where $\bullet$ is any superscript, then we add a $\bullet$ superscript  to all the notation. 
For example, we write $t'=(I'_{uf}, I'_{sf}, I'_{hf}, B', x')$.
We use a similar conventions if a seed is denote by $t(\bullet)$, for some symbol $\bullet$.

\bigskip

\noindent \textbf{Graphical notation for seeds.}
The generalized exchange matrix $B$ of a seed $t$ can be equivalently represented using the \textit{valued quiver} $Q$ defined as follows.

\begin{itemize}
    \item [-] The vertices set of $Q$ is $I$. For $i \in I$, the corresponding vertex of $Q$ is pictured by a symbol $\bigcirc$ (resp. $\square$, resp. $\blacksquare$) if $i \in I_{uf}$  (resp. $I_{sf}$, resp.$I_{hf}$), and it is labeled by $i$.
\end{itemize}
Given two vertices $i$ and $j$ of $I$, we adopt the convention that $b_{ij}=0$ if both $i$ and $j$ are frozen, and that $b_{ij}=-b_{ji}$ if exactly one between $i$ and $j$ is frozen.
Note that, in the latter case, exactly one between $b_{ij}$ and $b_{ji}$ is defined as a coefficient of the generalized exchange matrix, according to which vertex is unfrozen. Then, the valued arrows of $Q$ are defined as follows.
\begin{itemize}
    \item[-]  There is an arrow between $i$ and $j$, pointing towards $j$, if and only if $b_{ij} > 0$. In this case, the arrow is labeled by: "$b_{ij}, - b_{ji}$". Moreover, if $b_{ij} = - b_{ji}$, for low values of $b_{ij}$, we may write $b_{ij}$ distinct arrows from $i$ to $j$ instead of a labeled arrow.
\end{itemize}
\begin{example}
    \label{ex label seed}
    Let $t=( \{1,2\}, \{3\}, \{4\}, B, x)$ be a seed whose generalized exchange matrix is $$ B:= \bmat 0 & 3 \\
    -1 & 0 \\
    0 & -2 \\
    0 & 1
    \emat,
    $$
    where the rows and columns of $B$ are labeled in the obvious way.
    The valued quiver representing the matrix $B$ is pictured on the left below.
    On its right, we have a simplified version of it.

    \[\begin{tikzcd}
	&&& {\square 3} &&&&& {\square 3} \\
	{\bigcirc 1} && {\bigcirc 2} &&& {\bigcirc 1} && {\bigcirc 2} \\
	&&& {\blacksquare 4} &&&&& {\blacksquare 4}
	\arrow["{2,2}"{description}, from=2-3, to=1-4]
	\arrow["{3,1}"{description}, from=2-1, to=2-3]
	\arrow["{3,1}"{description}, from=2-6, to=2-8]
	\arrow[shift left, from=2-8, to=1-9]
	\arrow[shift right, from=2-8, to=1-9]
	\arrow["{1,1}"{description}, from=3-4, to=2-3]
	\arrow[from=3-9, to=2-8]
\end{tikzcd}\]
    
\end{example}

\subsubsection{Mutations} Let $t$ be a seed of the field $\KK$ and $k \in I_{uf}$.
The mutation of $t$ at the vertex $k$ is the seed $\mu_k(t):= t'$ of $\KK$, where $t'=(I'_{uf}, I'_{sf}, I'_{hf}, B', x') $ is defined as follows. 
\begin{itemize}
    \item[--] 
    The vertices sets are $ \quad
    I_{uf}':=I_{uf}, \quad  I_{sf}':=I_{sf}, \quad I_{hf}':=I_{hf}.
    $
    \item[--] The extended exchange matrix $B' $ satisfies:

   \begin{equation}
       \label{matrix mutation}
       b'_{ij}:=\begin{aligned}
           \begin{cases} 
           -b_{ij} & \text{if} \quad i=k \quad \text{or} \quad j=k,\\
           b_{ij}+b_{ik}^+b_{kj}^+ - b_{ik}^-b_{kj}^- & \text{otherwise}.
           \end{cases}
       \end{aligned}
      \end{equation}
   \item[--] The extended cluster $x'$ is defined by
   \begin{equation}
       \label{cluster mutation}
      x_i':= \begin{aligned}
           \begin{cases} 
           x_i & \text{if} \quad i\neq k \\
           x \strut^{B_{\bullet k}^+ -e_k} + x \strut^{B_{\bullet k}^- - e_k} & \text{if} \quad i = k.
           \end{cases}
       \end{aligned}
      \end{equation}
\end{itemize}
The matrix $B'$ and the cluster $x'$ are also denoted by $\mu_k(B)$ and $\mu_k(x)$ respectively.
The identity 

\begin{equation}
    \label{exchange relation}
    x_kx_k'= x \strut^{B_{\bullet k}^+} + x \strut^{B_{\bullet k}^-}
\end{equation}
is called \textit{exchange relation}. We often denote the two monomials on the right hand side of the exchange relation by $M^+_k$ and $M_k^-$, in the obvious way.

Given a seed $t^*$ of $\KK$, we say that $t^*$ is \textit{mutation-equivalent} to $t$, and write $t^* \sim t$, if there exists $i_1, \dots , i_l \in I_{uf}$ such that 
$$
\mu_{i_l} \circ \dots \circ \mu_{i_1}(t)=t^*.
$$
The relation $\sim$ is an equivalence relation on the set of seeds of the field $\KK.$ 
The set of seeds mutation-equivalent to $t$ is denoted by $\Delta(t)$, or simply by $\Delta$ if there is no risk of confusion.

\subsubsection{Cluster and upper cluster algebras}
\label{sec: cluster upper cluster}
 For the rest of Section \ref{Sec: cluster algebras reminders}, we fix an equivalence class $\Delta$ of seeds of the field $\KK$.

 Since any seed in $\Delta$ has the same set of frozen variables, say $x_i$ for $i \in I_f$, and mutations preserve the type of frozen variables (semi-frozen or highly frozen),  the \textit{coefficient ring} 
\begin{equation}
 \label{coefficient ring}   
\coef[\Delta]:= \CC [x_i]_{i \in I_{hf}} [x_i^{\pm1}]_{i \in I_{sf}}
\end{equation}
is well defined.
We stress that Formula \eqref{coefficient ring} really depends on the decomposition $I_f=I_{sf} \sqcup I_{hf}$, and not only on the set $I_f$.
\begin{definition}
    \label{cluster algebra}
    The \textit{cluster algebra} $\clu(\Delta)$ is the $\coef[\Delta]$-subalgebra of $\KK$ generated by all the cluster variables of all the seeds in $\Delta$.
   A \textit{cluster variable} of $\clu(\Delta)$ is a cluster variable of some seed in $\Delta.$
\end{definition}

For a seed $t \in \Delta$, we define the ring of \textit{Laurent Polynomials}

\begin{equation}
    \label{Laurent Poly}
    \Li(t):= \coef[\Delta][x_i^{\pm1}]_{i \in I_{uf}}.
\end{equation}
Note that the unfrozen and semi-frozen variables of the seed $t$ are invertible in the ring $\Li(t)$, while the highly frozen ones are not.

\begin{definition}
The \textit{upper cluster algebra} $\uclu(\Delta)$ is  $$ \uclu(\Delta):= \bigcap_{t \in \Delta} \Li(t).$$
\end{definition}
Both rings $\clu(\Delta)$ and $\uclu(\Delta)$ are integral domains.
Moreover, $\uclu(\Delta)$ is normal, as it is defined as the intersection of normal rings.
\begin{remark}
\begin{enumerate}
    \item Let $t$ be a seed of $\Delta$.
    We also use the notation $\clu(t)$ (resp. $\uclu(t)$) to denote the algebra $\clu(\Delta)$ (resp. $\uclu(\Delta)$).
    If we use the latter notation, we consider the seed $t$ as the \textit{initial seed} of the cluster algebra $\clu(t)$. 
    Moreover, we refer to the cluster variables of $t$ as the \textit{initial} cluster variables.  
    \item  The choice of the ambient field $\KK$, as the choice of the cluster variables, is immaterial for the isomorphism class of a cluster or upper cluster algebra.
Indeed, these properties only depend on the generalized exchange matrix and on the set of vertices of an initial seed.
However, the choices of $\KK$ and of the cluster variables become important when we try to identify a given ring with an explicit cluster or upper cluster algebra.
\item The reader can find some examples of cluster and upper cluster algebras in Section \ref{sec: some examples}.
\end{enumerate}    
\end{remark}

We end this section by recalling some fundamental results on cluster and upper cluster algebras.

\begin{theo}[Laurent phenomenon]
\label{laurent pheno}
Let $t, t' \in \Delta$ and $i \in I$.
We have that 
$$
x_i' \in \ZZ[x_j]_{j \in I_f}[x_k^{\pm1}]_{k \in I_{uf}}.
$$
In particular, the cluster algebra $\clu(\Delta)$ is contained in the upper cluster algebra $ \uclu(\Delta)$.
\end{theo}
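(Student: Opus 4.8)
The plan is to reduce the statement to the assertion that, for every seed $t'\in\Delta$ and every $i\in I$, the element $x_i'$, expressed through the extended cluster $x$ of the fixed seed $t$, belongs to the ring
$$
R_t:=\ZZ[x_j]_{j\in I_f}[x_k^{\pm1}]_{k\in I_{uf}}\subseteq\KK.
$$
Since $R_t\subseteq\Li(t)$ and $\coef[\Delta]\subseteq\Li(t)$, this immediately gives $\clu(\Delta)\subseteq\Li(t)$ for every $t\in\Delta$, hence $\clu(\Delta)\subseteq\bigcap_{t\in\Delta}\Li(t)=\uclu(\Delta)$. Because $x=(x_i)_{i\in I}$ is a transcendence basis, $R_t$ is literally a localization of the polynomial ring $\ZZ[x_i]_{i\in I}$, hence a unique factorization domain whose units are the signed monomials in the $x_k$, $k\in I_{uf}$; this is the ambient ring in which all the cancellation arguments will take place.

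I would then follow the classical strategy of Fomin--Zelevinsky and Berenstein--Fomin--Zelevinsky \cite{fomin2002clusterfoundation, berenstein2005cluster3}, by induction on the length $\ell$ of a mutation path $t=t_0\to t_1\to\cdots\to t_\ell=t'$ with $t_m=\mu_{k_m}(t_{m-1})$. The cases $\ell=0$ and $\ell=1$ are handled directly by the exchange relation \eqref{exchange relation}: the vectors $B_{\bullet k}^+$ and $B_{\bullet k}^-$ have non-negative, disjoint supports, so $M_k^+$ and $M_k^-$ are coprime monomials of $\ZZ[x_i]_{i\in I}$, neither divisible by $x_k$ (as $b_{kk}=0$ by skew-symmetrizability of $B^\circ$); hence $x_k'=(M_k^++M_k^-)/x_k\in R_t$, while the remaining cluster variables of $t_1$ equal those of $t$. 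For the inductive step the key device is the caterpillar lemma of \cite{fomin2002clusterfoundation}: Laurent-ness of $x_i'$ in $x$ can be deduced from Laurent-ness along the sub-configurations of the mutation path that use at most two distinct mutable directions, which reduces the problem to an explicit rank $\le 2$ computation. Concretely, one checks that for any two mutable vertices $k,l$ a short alternating sequence of mutations $\mu_k,\mu_l,\mu_k,\dots$ produces cluster variables that are Laurent polynomials over $\ZZ[x_j]_{j\in I_f}$ in the two mutated variables, and — crucially for the induction to close — that consecutive such cluster variables are pairwise coprime in $R_t$ and are not divisible by any $x_j$, $j\in I_f$.

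The main obstacle, and the heart of the argument, is precisely this coprimality bookkeeping in the rank $\le 2$ step: one substitutes the already-established Laurent expression of an intermediate cluster variable into the exchange relation defining the next one and must verify that the apparent denominator cancels, the cancellation being forced by unique factorization in $R_t$ together with the coprimality of the two exchange monomials. The single point that is genuinely new relative to the coefficient-free statement — that every frozen variable, whether highly frozen or semi-frozen, occurs with non-negative exponent — turns out to come essentially for free: each exchange relation \eqref{exchange relation} divides only by the mutable variable $x_k$, so no frozen $x_j$ ever enters a denominator at any stage of the recursion, and the decomposition $I_f=I_{sf}\sqcup I_{hf}$ plays no role in these computations. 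Carrying out the rank $\le 2$ calculations and the coprimality induction in detail completes the proof, and the inclusion $\clu(\Delta)\subseteq\uclu(\Delta)$ follows as explained above.
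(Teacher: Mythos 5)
The paper does not actually prove this statement: its ``proof'' is a citation to Fomin--Zelevinsky (\cite[Proposition 11.2]{fomin2002cluster}, see also \cite[Theorem 3.1]{fomin2002clusterfoundation}), and what you have written is a correct outline of exactly that classical argument, so in substance you are following the same route the paper relies on. Your treatment of the one genuinely new point for this formulation --- that frozen variables, whether semi-frozen or highly frozen, never acquire negative exponents --- is right for the right reason: the exchange relation \eqref{exchange relation} only ever divides by a mutable variable, and the caterpillar-lemma machinery of \cite{fomin2002cluster} is set up over an arbitrary UFD ground ring, which here you may take to be $\ZZ[x_j]_{j\in I_f}$, so the whole gcd/coprimality bookkeeping takes place in your ring $R_t$ and yields polynomiality in the frozens with no extra work; in particular you correctly observe that the splitting $I_f=I_{sf}\sqcup I_{hf}$ is irrelevant to the Laurent statement and only enters when one forms $\coef[\Delta]$ and $\Li(t)$. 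The only caveat is that your write-up defers the technical heart --- the caterpillar lemma itself and the rank~$\le 2$ coprimality verifications --- to ``carrying out the details''; since these are precisely the published results the paper cites, this is acceptable as long as you either cite them explicitly or genuinely reproduce those computations, but as it stands your text is a sketch resting on the same external input as the paper rather than a self-contained proof.
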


The Laurent Phenomenon is proved in \cite[Proposition 11.2]{fomin2002cluster}. (See also  \cite[Theorem 3.1]{fomin2002clusterfoundation}).

\begin{definition}
    \label{upper bound}
Let $t \in \Delta$. For $i \in I_{uf}$, let $t_i:= \mu_i(t)$ and $t_0:= t$. 
The \textit{upper bound} $\upp(t)$ of the seed $t$ is the algebra 
$$
\upp(t):= \bigcap_{i \in I_{uf} \cup \{0\} }\Li(t_i) .
$$
\end{definition}
 
If the matrix $B$ of a seed $t$ is of maximal rank, we say that the seed $t$ is of \textit{maximal rank}.
The property of being of maximal rank is invariant under mutation, by \cite[Lemma 3.2]{berenstein2005cluster3}. 

\begin{theo}
    \label{upper cluster equals upper bound}
    If the seed $t \in \Delta$ is of maximal rank, then $\uclu(\Delta)=\upp(t)$.
\end{theo}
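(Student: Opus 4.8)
The plan is to prove the two inclusions separately; only $\upp(t)\subseteq\uclu(\Delta)$ uses the maximal rank hypothesis.

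The inclusion $\uclu(\Delta)\subseteq\upp(t)$ is immediate: the seeds appearing in the intersection defining $\upp(t)$ — namely $t=t_0$ together with its neighbours $t_i=\mu_i(t)$, $i\in I_{uf}$ — all lie in $\Delta$, so $\upp(t)$ is an intersection of rings $\Li(s)$ over a subfamily of $\Delta$ and therefore contains $\uclu(\Delta)=\bigcap_{s\in\Delta}\Li(s)$. For the reverse inclusion I would first isolate the following claim: \emph{if $s\in\Delta$ has maximal rank, then $\upp(s)=\upp(\mu_k(s))$ for every $k\in I_{uf}$.} Granting the claim, since maximal rank is preserved by mutation (as recalled just before the statement, by \cite[Lemma 3.2]{berenstein2005cluster3}), any $t'\in\Delta$ is joined to $t$ by a chain of mutations passing only through maximal-rank seeds, along which $\upp$ is constant; hence $\upp(t)=\upp(t')\subseteq\Li((t')_0)=\Li(t')$ for every $t'\in\Delta$, and intersecting over $t'$ yields $\upp(t)\subseteq\uclu(\Delta)$.

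To prove the claim, the first ingredient is the description of the two-neighbour intersection
$$
\Li(t)\cap\Li(\mu_k t)=\coef[\Delta]\big[x_j^{\pm1}\big]_{j\in I_{uf}\setminus\{k\}}\big[x_k,x_k'\big],\qquad x_k':=\mu_k(x)_k,
$$
where $x_k,x_k'$ are subject only to $x_kx_k'=M_k^++M_k^-$. The inclusion $\supseteq$ is a direct check using that the exchange monomials $M_k^\pm=x^{B_{\bullet k}^\pm}$ do not involve $x_k$ (as $b_{kk}=0$), so $M_k^\pm$, $x_k$ and $x_k'$ all lie in both rings. The inclusion $\subseteq$ is a divisibility argument in the domain $\coef[\Delta][x_j^{\pm1}]_{j\in I_{uf}\setminus\{k\}}$: expanding an element of the intersection as a Laurent polynomial in $x_k$ and, separately, in $x_k'$, and matching coefficients via $x_k'=(M_k^++M_k^-)x_k^{-1}$, forces each negative-degree coefficient in $x_k$ to be divisible by the corresponding power of $M_k^++M_k^-$. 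I would also record here that $M_k^+$ and $M_k^-$ are coprime, since $B_{\bullet k}^+$ and $B_{\bullet k}^-$ have disjoint support; this is automatic and is used below.

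Writing $R:=\Li(t)\cap\Li(\mu_k t)$ and using $(\mu_k t)_0=\mu_k t$, $(\mu_k t)_k=t$, $(\mu_k t)_j=\mu_j\mu_k t$ for $j\neq k$, both upper bounds factor through $R$:
$$
\upp(t)=R\cap\bigcap_{j\in I_{uf}\setminus\{k\}}\Li(\mu_j t),\qquad
\upp(\mu_k t)=R\cap\bigcap_{j\in I_{uf}\setminus\{k\}}\Li(\mu_j\mu_k t),
$$
so the claim reduces to the equality of these two truncated intersections — the step I expect to be the main obstacle. The subtlety is that for $j\neq k$ the $j$-th cluster variable of $t$ and of $\mu_k t$ coincide, but their mutations at $j$ differ, since the relevant exchange monomials are computed from $B$ versus $\mu_k(B)$, which differ in column $j$ whenever $b_{kj}\neq0$; a priori this makes membership in $\Li(\mu_j t)$ and in $\Li(\mu_j\mu_k t)$ different constraints on an element of $R$. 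I would resolve this by following the argument of \cite[\S4]{berenstein2005cluster3}: one characterizes membership of an element of $R$ in $\bigcap_{j\neq k}\Li(\mu_j t)$ by divisibility conditions relating its potential poles along the $x_j$ to the appropriate exchange monomials, and then uses maximal rank — that is, linear independence of the columns $B_{\bullet j}$ — to show that these conditions are insensitive to whether $B$ or $\mu_k(B)$ is used, so the same subset of $R$ is cut out both ways. Transplanting \cite[\S4]{berenstein2005cluster3} to the present setting, where frozen variables may be non-invertible, requires only checking that all the divisibility arguments take place over the normal Noetherian domain $\coef[\Delta]=\CC[x_i]_{i\in I_{hf}}[x_i^{\pm1}]_{i\in I_{sf}}$, which has the unique factorization property used there, with the disjoint-support coprimality of $M_k^+,M_k^-$ supplying the coprimality hypothesis of \cite{berenstein2005cluster3}. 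Combining the claim with the chain argument above completes the proof.
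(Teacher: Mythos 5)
The first thing to note is that the paper does not prove Theorem \ref{upper cluster equals upper bound} at all: it quotes it as a very special case of \cite[Theorem 3.11]{gekhtman2018drinfeld}, observing that the case with no highly frozen vertices is \cite[Corollary 1.7]{berenstein2005cluster3}. Your proposal instead sketches the Berenstein--Fomin--Zelevinsky route, and its skeleton is sound: the easy inclusion $\uclu(\Delta)\subseteq\upp(t)$, the two-neighbour intersection lemma $\Li(t)\cap\Li(\mu_k t)=\coef[\Delta][x_j^{\pm1}]_{j\in I_{uf}\setminus\{k\}}[x_k,x_k']$, and the reduction of everything to mutation-invariance of the upper bound along a chain of maximal-rank seeds are all correctly set up.

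The genuine gap sits in the one step you yourself flag as the main obstacle, and it is not just a deferral to the literature: you misidentify the hypothesis under which the invariance argument of \cite{berenstein2005cluster3} runs. What that argument needs is that the seeds $t$ and $\mu_k(t)$ be \emph{coprime}, meaning that the exchange polynomials $M_i^{+}+M_i^{-}$ and $M_j^{+}+M_j^{-}$ attached to \emph{distinct} mutable indices are pairwise coprime; this is where the Section 4 lemmas of \cite{berenstein2005cluster3} (which combine the rings $\coef[\Delta][x_j,x_j'][x_l^{\pm1}]_{l\neq j}$ for two different indices $j$) actually use an assumption, and it is exactly what maximal rank supplies: linear independence of the columns $B_{\bullet i}$ and $B_{\bullet j}$ forbids proportionality of the exponent vectors, which is what rules out a common factor of the two binomials, and full rank is preserved by mutation so the hypothesis propagates along your chain. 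The coprimality of $M_k^{+}$ and $M_k^{-}$ (disjoint support), which you record and then claim ``supplies the coprimality hypothesis of \cite{berenstein2005cluster3}'', is automatic for every seed; if that were the relevant hypothesis, the theorem would hold with no rank assumption, which is not what is true of the BFZ argument. So as written you never derive from maximal rank the statement the proof actually needs, and your substitute formulation (``the divisibility conditions are insensitive to whether $B$ or $\mu_k(B)$ is used'') is precisely the assertion that remains unproved. A secondary point: \cite[Corollary 1.7]{berenstein2005cluster3} is proved with all frozen variables invertible, whereas here the highly frozen ones are not --- this is why the paper cites \cite{gekhtman2018drinfeld} instead --- so the adaptation you dismiss in one sentence (rerunning the coprimality and divisibility arguments over $\coef[\Delta]$, where monomials in highly frozen variables are no longer units) is genuine content that would have to be written out, not merely checked to take place in a UFD.
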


Theorem \ref{upper cluster equals upper bound} is a very special case of \cite[Theorem 3.11]{gekhtman2018drinfeld}. 
The case with no highly frozen vertices had already been proved in \cite[Corollary 1.7]{berenstein2005cluster3}.

\begin{theo}
    \label{thm: inv upper}
    Assume that the seed $t \in \Delta$ is of maximal rank and let $i \in I_{uf} \cup I_{hf}$. 
    The cluster variable $x_i$ is an irreducible element of the ring $\uclu(\Delta)$. 
   Moreover, the invertible elements of the ring $\uclu(\Delta)$, up to complex scalars, are the monomials in the semi-frozen variables of any seed of $\Delta.$
\end{theo}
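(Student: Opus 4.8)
The plan is to reduce everything to the case of localized Laurent polynomial rings, where irreducibility and the structure of units are transparent. First I would fix a seed $t\in\Delta$ of maximal rank and use Theorem \ref{upper cluster equals upper bound} to write $\uclu(\Delta)=\upp(t)=\Li(t)\cap\bigcap_{i\in I_{uf}}\Li(\mu_i(t))$, and in particular $\uclu(\Delta)\subseteq\Li(t)=\coef[\Delta][x_j^{\pm1}]_{j\in I_{uf}}$. The ring $\Li(t)$ is a Laurent polynomial ring in the variables $x_j\,(j\in I_{uf}\cup I_{sf})$ over the polynomial ring $\CC[x_i]_{i\in I_{hf}}$, hence a UFD whose units are exactly the elements $c\cdot x^m$ with $c\in\CC^\times$ and $m$ supported on $I_{uf}\cup I_{sf}$. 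This already pins down the units of $\uclu(\Delta)$ from above: any unit of $\uclu(\Delta)$ is a unit of $\Li(t)$, so it is a scalar times a Laurent monomial in the $x_j\,(j\in I_{uf}\cup I_{sf})$; to force the exponents of the $x_j$ with $j\in I_{uf}$ to vanish one repeats the argument with a different seed in which those variables become non-invertible — concretely, apply the same reasoning in $\Li(\mu_j(t))$, where $x_j$ is replaced by $x_j'$ and is no longer a monomial factor, so a unit of $\uclu(\Delta)$ cannot involve $x_j$ to a nonzero power. (One checks $x_j$ does not divide $x_j'$ in $\Li(\mu_j(t))$ using the exchange relation \eqref{exchange relation} and that $x_j'$, being a sum of two coprime monomials, is not divisible by any $x_k$.) The reverse inclusion is immediate: each semi-frozen $x_i$ is invertible already in $\coef[\Delta]\subseteq\uclu(\Delta)$, so every monomial in the semi-frozen variables is a unit.

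For the irreducibility statement, fix $i\in I_{uf}\cup I_{hf}$ and suppose $x_i=fg$ with $f,g\in\uclu(\Delta)$. Passing to the UFD $\Li(t)$, the element $x_i$ is a prime element there (it is one of the free variables of a Laurent-polynomial ring, for $i\in I_{hf}$, or, for $i\in I_{uf}$, one of the invertible generators — but in that case we instead pass to $\Li(\mu_i(t))$ where $x_i$ reappears as the genuine variable coming from the exchange relation, again prime). So, up to a unit of $\Li(t)$, one of $f,g$ — say $g$ — is a unit of $\Li(t)$, i.e. $g=c\,x^m$ with $m$ supported on $I_{uf}\cup I_{sf}$. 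It remains to upgrade this to: $g$ is a unit of $\uclu(\Delta)$. By the units computation just done, it suffices to show that $g$ cannot have a nonzero exponent on any unfrozen variable $x_k$. If it did, then looking in $\Li(\mu_k(t))$ the element $g$ would have negative valuation along the prime $x_k'$ while $x_i$ (for $i\neq k$) and $f$ both lie in $\uclu(\Delta)\subseteq\Li(\mu_k(t))$ and hence have nonnegative such valuations — a contradiction, since valuations are additive and $x_i=fg$. (If $i=k$ this is where we instead use that $x_k'$, not $x_k$, is the relevant prime.) Hence $g$ is a monomial in semi-frozen variables only, i.e. a unit of $\uclu(\Delta)$, and $x_i$ is irreducible.

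The main obstacle, and the step that needs care, is the bookkeeping for an unfrozen vertex $i$: the variable $x_i$ is a \emph{unit} in $\Li(t)$, so $\Li(t)$ alone sees no obstruction to factoring it, and one genuinely must bring in a second chart $\Li(\mu_i(t))$ in which $x_i$ is recovered, via the exchange relation $x_ix_i'=M_i^++M_i^-$, as a bona fide non-unit prime. Showing $x_i$ and $x_i'$ are prime (equivalently irreducible, since these are UFDs) in the respective Laurent rings — and that the two monomials $M_i^\pm$ are coprime so that $x_i'$ shares no factor with any $x_j$ — is the technical heart; everything else is the additive/valuation-theoretic argument above, run simultaneously over the finitely many charts $\Li(\mu_k(t))$, $k\in I_{uf}$, that cut out $\upp(t)=\uclu(\Delta)$. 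I would also take care to invoke that maximal rank is mutation-invariant (so Theorem \ref{upper cluster equals upper bound} applies at every $\mu_k(t)$ if needed) and that $\uclu(\Delta)$ is a domain, so the valuations in play are well defined.
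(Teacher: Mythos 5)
There is a genuine gap, concentrated in the irreducibility half of your argument for a mutable index $i\in I_{uf}$. Your claim that in $\Li(\mu_i(t))$ the element $x_i$ "reappears as the genuine variable..., again prime" is false: the variable of $\mu_i(t)$ at the vertex $i$ is $x_i'$, which is \emph{inverted} in $\Li(\mu_i(t))$, while $x_i=(M_i^++M_i^-)(x_i')^{-1}$ is a unit times the exchange binomial, and that binomial need not be prime. For instance, take $I_{uf}=\{1\}$, $I_{hf}=\{2,3\}$ and $B=(0,2,-2)^{\mathrm T}$ (maximal rank): then $x_1x_1'=x_2^2+x_3^2=(x_2+\sqrt{-1}\,x_3)(x_2-\sqrt{-1}\,x_3)$, so $x_1$ factors nontrivially in $\Li(\mu_1(t))$ even though it is irreducible in $\uclu(\Delta)$. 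The same confusion (identifying $x_k'$ with the binomial $M_k^++M_k^-$) undermines the "upgrade" step: you argue that $f$ and $x_i$ have nonnegative valuation along $x_k'$ \emph{because} they lie in $\Li(\mu_k(t))$, but $x_k'$ is a unit of $\Li(\mu_k(t))$, so membership there imposes no such bound — indeed $x_k\in\uclu(\Delta)$ itself has valuation $-1$ along $x_k'$. The correct obstruction is not the hyperplane $x_k'=0$ but the exchange binomial: since $M_k^+$ and $M_k^-$ are monomials with disjoint supports, $M_k^++M_k^-$ is coprime in $\Li(\mu_k(t))$ to every cluster variable, and (this is exactly where maximal rank must enter, to guarantee $B_{\bullet k}\neq 0$, hence $M_k^+\neq M_k^-$) it is not a unit; so a Laurent monomial $c\,x^m$ with $m_k\neq 0$ cannot have both itself and its cofactor in $\Li(\mu_k(t))$, because one of them would be $(M_k^++M_k^-)^{-|m_k|}$ times a unit (or a unit times $x_i$), contradicting unique factorization. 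Note that without the nonzero-column consequence of maximal rank the theorem itself fails ($B_{\bullet k}=0$ gives $x_kx_k'=2$, making $x_k$ invertible), and your stated justification "sum of two coprime monomials" does not exclude this case; by contrast, your only explicit use of maximal rank, to invoke Theorem \ref{upper cluster equals upper bound}, is unnecessary, since $\uclu(\Delta)\subseteq\Li(t)\cap\bigcap_{k}\Li(\mu_k(t))$ holds by the very definition of the upper cluster algebra.

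The repair is close to what you already set up, and is essentially the argument behind the reference the paper cites (the proof of Theorem 1.3 of Geiss--Leclerc--Schr\"oer's factoriality paper): for $i\in I_{uf}$, observe that $x_i$ is a unit of $\Li(t)$, hence in any factorization $x_i=fg$ \emph{both} $f$ and $g$ are units of $\Li(t)$, i.e.\ scalars times Laurent monomials in $x_j$, $j\in I_{uf}\sqcup I_{sf}$, with exponent vectors summing to $e_i$; then run, for each mutable $j$, the coprimality/non-unit argument with $M_j^++M_j^-$ in $\Li(\mu_j(t))$ (as in the units computation, done carefully) to kill all mutable exponents except a single $+1$ at $i$ in one factor, forcing the other factor to be a monomial in semi-frozen variables, i.e.\ a unit of $\uclu(\Delta)$. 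For $i\in I_{hf}$ the primality of $x_i$ in $\Li(t)$ is fine, but the cofactor argument must again use coprimality of $x_i$ with $M_k^++M_k^-$ (disjoint supports of $B^+_{\bullet k},B^-_{\bullet k}$) rather than a valuation along $x_k'$.
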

Theorem \ref{thm: inv upper} is essentially due to \cite{geiss2013factorial}.
Indeed, the proof of \cite[Theorem 1.3]{geiss2013factorial} also implies Theorem \ref{thm: inv upper}. 
See also  \cite[Theorem 2.35]{cao2022valuation}.

\subsubsection{Cluster valuations induced by frozen variables}
We denote  by $\CC(\Delta)$ the fraction field of $\uclu(\Delta)$.
Let $t\in \Delta$, and denote by $\CC[t]$ (resp. $\CC(t)$) the polynomial ring (resp. the field of fractions) in the cluster variables of the seed $t$.
By Theorem \ref{laurent pheno}, we have that $\CC[t]$ is contained in $\uclu(\Delta)$.
Moreover,  we have that $\CC(t)= \CC(\Delta).$


To any frozen vertex $i \in I_f$, we can associate the discrete divisorial valuation $\val_i^t$ corresponding to the vanishing locus of the frozen variable $x_i$ in 
the affine space $\Spec\bigl( \CC[t]\bigr).$ 
In other words,

$$
\val_i^t : \CC(t) \longto \ZZ \cup \{ \infty \}
$$
is the only discrete valuation that associates to a polynomial $f= \sum_{n \in \NN^I} a_n x^n$ $(a_n \in \CC)$ the integer 
$$
\val_i^t(f):= \min \{ n_i \, : \, n \in \NN^I \, \, \text{and} \, \, a_n \neq 0\}.
$$

By identifying the fields $\CC(t)$ and $\CC(\Delta)$ as described above, the valuation $\val_i^t$ induces a discrete valuation on the field $\CC(\Delta)$.

By \cite[Lemma 2.3.1]{francone2023minimal}, the resulting valuation on $\CC(\Delta)$ does not depend on the choice of the seed $t$.  
We call it the \textit{cluster valuation} associated to the frozen vertex $i$, and denote it by 
$$
\cval_i: \CC(\Delta) \longto \ZZ \cup \{ \infty \}.
$$

\subsubsection{Graded seeds}
We recall the notion of \textit{graded seed} and graded cluster algebra, formalized by \cite{grabowski2015graded}.

\begin{definition}
Let $t$  be a seed of $\Delta$ and $H$ be an abelian group. An element $\sigma \in H^I$ is an $H$-\textit{degree configuration}, on the seed $t$, if the condition
\begin{equation}
\label{graduation condition}
    \sigma^{B^+}= \sigma^{B^-}
\end{equation}
is fulfilled.
A seed with an $H$-degree configuration is called an $H$-\textit{graded seed}.
\end{definition}
  Condition \eqref{graduation condition} is equivalent to the fact that, for any $k \in I_{uf}$, then 
$$
\sum_{i \in I} b_{ik}^+ \sigma_i = \sum_{i \in I} b_{ik}^- \sigma_i.
$$



Assume that $t \in \Delta$ is an $H$-graded seed, and denote by $\sigma$ its degree configuration.
Let $k \in I_{uf}$ and $t':=\mu_k(t)$. 
Moreover, let  $\sigma'$ be the $H$-degree configuration on the seed $t'$ defined by the following formula:

\begin{equation}
  \label{mutation graduation}  
  \sigma'_i =\begin{aligned}
           \begin{cases} 
           \sigma_i & \text{if} \quad i \neq k \\
         \sigma^{B_{\bullet k}^+} - \sigma_k & \text{if} \quad i=k.
          \end{cases}
       \end{aligned}    
\end{equation}

We say that $\sigma'$ is the \textit{mutation at $k$} of the degree configuration $\sigma$, and we denote it by $\mu_k(\sigma)$.
If $k_1, \dots , k_m$ are mutable vertices of $t$, for some $m \in \NN$, the degree configuration $\mu_{k_1} \circ \cdots \circ \mu_{k_m}(\sigma)$ on the seed $\mu_{k_1} \circ \cdots \circ \mu_{k_m}(t)$ depends only on the seed $\mu_{k_1} \circ \cdots \circ \mu_{k_m}(t)$, and not on the mutation sequence.

Therefore, any seed $t^* \in \Delta$ is in a canonical way a graded seed.
We denote its degree configuration by $\sigma^*$.
Note that the algebra $\Li(t^*)$ is canonically $H$-graded by  setting $\deg(x^*_i):= \sigma^*_i$.
The set of Laurent monomials (in the variables of $x^*$) that belong to $\Li(t^*)$ forms an homogeneous base of  $\Li(t^*)$ as a $\CC$-vector space.
Moreover, we have that the algebras $\clu(\Delta)$ and $\uclu(\Delta)$ are graded subalgebras of $\Li(t^*)$, and the induced graduations on  $\clu(\Delta)$ and  $\uclu(\Delta)$ do not depend on the choice of the seed $t^*$, but only on the initial degree configuration $\sigma.$

\bigskip

    To sum up, the cluster and  upper cluster algebra of a graded seed are canonically graded algebras. Moreover, any cluster variable is an homogeneous element with respect to the corresponding graduation.

\subsection{Suitable for lifting schemes and minimal monomial lifting}
\label{sec: suitable lift schemes}

In this section we recall the fundamental definitions and results of \cite{francone2023minimal}.

\bigskip

\textbf{General notation.} If $H$ is a complex algebraic group, we denote by $X(H)$ its group of characters.
Assume that $H$ is a torus and a base $(\lambda_d)_{d \in D}$ of the free abelian group $X(H)$ has been fixed, for some finite set $D$.
Then, we identify the groups $\ZZ^D$ and $X(H)$ by means of the map assigning to an element $n \in \ZZ^D$ the character 
\begin{equation}
\label{eq: identification characters lattice}
    \varpi_n:= \sum_{d \in D} n_d \lambda_d .
\end{equation}
The torus $\GG_m^D$ is always considered with its base of characters corresponding to the natural coordinates of $\GG_m^D$.

\bigskip

Let $D$ be a finite set, $T$ be torus of dimension $|D|$ and $X(T)$ be its character group.
Let $\lX$ be a normal, noetherian, integral $\CC$-scheme acted on by $T$, and $X=(X_d)_{d \in D}$ be a collection of regular functions on $\lX$.
Let $\CC(\lX)$ be the field of rational functions on $\lX$.
We assume that, for any $d \in D$, the zero locus of $X_d$ in $\lX$ is irreducible, and we denote by $\val_d: \CC(\lX) \longto \ZZ \cup \{\infty\}$ its corresponding discrete divisorial valuation. 
Namely, the valuation $\val_d$ associates to a rational function on $\lX$ its order of zero or pole along  the vanishing locus of the function $X_d.$
Moreover, let $\phi : T \times Y \longto \lX$ be a $T$-equivariant open embedding. 
Here, $Y$ is some integral $\CC$-scheme, and $T$ acts on $T \times Y$ by multiplication on the $T$-component of $T \times Y.$  

\begin{definition}
\label{def: suitable lifting}
    The triple $(\lX, \phi, X)$ is a \textit{homogeneously suitable for $D$-lifting scheme} if the following conditions are satisfied.
    \begin{enumerate}
        \item For any $d_1,d_2 \in D$, then $\val_{d_1}(X_{d_2})= \delta_{d_1,d_2}.$
        \item We have that $\phi^*(X_d)=x_d \otimes 1$ $(d \in D)$ for some base $(x_d)_{d \in D}$ of the free abelian group $X(T)$. 
        \item The irreducible components of maximal dimension of $\lX \setminus \phi(T \times Y)$ are precisely the vanishing loci of the functions $X_d$, for $d \in D.$
    \end{enumerate}
\end{definition}

We assume from now on that the triple $(\lX, \phi, X)$ is a homogeneously suitable for $D$-lifting scheme. 
We fix below some notation and conventions that are freely used through the text when a homogeneously suitable for lifting scheme is fixed.

\bigskip

\noindent \textbf{General notation.}
Note that a base $(x_d)_{d \in D}$ of $X(T)$ as in Condition 2 of Definition \ref{def: suitable lifting} is uniquely determined by the data of $\phi$ and $X$. 
Therefore, the identification between $\ZZ^D$ and $X(T)$ given by formula \eqref{eq: identification characters lattice}, according to the base  $(x_d)_{d \in D}$, is freely used.

Moreover, we identify $T \times Y$ with its image in $\lX$ through the map $\phi$.
This allows to think about $T \times Y$ as an open subset of $\lX$, and write for example that $\CC(\lX)= \CC(T \times Y)$ and that $X_d= x_d \otimes 1$. 
In particular, for any $f \in \CC(Y)$, the notation $1 \otimes f$ identifies a well defined rational function on $\lX$.
Finally, we denote by $\iota: Y \longto \lX$ the map sending a point $y$ of $Y$ to $\iota(y):= (e, y) \in \lX$.

\bigskip

The algebra $\Orb_\lX(\lX)$ is $X(T)$-graded by means of the $T$-action on $\lX$. Namely, we have a decomposition
$$
\Orb_\lX(\lX) =\bigoplus_{\lambda \in X(T)} \Orb_\lX(\lX)_\lambda,
$$
where $\Orb_\lX(\lX)_\lambda \, (\lambda \in X(T))$ denotes the space of semi-invariant functions on $\lX$ of weight $\lambda.$ 
In other words, $\Orb_\lX(\lX)_\lambda$ is the vector space consisting of functions $f$ on $\lX$ that satisfy
$$
f(t \cdot x)= \lambda(t)f(x) \qquad (t \in T, \, x \in \lX).
$$

\begin{lemma}
    \label{lem: pole filtration}
    Let $n \in \ZZ^D$. 
    The restriction of $\iota^*$ to $\Orb_\lX(\lX)_{\varpi_n}$ is injective and its image is
    $$
    \Orb_Y(Y)_n:= \{ f \in \Orb(Y) \, : \, \val_d(1 \otimes f) \geq -n_d \quad \text{for any} \quad d \in D \}.
    $$
    The map $\Orb_Y(Y)_n \longto \Orb_\lX(\lX)_{\varpi_n}$ sending an element $f \in \Orb_Y(Y)_n $ to $X^n(1 \otimes f)$ is the inverse of the restriction of $\iota^*.$
\end{lemma}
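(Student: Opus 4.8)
The plan is to unpack the $T$-equivariance of $\phi$ together with conditions (1) and (2) of Definition \ref{def: suitable lifting}, and to work on the open dense subset $T \times Y \subset \lX$ where everything is explicit. First I would observe that since $\lX$ is normal and noetherian, a function $f \in \Orb_\lX(\lX)$ is determined by its restriction to the open dense $T \times Y$, and conversely a rational function on $\lX$ is regular precisely when $\val_d(f) \geq 0$ for every $d \in D$ together with regularity on $T \times Y$ itself (here one uses that the $X_d$ cut out the maximal-dimensional components of the complement by condition (3), so the only codimension-one phenomena outside $T \times Y$ come from the divisors $\{X_d = 0\}$; Serre's criterion $S_2$ plus normality then lets us extend across lower-dimensional loci). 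On $T \times Y$ the coordinate ring is $\CC[X(T)] \otimes \Orb_Y(Y) = \CC[x_d^{\pm 1}]_{d \in D} \otimes \Orb_Y(Y)$, and the $T$-action is by translation on the first factor, so the weight-$\varpi_n$ part of $\CC(T \times Y)$ is exactly $X^n \otimes \CC(Y) = x^n \cdot (1 \otimes \CC(Y))$. This already gives that $\iota^*$ is injective on $\Orb_\lX(\lX)_{\varpi_n}$: a semi-invariant of weight $\varpi_n$ restricted to $T \times Y$ is $X^n (1 \otimes g)$ for a unique $g \in \CC(Y)$, and $\iota^*$ recovers $g$ (since $\iota(y) = (e,y)$ and $X^n(e) = 1$), so $f \mapsto g$ is injective and $\iota^*$ differs from it only by the bijection $g \mapsto X^n(1\otimes g)$.

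Next I would identify the image. Given $f \in \Orb_\lX(\lX)_{\varpi_n}$, write $f|_{T\times Y} = X^n(1 \otimes g)$ with $g \in \CC(Y)$; I need to show $g \in \Orb_Y(Y)$ and that the divisorial constraints $\val_d(1 \otimes g) \geq -n_d$ hold, and conversely that these conditions force $X^n(1\otimes g)$ to extend to a global regular semi-invariant on $\lX$. For the forward direction: $g$ is regular on $Y$ because $1 \otimes g = X^{-n} f$ is regular on $T \times Y$ (as $f$ is regular there and the $X_d$ are units on $T \times Y$), and pulling back along $\iota$ (or restricting to $\{e\} \times Y$) shows $g \in \Orb_Y(Y)$. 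The valuation inequality comes from computing $\val_d$ of $f = X^n (1 \otimes g)$: since $f$ is regular on all of $\lX$ we have $\val_d(f) \geq 0$, and by additivity of valuations and condition (1) ($\val_d(X^n) = n_d$) we get $\val_d(1 \otimes g) = \val_d(f) - n_d \geq -n_d$. For the converse: if $g \in \Orb_Y(Y)$ with $\val_d(1 \otimes g) \geq -n_d$ for all $d$, then $X^n(1 \otimes g)$ is a rational function on $\lX$, regular on $T \times Y$, with $\val_d\bigl(X^n(1\otimes g)\bigr) = n_d + \val_d(1 \otimes g) \geq 0$ for each $d$; by the normality/codimension argument above it extends to a regular function on $\lX$, and it is manifestly a semi-invariant of weight $\varpi_n$ since $X^n$ is (by condition (2), $X_d = x_d \otimes 1$ has weight $x_d$) and $1 \otimes g$ is $T$-invariant. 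Finally the formula for the inverse of $\iota^*$ is immediate from the bookkeeping: $\iota^*\bigl(X^n(1\otimes g)\bigr) = g$ because $X^n$ evaluates to $1$ on $\{e\}\times Y$.

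The main obstacle I expect is the extension step — arguing cleanly that a rational function on $\lX$ which is regular on $T \times Y$ and has nonnegative order along each divisor $\{X_d = 0\}$ is in fact globally regular. This is where condition (3) of Definition \ref{def: suitable lifting} is essential: it guarantees that $\lX \setminus (T\times Y)$ has no other codimension-one components besides the $\{X_d = 0\}$, so the divisor of poles of our candidate function is supported on $(T \times Y) \cup \bigcup_d \{X_d=0\}$ and has no component there, hence is empty on a set of codimension one; normality (which gives the $S_2$ property) then upgrades "regular in codimension one" to "regular everywhere". I would expect this to be either a direct citation from \cite{francone2023minimal} or a short lemma proved just before; in any case it is the only non-formal input, the rest being a transparent translation between the torus-translation picture on $T \times Y$ and divisorial valuations.
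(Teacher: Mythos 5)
Your proof is correct and is exactly the ``elementary direct proof'' that the paper merely asserts exists (its own proof consists of that assertion plus a citation of \cite{francone2023minimal}), so there is nothing to compare beyond noting that you have supplied the omitted argument. In particular you correctly isolate the only non-formal step — extending $X^n(1\otimes f)$ from $T\times Y$ to $\lX$ via normality (algebraic Hartogs in codimension $\geq 2$) together with Condition 3 of Definition \ref{def: suitable lifting}, which guarantees that the only codimension-one components of $\lX \setminus \phi(T\times Y)$ are the loci $\{X_d=0\}$, along which Condition 1 gives $\val_d\bigl(X^n(1\otimes f)\bigr)=n_d+\val_d(1\otimes f)\geq 0$ — and the rest of your bookkeeping with the weight decomposition on $T\times Y$ is accurate.
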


\begin{proof}
   The direct proof of the statement is elementary. 
   The result is also an immediate consequence of \cite[Corollary 4.0.7, Proposition 3.1.6, Corollary 3.1.7]{francone2023minimal}.
\end{proof}
The vector spaces $\Orb_Y(Y)_n$ $(n \in \ZZ^D)$ are the components of a filtration on $\Orb_Y(Y)$.
The following corollary is an easy consequence of Lemma \ref{lem: pole filtration}.
\begin{coro}
        \label{cor:cox, to compare Leclerc}
    Let $f \in \Orb_Y(Y) \setminus \{0\}$. There exists a unique $n \in \ZZ^D$ and $\widetilde f \in \Orb_\lX(\lX)_{\varpi_n}$ with the following properties.
    \begin{enumerate}
        \item $\iota^*(\widetilde f)=f$.
        \item If $m \in \ZZ^D$ and there exists $\overline{f} \in \Orb_\lX(\lX)_{\varpi_m}$ such that $\iota^*(\overline{f})=f$, then $n \leq m$ (componentwise) and $\overline{f}= X^{(m-n)} \widetilde f $.
    \end{enumerate}
    Moreover, we have that 
    $$
    \widetilde f= X^n (1 \otimes f) \quad \text{and}  \quad n= \big( -\val_d(1 \otimes f)\big)_{d \in D}.
    $$
\end{coro}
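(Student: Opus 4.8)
The plan is to derive everything from Lemma~\ref{lem: pole filtration}, which already identifies the graded piece $\Orb_\lX(\lX)_{\varpi_m}$ with the subspace $\Orb_Y(Y)_m\subseteq\Orb_Y(Y)$ via the inverse maps $\iota^*$ and $f\mapsto X^m(1\otimes f)$. First I would fix $f\in\Orb_Y(Y)\setminus\{0\}$ and set $n:=\big(-\val_d(1\otimes f)\big)_{d\in D}\in\ZZ^D$; this is well defined because $f\neq 0$ forces $\val_d(1\otimes f)\in\ZZ$ (not $\infty$) for each $d$. Then I would define $\widetilde f:=X^n(1\otimes f)$ and check that $\widetilde f\in\Orb_\lX(\lX)_{\varpi_n}$: by the very choice of $n$ we have $\val_d(1\otimes f)\geq -n_d$ for all $d$, so $f\in\Orb_Y(Y)_n$, and Lemma~\ref{lem: pole filtration} then gives both that $X^n(1\otimes f)$ is a regular semi-invariant function of weight $\varpi_n$ and that $\iota^*(\widetilde f)=f$, establishing property~(1).

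Next I would prove the minimality statement~(2). Suppose $m\in\ZZ^D$ and $\overline f\in\Orb_\lX(\lX)_{\varpi_m}$ with $\iota^*(\overline f)=f$. By Lemma~\ref{lem: pole filtration} applied to the weight $\varpi_m$, the function $f$ lies in $\Orb_Y(Y)_m$, i.e.\ $\val_d(1\otimes f)\geq -m_d$ for every $d\in D$, which rewrites exactly as $n_d\leq m_d$ for all $d$; hence $n\leq m$ componentwise. Moreover, since $\iota^*$ is injective on $\Orb_\lX(\lX)_{\varpi_m}$ and Lemma~\ref{lem: pole filtration} identifies the inverse, we get $\overline f=X^m(1\otimes f)=X^{m-n}\cdot X^n(1\otimes f)=X^{(m-n)}\widetilde f$, using that $m-n\in\NN^D$ so that $X^{m-n}$ is a genuine regular function on $\lX$ (a monomial in the $X_d$, not a Laurent monomial). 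This gives property~(2).

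Finally, uniqueness: if $(n',\widetilde f')$ is another pair with properties~(1) and~(2), then applying~(2) for the pair $(n,\widetilde f)$ against $\overline f=\widetilde f'\in\Orb_\lX(\lX)_{\varpi_{n'}}$ yields $n\leq n'$, and symmetrically $n'\leq n$, so $n=n'$; then $\widetilde f=X^{0}\widetilde f'=\widetilde f'$. The displayed formulas $\widetilde f=X^n(1\otimes f)$ and $n=\big(-\val_d(1\otimes f)\big)_{d\in D}$ are then exactly the definitions used in the construction. I do not anticipate a genuine obstacle here; the only point requiring a little care is making sure that $m-n\in\NN^D$ so that $X^{m-n}\widetilde f$ makes sense as written and the manipulation $\overline f=X^{m-n}\widetilde f$ stays inside $\Orb_\lX(\lX)$ — but this is immediate from $n\leq m$ — and that the $\val_d$ never takes the value $\infty$ on $1\otimes f$, which is guaranteed by $f\neq 0$ together with Condition~(1) of Definition~\ref{def: suitable lifting} ensuring the $X_d$ are honest nonzero functions whose vanishing loci are prime divisors.
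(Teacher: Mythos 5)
Your proof is correct and follows essentially the same route as the paper: define $n$ by the valuations of $1\otimes f$, use Lemma~\ref{lem: pole filtration} both to produce $\widetilde f=X^n(1\otimes f)$ and to force $\overline f=X^m(1\otimes f)$, then deduce $n\leq m$. The only cosmetic difference is that you obtain $n\leq m$ from the lemma's description of the image of $\iota^*$ (i.e.\ $f\in\Orb_Y(Y)_m$), while the paper computes $0\leq\val_d(\overline f)=m_d-n_d$ directly from Condition~1 of Definition~\ref{def: suitable lifting}; these are the same argument in different clothing, and your explicit uniqueness check is a welcome addition the paper leaves implicit.
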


\begin{proof}
    Let $n':= \big( -\val_d(1 \otimes f)\big)_{d \in D}.$ 
    Clearly, we have that $f \in \Orb_Y(Y)_{n'}$. 
    Then, Lemma \ref{lem: pole filtration} implies that
    $$
    X^{n'} (1 \otimes f) \in \Orb_\lX(\lX)_{\varpi_{n'}}.
    $$
   Let  $m \in \ZZ^D$ and $\overline{f} \in \Orb_\lX(\lX)_{\varpi_m}$ satisfying $\iota^* \big( \overline{f} \big)= f$. Then, Lemma \ref{lem: pole filtration} implies that 
    $$
    \overline{f}= X^m (1 \otimes f).
    $$
    Moreover, using the first condition of Definition \ref{def: suitable lifting} and the fact that $\overline{f}$ is regular on $\lX$, we deduce that 
    $$
    0 \leq \val_d \big( \overline{f} \big)= m_d -n_d' \qquad (d \in D).
    $$
    Therefore, $m$ is componentwise greater or equal then $n'$, and the statement follows.
\end{proof}

Assume from now that $t$ is a seed  such that $\Orb_Y(Y)= \uclu(t).$
As usual, we use the notation $t=(I_{uf}, I_{sf}, I_{hf}, B, x).$
\begin{definition}
\label{def: minimal lift matrix}
    The  matrix $\nu \in \ZZ^{D \times I}$ defined by 
    $
\nu_{d,i}:= -\val_d( 1 \otimes x_i) \, (d \in D, \, i \in I)
    $
    is called the \textit{minimal lifting matrix} of the seed $t$ with respect to $(\lX, \phi, X).$
\end{definition}

Starting from the minimal lifting matrix $\nu$, we can construct the seed    
$$
\lif t =( \lif I_{uf}, \lif I_{sf}, \lif I_{uf}, \lif B , \lif x), 
$$
of the field of fractions of $\Orb_\lX(\lX)$ (which is a subfield of $\CC(\lX)$), defined by the following data.

    \begin{itemize}
        \item[--] The sets of vertices  
        $$
        \lif I_{uf}:=I_{uf}, \quad \lif I_{sf}:=I_{sf}\sqcup D, \quad  \lif I_{hf}:= I_{hf}. 
        $$
        \item[--] The matrix $\lif B$ satisfying 
        $$
        \lif B_{|I \times I_{uf}}= B, \qquad \lif B_{|D \times I_{uf}}= -\nu B.
        $$
        For simplicity, we use the notation
        \begin{equation}
            \label{formula lifting B}
            \lif B= \begin{pmatrix}
            B \\
            -\nu B
        \end{pmatrix} \in \ZZ^{(I \sqcup D) \times I_{uf}}.
        \end{equation}
         The coefficients of $\lif B$ are denoted by $\lif b_{ij}$  $(i \in \, \lif I, j \in \, \lif I_{uf})$.
        
\item[--] The cluster $\lif x$ defined by
\begin{equation}
  \label{formula lifting variables}  
 \lif x_j= \begin{cases}
     \widetilde x_j & \text{if} \quad j \in I, \\
     X_j & \text{if} \quad j \in D,
 \end{cases}
 \end{equation}
 where, for any cluster variable $x_j$, the element $\widetilde x_j$ is defined by Corollary \ref{cor:cox, to compare Leclerc}.
\end{itemize}

\begin{remark}
    Thinking about $\nu$ (resp. $B$) as a matrix whose rows and columns are indexed by $D$ and $I$ (resp. $I$ and $I_{uf}$) respectively, it makes sense  to multiply $\nu$ with $B$.
We think about this product of matrices as an element $\nu B \in \ZZ^{D \times I_{uf}}$.
\end{remark}

\begin{definition}
    \label{def: minimal mon lifting}
   The \textit{minimal monomial lifting} of $t$ with respect to $(\lX, \phi, X)$ is the seed $\lif t^D$, of the field of fractions of $\Orb_\lX(\lX)$, obtained from $\lif t$ by switching the status of the vertices indexed by $D$ from semi-frozen to highly frozen. 
\end{definition}

Practically, the difference between the seeds $\lif t$ and $\lif t^D$ is given by the fact that the frozen cluster variables $\lif x_d$ ($d \in D)$ are non-invertible elements of the upper cluster algebra $\uclu(\lif t^D)$, while they are invertible in $\uclu(\lif t)$. 
Moreover, by \cite[Lemma 2.4.2]{francone2023minimal}, we have that 
\begin{equation}
\label{eq: localising clust min lifting}
   \uclu(\lif t^D)_{ \prod_{d \in D} \lif \spc x_d}= \uclu(\lif t).
\end{equation}

The seeds $\lif t$ and $\lif t^D$ are canonically $\ZZ^D$-graded by means of the degree configuration $\lif 0$ defined by

$$
\lif 0_j= \begin{cases}
    \nu_{\bullet j} & \text{if} \quad j \in I,\\
    e_j & \text{if} \quad j \in D,
\end{cases}
$$
where the $e_d$ ($d \in D$) are the canonical basis elements of $\ZZ^D$. 
Recalling the identification between $\ZZ^D$ and $X(T)$ induced by the Map \eqref{eq: identification characters lattice}, the previous degree configuration induces a $X(T)$-graduation on both $\uclu(\lif t^D)$ and $\uclu(\lif t).$

\bigskip

Recall that two regular functions $f,g $ on a noetherian scheme $Z$ are \textit{coprime}  if they don't simultaneously vanish on a codimension one closed subset of $Z$. 
The following statement is part of \cite[Theorem 4.0.3, Lemma 4.1.7]{francone2023minimal}.

\begin{theo}
\label{thm: min mon lifting}
Assume that the following conditions hols.
        \begin{enumerate}
            \item The seed $t$ is of maximal rank.
            \item For any non-equal $i,j \in I_{uf}$ the cluster variables $x_i$ and $x_j$ are coprime on $Y$.
            \item For any $k \in I_{uf}$,  the cluster variables $x_k$ and $\mu_k(x)_k$ are coprime on $Y$.
        \end{enumerate}
        Then, we have an inclusion $\uclu(\lif t^D) \subseteq \Orb_\lX(\lX)$ and an equality $\uclu(\lif t)= \Orb_\lX(T \times Y)$ of $X(T)$-graded algebras.
\end{theo}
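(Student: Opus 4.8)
The plan is to prove the two assertions separately, deriving the second from the first together with the localization identity \eqref{eq: localising clust min lifting}. For the inclusion $\uclu(\lif t^D) \subseteq \Orb_\lX(\lX)$, I would first invoke hypothesis (1) and Theorem \ref{upper cluster equals upper bound}: since $t$ is of maximal rank, so is $\lif t^D$ (the matrix $\lif B$ contains $B$ as a submatrix, so $\rank \lif B \geq \rank B = |I_{uf}|$, hence equality), and therefore $\uclu(\lif t^D)= \upp(\lif t^D)= \Li(\lif t^D) \cap \bigcap_{k \in I_{uf}} \Li(\mu_k(\lif t^D))$. So it suffices to show that each of these finitely many Laurent polynomial rings, intersected appropriately, lands inside $\Orb_\lX(\lX)$. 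The key observation is that $\Orb_\lX(\lX)$ can itself be described as an intersection of localizations inside $\CC(\lX)$: by normality of $\lX$ and the fact (Condition 3 of Definition \ref{def: suitable lifting}) that $\lX \setminus (T\times Y)$ has as top-dimensional components exactly the divisors $\{X_d = 0\}$, one has $\Orb_\lX(\lX) = \Orb_{T\times Y}(T\times Y) \cap \bigcap_{d \in D} \{ h \in \CC(\lX) : \val_d(h) \geq 0 \}$. Now $\Orb_{T\times Y}(T\times Y) = \Orb_Y(Y)[x_d^{\pm 1}]_{d\in D} = \uclu(t)[x_d^{\pm 1}]_{d \in D}$, and one checks directly that this equals $\uclu(\lif t)$ using the definition of $\lif t$ — this is precisely the content of the \emph{second} claimed equality, so I would actually prove that equality first and then obtain the inclusion for $\lif t^D$ by imposing the extra valuation conditions $\val_d \geq 0$, which is exactly what passing from $\lif t$ to $\lif t^D$ does at the level of the $d$-th cluster variables.

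For the equality $\uclu(\lif t) = \Orb_\lX(T \times Y)$, the concrete task is to identify $\uclu(\lif t)$, computed as an intersection of Laurent rings indexed by seeds mutation-equivalent to $\lif t$, with $\uclu(t)[x_d^{\pm 1}]_{d \in D}$. I would argue that mutating $\lif t$ at a vertex $k \in I_{uf}$ "commutes" with mutating $t$ at $k$ in the sense that $\mu_k(\lif t)$ is again of the form $\widetilde{\mu_k(t)}$ with respect to a suitably updated lifting matrix — this is where the specific formula $\lif B = \begin{pmatrix} B \\ -\nu B\end{pmatrix}$ and the mutation rule \eqref{matrix mutation} interact: the block $-\nu B$ is engineered exactly so that the $D$-rows transform correctly and so that the $d$-th cluster variables $X_d$ are never exchanged (they are frozen) and transform by a monomial change that matches the minimal-lifting prescription of Corollary \ref{cor:cox, to compare Leclerc}. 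Granting this compatibility, the seeds mutation-equivalent to $\lif t$ are in bijection with those mutation-equivalent to $t$, and $\Li(\lif t^*) = \Li(t^*)[x_d^{\pm 1}]_{d\in D}$ under the identification $\lif x_j = \widetilde x_j$, $\lif x_d = X_d$. Intersecting over all such seeds and using that localization at the $x_d$ commutes with the intersection defining $\uclu(t)$, we get $\uclu(\lif t) = \uclu(t)[x_d^{\pm 1}]_{d\in D} = \Orb_Y(Y)[x_d^{\pm1}]_{d\in D} = \Orb_{T\times Y}(T\times Y) = \Orb_\lX(T\times Y)$.

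The role of hypotheses (2) and (3) — coprimality of distinct unfrozen cluster variables, and of $x_k$ with $\mu_k(x)_k$ — is subtler: they are needed to control the \emph{minimal lifting matrix} $\nu$ under mutation, ensuring that the liftings $\widetilde{x_i}$ behave well, i.e. that $\val_d(1 \otimes (\text{exchanged variable}))$ is computed by the naive tropical formula rather than acquiring extra vanishing from a shared component of the zero loci. Concretely, when one mutates the exchange relation $x_k x_k' = M_k^+ + M_k^-$ and tensors with $1$, coprimality guarantees that no cancellation of orders of vanishing occurs along $\{X_d = 0\}$, so that $\val_d$ is additive/min-compatible in the way required for $\mu_k(\nu)$ to again be the minimal lifting matrix of $\mu_k(t)$. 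I expect \textbf{this step — proving that $\mu_k(\lif t)$ is the minimal monomial lifting of $\mu_k(t)$, i.e. the stability of the construction under mutation, which genuinely uses conditions (2) and (3)} — to be the main obstacle; once it is in hand, the two displayed statements follow by the intersection-of-localizations bookkeeping sketched above. Since the full argument for this stability is carried out in \cite{francone2023minimal} (Theorem 4.0.3, Lemma 4.1.7), in the present exposition I would cite it and only spell out the extraction of the two stated consequences.
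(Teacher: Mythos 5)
The paper offers no proof of this statement at all: it is imported verbatim, with the sentence ``The following statement is part of \cite[Theorem 4.0.3, Lemma 4.1.7]{francone2023minimal}'' serving as the entire justification, so there is no internal argument to compare yours against. Your sketch is consistent with how the cited proof proceeds (maximal rank plus the upper-bound theorem, the $X(T)$-homogeneity of the exchange binomials forced by the rows $-\nu B$, the description of $\Orb_\lX(\lX)$ via normality and the valuations $\val_d$), it correctly isolates the compatibility of the minimal lifting with mutation --- where hypotheses (2) and (3) enter --- as the crux, and since you, like the paper, ultimately delegate that step to the same reference, your treatment matches the paper's.
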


Assumptions 2 and 3 of Theorem \ref{thm: min mon lifting} hold for example if $Y$ is affine and $\Orb_Y(Y)$ is factorial, because of \cite[Lemma 4.0.9]{francone2023minimal}.

Assume from now on that the hypothesis of Theorem \ref{thm: min mon lifting} are satisfied. 
Note that the seeds $t$ and $\lif t^D$ have the same set of mutable vertices.
Moreover, by \cite[Corollary 3.0.14]{francone2023minimal}, the map 
$$
\begin{array}{rcll}
     \Delta(t) & \longto & \Delta(\lif t^D) &   \\
    \mu_{i_1} \circ \cdots \circ \mu_{i_m}(t) & \longmapsto &  \mu_{i_1} \circ \cdots \circ \mu_{i_m}(\lif t^D) & (i_1, \dots , i_m \in I_{uf}, \, m \in \NN )
\end{array}
$$
is a well defined bijection. 
Furthermore, the restriction map $\iota^*$ sends the cluster variables of $\uclu(\lif t^D)$ (excluding the frozen variables $\lif x_d$, for $d \in D$) to cluster variables of $\uclu(t)$, and commutes with mutation. In formulae, we have that. 
\begin{equation}
    \label{eq: restriction clust variables}
    \iota^* \big(  \mu_{i_1} \circ \cdots \circ \mu_{i_m}(\lif x)_i \bigr)=  
      \mu_{i_1} \circ \cdots \circ \mu_{i_m}( x)_i \qquad (i_1, \dots , i_m \in I_{uf}, \, m \in \NN , \, i \in I).
\end{equation}
Note also that any cluster variable of $\uclu(\lif t^D)$ is a $X(T)$-homogeneous element of $\Orb(\lX)$.
The degree of the cluster variables can be computed by iteratively applying Formulae \eqref{mutation graduation}, starting from the initial degree configuration $\lif 0$ of the seed $\lif t^D.$

\bigskip

We say that a seed $\bar t= (\bar I_{uf}, \bar I_{sf}, \bar I_{hf}, \bar B, \bar x)$ of the fraction field of $\Orb_\lX(\lX)$ \textit{$D$-extends} $t$ if the following conditions are satisfied:
$$
\begin{array}{lll}
   I_{uf}= \bar I_{uf},  &  I_{sf} \subseteq \bar I_{sf}, &  I_{hf} \subseteq \bar I_{hf} \\[0.5 em]
   \bar I \setminus I= D,  & \bar B_{| I \times I_{uf}}=B, & \bar x_d = X_d \quad (d \in D). 
\end{array}
$$
The following theorem strongly motivates the interest in the seeds obtained by minimal monomial lifting (\cite[Theorem 4.1.9]{francone2023minimal}).

\begin{theo}
    \label{thm: uncity min mon lifting}
    Let $\bar t$ be a seed of the fraction field of $\Orb_\lX(\lX)$ that $D$-extends $t$, and assume that the following hold.
\begin{enumerate}
    \item $\uclu\bigl(\bar t\bigr)= \Orb_\lX(\lX)$.
    \item For any $i \in I$, the cluster variable $\bar x_i$ is a $X(T)$-homogeneous element of $\Orb_\lX(\lX)$ and $\iota^*(\bar x_i)=x_i.$
    \item  For any $k \in I_{uf}$, the cluster variable $\mu_k(\bar x)_k$ is a $X(T)$-homogeneous element of $\Orb_\lX(\lX)$ and $\iota^*\bigl( \mu_k(\bar x)_k \bigr)=\mu_k(x)_k.$
\end{enumerate}
Then $\bar t= \lif t^D$.
\end{theo}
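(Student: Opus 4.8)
I would aim to show that any $D$-extending seed $\bar t$ satisfying the three hypotheses is forced to coincide with $\lif t^D$ by uniquely reconstructing, from the given data, the extended exchange matrix $\bar B$ and the extended cluster $\bar x$. The status of vertices is already pinned: we are told $I_{uf} = \bar I_{uf}$, and since $\bar I \setminus I = D$, the only freedom in $\bar I_{sf}, \bar I_{hf}$ is where the new vertices $D$ go; I would argue they must be highly frozen (as in $\lif t^D$) by contrasting the invertibility consequences of $\uclu(\bar t) = \Orb_\lX(\lX)$ with Equation \eqref{eq: localising clust min lifting} and Theorem \ref{thm: inv upper}. Concretely, if some $d \in D$ were semi-frozen, then $\bar x_d = X_d$ would be invertible in $\uclu(\bar t) = \Orb_\lX(\lX)$; but $X_d$ vanishes on an irreducible divisor of $\lX$ (Condition 1 of Definition \ref{def: suitable lifting}), contradicting invertibility in the normal ring $\Orb_\lX(\lX)$. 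Hence all of $D$ is highly frozen and $\bar I_{sf} = I_{sf} \sqcup (\text{nothing new}) = \lif I_{sf}^D$, matching $\lif t^D$.

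The core is then two identifications. First, the matrix: $\bar B_{|I \times I_{uf}} = B = \lif B_{|I \times I_{uf}}$ by the $D$-extension hypothesis, so I only need $\bar B_{|D \times I_{uf}} = -\nu B$. I would obtain this from hypotheses (2) and (3) together with the graded structure. Since $\bar x_i$ is $X(T)$-homogeneous with $\iota^*(\bar x_i) = x_i$, Corollary \ref{cor:cox, to compare Leclerc} forces $\bar x_i = X^{m_i}(1 \otimes x_i)$ for some $m_i \geq \nu_{\bullet i}$ componentwise, where $\nu_{\bullet i} = (-\val_d(1\otimes x_i))_d$. For $i$ mutable I want to pin $m_i = \nu_{\bullet i}$, i.e.\ $\bar x_i = \widetilde x_i$; this should follow because the degree of a mutable cluster variable in a graded upper cluster algebra is determined by the initial degree configuration via Formula \eqref{mutation graduation}, and hypothesis (3) about $\mu_k(\bar x)_k$ forces the exchange relation at $k$ to be the homogeneous one, which via the grading condition \eqref{graduation condition} pins down the $D$-rows of $\bar B$ in the column $k$ as exactly $-\nu B_{\bullet k}$. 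Running this over all mutable $k$ gives $\bar B_{|D \times I_{uf}} = -\nu B = \lif B_{|D \times I_{uf}}$, hence $\bar B = \lif B$.

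Second, the cluster: I would finish by showing $\bar x_j = \lif x_j$ for all $j \in \bar I$. For $j \in D$ this is immediate from the $D$-extension hypothesis ($\bar x_d = X_d = \lif x_d$). For $j \in I_{uf}$ I get it from the previous paragraph ($\bar x_j = \widetilde x_j$). For $j \in I_f = I_{sf} \sqcup I_{hf}$, hypothesis (2) gives $\iota^*(\bar x_j) = x_j$ and $X(T)$-homogeneity, and I would again invoke Corollary \ref{cor:cox, to compare Leclerc}: $\bar x_j$ must equal $X^{m_j}(1\otimes x_j)$ for $m_j \geq \nu_{\bullet j}$, and minimality of $m_j = \nu_{\bullet j}$ should come from the fact that $\bar x_j$ is a genuine cluster variable of $\uclu(\bar t) = \Orb_\lX(\lX)$ together with Theorem \ref{thm: inv upper} (irreducibility of frozen cluster variables for maximal-rank seeds) — a strictly larger $m_j$ would make $\bar x_j$ divisible by some $X_d$, contradicting irreducibility, since $X_d$ is not a unit. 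With $\bar B = \lif B$ and $\bar x = \lif x$ and matching vertex sets, $\bar t = \lif t^D$.

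**Main obstacle.** I expect the delicate point to be the pinning of the $D$-rows of $\bar B$, equivalently the minimality $m_i = \nu_{\bullet i}$ for mutable $i$: hypotheses (2) and (3) only directly constrain $\bar x_i$ and the single mutated variable $\mu_k(\bar x)_k$, and I must leverage that a graded seed's combinatorics propagate the grading condition \eqref{graduation condition} to relate the unknown rows of $\bar B$ to the valuations $\nu$. The cleanest route is probably: homogeneity of both $\bar x_k$ and $\mu_k(\bar x)_k$ plus the exchange relation \eqref{exchange relation} force the degree identity $\deg \bar x_k + \deg \mu_k(\bar x)_k = \deg \bar x^{\bar B_{\bullet k}^+} = \deg \bar x^{\bar B_{\bullet k}^-}$, and comparing the $D$-components of these degrees — which are governed by the $\iota^*$-compatibility and Lemma \ref{lem: pole filtration} on the $Y$-side via $\iota^*(\mu_k(\bar x)_k) = \mu_k(x)_k$ — forces $\bar B_{|D \times \{k\}} = -\nu B_{\bullet k}$. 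Care is needed that this argument does not secretly assume the thing to be proved; I would structure it as a downward induction or a direct degree computation using only the known $I$-block $\bar B_{|I\times I_{uf}} = B$ and the valuations of $x_k, \mu_k(x)_k$ on $Y$.
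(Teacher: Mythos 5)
The paper itself gives no proof of this statement (it is quoted from \cite[Theorem 4.1.9]{francone2023minimal}), so I can only judge your argument on its own merits. Its overall shape is right: the vertices of $D$ must be highly frozen, since a semi-frozen $d$ would make $X_d$ invertible in $\uclu(\bar t)=\Orb_\lX(\lX)$, contradicting $\val_d(X_d)=1$; the cluster variables must be the minimal lifts $\widetilde x_i$; and once that is known, the homogeneity of $\bar x_k\,\mu_k(\bar x)_k$ forces the two exchange monomials $\bar x^{\bar B^+_{\bullet k}}$ and $\bar x^{\bar B^-_{\bullet k}}$ to have equal $X(T)$-degree, which (using $\deg X_d=\varpi_{e_d}$, $\deg\widetilde x_i=\varpi_{\nu_{\bullet i}}$ and $\bar B_{|I\times I_{uf}}=B$) yields exactly $\bar B_{|D\times\{k\}}=-(\nu B)_{\bullet k}$; note also that $\bar t$ has maximal rank since $\bar B$ contains the full-rank block $B$, so Theorem \ref{thm: inv upper} is available for $\bar t$, and that with this ordering the condition $\iota^*(\mu_k(\bar x)_k)=\mu_k(x)_k$ is in fact automatic from (2) and the $D$-extension data — only the homogeneity in hypothesis (3) carries weight.

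The genuine gap is in how you pin $\deg\bar x_i=\varpi_{\nu_{\bullet i}}$ for \emph{mutable} $i$: the route through Formulae \eqref{mutation graduation} and \eqref{graduation condition} is circular, because the grading formalism does not determine the degrees of the initial cluster variables (any configuration satisfying \eqref{graduation condition} is admissible), and the degree identity coming from the exchange relation only ties the unknown row $\bar B_{|D\times\{k\}}$ to the unknown degrees $m_i$; it cannot by itself force $m_i=\nu_{\bullet i}$, which is exactly the obstacle you flag but do not resolve. The repair is the argument you reserve for frozen vertices: Theorem \ref{thm: inv upper} gives irreducibility for all $i\in I_{uf}\cup I_{hf}$, mutable vertices included, so write $\bar x_i=X^{m_i-\nu_{\bullet i}}\widetilde x_i$ via Corollary \ref{cor:cox, to compare Leclerc} and argue there. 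Two further touch-ups are needed in that step. First, divisibility of $\bar x_i$ by the non-unit $X_d$ does not by itself contradict irreducibility; you must exclude that $\bar x_i$ is an associate of $X_d$, which follows by applying $\iota^*$: the cofactor being a unit of $\Orb_\lX(\lX)$ would make $x_i=\iota^*(\bar x_i)$ a unit of $\uclu(t)$, contradicting the irreducibility of $x_i$ in $\uclu(t)$. Second, for $i\in I_{sf}$ irreducibility is simply false (semi-frozen variables are units), so Theorem \ref{thm: inv upper} cannot be invoked as you state; instead, since $i$ remains semi-frozen in $\bar t$, the variable $\bar x_i$ is a unit of $\Orb_\lX(\lX)$, hence $\val_d(\bar x_i)=0$ for every $d\in D$, and $\val_d(\bar x_i)=m_{i,d}-\nu_{d i}$ then forces $m_i=\nu_{\bullet i}$ directly. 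With these corrections your plan closes into a complete proof.
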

In other words, the seed $\lif t^D$ provides the best possible candidate for giving the ring $\Orb_\lX(\lX)$ a cluster structure compatible with its $X(T)$-graduation and with the base  cluster structure on $\Orb_Y(Y)$ determined by the seed $t$.
Nevertheless, the inclusion between $\uclu(\lif t^D) $ and $\Orb_\lX(\lX)$ may be strict. 
The problem of determining if $\uclu(\lif t^D)$ equals $ \Orb_\lX(\lX)$ can be addressed, geometrically, as follows.

\bigskip

Note that, by Theorem \ref{thm: min mon lifting}, the rings $\uclu(\lif t^D)$ and $\Orb_\lX(\lX)$ have the same fraction field. 
Therefore, for any $d \in D$, we can evaluate the cluster valuation $\cval_d$ induced by the vertex $d$ of the seed $\lif t^D$ at elements of the ring $\Orb_\lX(\lX)$.
The next proposition is part of \cite[Proposition 4.1.4]{francone2023minimal}.

\begin{prop}
    \label{prop: equality conditions} The following are equivalent.
    \begin{enumerate}
      \item  $\uclu(\lif t^D)= \Orb_\lX(\lX).$
    \item For any $d \in D$, we have that $\cval_d\bigl(\Orb_\lX(\lX) \bigr) \subseteq \NN \cup \{ \infty \}$.
    \item For any $d \in D$, we have that $\cval_d = \val_d$ over $\Orb_\lX(\lX).$ 
    \end{enumerate}
\end{prop}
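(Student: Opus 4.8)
The plan is to prove the chain of implications $(1) \Rightarrow (3) \Rightarrow (2) \Rightarrow (1)$, using the fact (from Theorem \ref{thm: min mon lifting}) that $\uclu(\lif t^D) \subseteq \Orb_\lX(\lX)$ with both rings sharing the same fraction field, and the fact (from Corollary \ref{cor:cox, to compare Leclerc} together with the definition of $\lif 0$) that for the initial cluster variables we have $\val_d(\lif x_j) = \lif 0_{d,j}$ for $j \in I$ and $\val_d(\lif x_e) = \delta_{d,e}$ for $e \in D$, so that $\val_d$ agrees with the divisorial valuation attached to the frozen vertex $d$ \emph{on the initial seed $\lif t^D$}. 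By \cite[Lemma 2.3.1]{francone2023minimal} (recalled in the excerpt), the cluster valuation $\cval_d$ is the unique divisorial valuation on $\CC(\Delta)$ induced by vanishing of $\lif x_d$ along the coordinate hyperplane in the affine space $\Spec \CC[\lif t^D]$; and since $\val_d$ already has this description on the initial seed, we get $\cval_d = \val_d$ on $\CC[\lif t^D] \subseteq \uclu(\lif t^D)$, hence on the fraction field.

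First I would establish $(1) \Rightarrow (3)$: if $\uclu(\lif t^D) = \Orb_\lX(\lX)$, then by the observation above $\cval_d$ and $\val_d$ are two divisorial valuations on $\CC(\lX)$ agreeing on the subring $\CC[\lif t^D]$, whose fraction field is all of $\CC(\lX)$; two valuations agreeing on a subring with the same fraction field agree everywhere, so $\cval_d = \val_d$ on $\Orb_\lX(\lX)$. (In fact this part does not even need hypothesis (1); the real content will come out below.) Next, $(3) \Rightarrow (2)$ is immediate: $\val_d$ takes values in $\NN \cup \{\infty\}$ on $\Orb_\lX(\lX)$ because $\val_d$ is the order of vanishing along an irreducible divisor and every element of $\Orb_\lX(\lX)$ is a \emph{regular} function, hence has nonnegative order along every divisor; so if $\cval_d = \val_d$ there, then $\cval_d(\Orb_\lX(\lX)) \subseteq \NN \cup \{\infty\}$.

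The main work is $(2) \Rightarrow (1)$, and this is the step I expect to be the real obstacle. The idea is: we already have $\uclu(\lif t^D) \subseteq \Orb_\lX(\lX)$, and we want the reverse inclusion. By Theorem \ref{upper cluster equals upper bound}, since $\lif t^D$ is of maximal rank (this follows because $t$ is of maximal rank and $\lif B = \binom{B}{-\nu B}$ has the same column rank as $B$), we have $\uclu(\lif t^D) = \upp(\lif t^D) = \Li(\lif t^D) \cap \bigcap_{k \in I_{uf}} \Li(\mu_k(\lif t^D))$. So take $f \in \Orb_\lX(\lX)$; by Theorem \ref{thm: min mon lifting} applied after localizing (or directly, since $\uclu(\lif t)= \Orb_\lX(T \times Y)$ and $\Orb_\lX(\lX) \subseteq \Orb_\lX(T\times Y)$), $f$ lies in $\uclu(\lif t) = \uclu(\lif t^D)_{\prod_d \lif x_d}$, so $f = g / \prod_d \lif x_d^{\,a_d}$ for some $g \in \uclu(\lif t^D)$ and $a_d \geq 0$. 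Then $\cval_d(f) = \cval_d(g) - a_d$, and the point is that hypothesis (2) forces $\cval_d(f) \geq 0$; combined with the fact that $\cval_d$ restricted to $\uclu(\lif t^D)$ is exactly the valuation $\val_d^{\lif t^D}$ attached to the frozen variable $\lif x_d$ (which records the $\lif x_d$-adic order), this lets one conclude $f \in \uclu(\lif t^D)$: one shows that an element of $\uclu(\lif t)$ with $\cval_d \geq 0$ for all $d$ already lies in the Laurent ring $\Li(\lif t^D_i)$ for every $i \in I_{uf} \cup \{0\}$, hence in $\upp(\lif t^D) = \uclu(\lif t^D)$. The delicate part is checking that "$\cval_d \geq 0$" propagates correctly through all the Laurent expansions $\Li(\mu_k(\lif t^D))$ — i.e., that the non-invertibility of $\lif x_d$ in each mutated seed is detected exactly by $\cval_d$ — which is where \cite[Lemma 2.3.1]{francone2023minimal} (seed-independence of $\cval_d$) and the structure of the graded mutation formula \eqref{mutation graduation} for $\lif 0$ do the heavy lifting. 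I would cite \cite[Proposition 4.1.4]{francone2023minimal} for the technical core of this last implication rather than reprove it in full.
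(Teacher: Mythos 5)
Your step $(1)\Rightarrow(3)$ contains a genuine error, and it is the kind of error that would make the whole proposition vacuous. You argue that $\cval_d$ and $\val_d$ agree on $\CC[\lif t^D]$ because they take the same values on the initial lifted cluster variables, and then conclude they agree on the common fraction field; you even remark that this "does not need hypothesis (1)". But a valuation is not determined by its values on ring generators: for a sum $\sum_n a_n \lif x^n$ one only gets $\val_d \geq \min_n n_d = \cval_d$, and cancellation can make $\val_d$ strictly larger (an element not divisible by $\lif x_d$ as a polynomial may still vanish identically along the divisor $\{X_d=0\}$ in $\lX$, since $\cval_d$ and $\val_d$ are divisorial valuations attached to divisors in two different birational models). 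If your unconditional claim $\cval_d=\val_d$ were true, then (3), hence (2), hence (1) would hold always, contradicting the paper's explicit warning that the inclusion $\uclu(\lif t^D)\subseteq \Orb_\lX(\lX)$ may be strict. A correct proof of $(1)\Rightarrow(3)$ must genuinely use (1); for instance, assuming $\val_d(f)>\cval_d(f)$ for some $f\in\Orb_\lX(\lX)=\uclu(\lif t^D)$, one considers $g:=f/\lif x_d^{\,\cval_d(f)+1}$, checks that $g$ is regular on $T\times Y$ with $\val_{d'}(g)\geq 0$ for all $d'$, hence regular on $\lX$ by normality and condition 3 of Definition \ref{def: suitable lifting}, and then $\cval_d(g)=-1$ contradicts $g\in\uclu(\lif t^D)\subseteq\Li(\lif t^D)$.

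For context: the paper does not prove this proposition at all; it is quoted directly from \cite[Proposition 4.1.4]{francone2023minimal}. Your implication $(3)\Rightarrow(2)$ is fine, and your sketch of $(2)\Rightarrow(1)$ (write $f\in\Orb_\lX(\lX)\subseteq\uclu(\lif t)=\uclu(\lif t^D)_{\prod_d\lif x_d}$, use seed-independence of $\cval_d$ to see that $\cval_d(f)\geq 0$ forces nonnegative exponents of $\lif x_d$ in each Laurent ring $\Li(\mu_k(\lif t^D))$, then invoke Theorem \ref{upper cluster equals upper bound} since $\lif B$ inherits maximal rank from $B$) is essentially sound, though you ultimately defer its core to the same citation the paper uses. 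So the deliverable missing from your proposal is a correct argument for $(1)\Rightarrow(3)$; as written, that step would fail.
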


\subsection{Sheaves of line bundles and the Cox sheaf}

 Let $Z$ be an integral, normal, locally factorial $\CC$-scheme of finite type. Unless explicitly stated, the symbol $U$ refers to an open subset of $Z$.
 We start by recalling some classical results and notation, following \cite{hartshorne2013algebraic}.

We denote by $\Divv(Z)$  the group of \textit{Weil divisors} of $Z$. 
Recall that $\Divv(Z)$ is the free abelian group on  the set $Z^{(1)}$ of  closed irreducible subspaces of $Z$ codimension one.
The \textit{prime} divisors of $Z$ are the canonical basis elements of the group $\Divv(Z)$ corresponding to the set $Z^{(1)}$.
Recall that a divisor  $E= \sum_{S \in Z^{(1)}}n_S S$ is \textit{effective} if, for every $S \in Z^{(1)}$, we have that $n_S \geq 0$. 
In this case, we write $E \geq 0$, and we extend this notation to a partial order on $\Divv(Z)$ by saying that $E \geq E'$ if $E-E' \geq 0$. 
For a rational function $f \in \CC(Z)^\times$, we denote by  $\divv(f) \in \Divv(Z)$ the divisor of $f$.
For a divisor $E \in \Divv(Z)$, the associated quasi-coherent sheaf on $Z$, denoted by $\Orb(E)$, is defined by the formula
\begin{equation}
    \label{eq: sections O(E)}
\Orb(E)(U) := \{ f \in \CC(Z)^\times \, : \, \divv(f)_{|U} + E_{|U} \geq 0 \} \cup \{0\}.
\end{equation}
We denote by $\Pic(Z)$ the group of isomorphism classes of line bundles (or invertible sheaves) on $Z$.
Recall that, because of \cite[Proposition 6.11]{hartshorne2013algebraic}, any Weil divisor of $Z$ is a Cartier divisor. Therefore, we have an exact sequence
\[
\begin{tikzcd}[row sep=tiny]
	{\CC(Z)^\times} && {\Divv(Z)} && {\Pic(Z)} && 0 \\
	&& E && {[\Orb(E)],}
	\arrow["\divv", from=1-1, to=1-3]
	\arrow[from=1-3, to=1-5]
	\arrow[from=1-5, to=1-7]
	\arrow[maps to, from=2-3, to=2-5]
\end{tikzcd}
\]
where $[\Orb(E)]$ denotes the class of the sheaf $\Orb(E)$ in the group $\Pic(Z).$
Let $\eta$ be the generic point of $Z$. 
For a  quasi-coherent sheaf $\Fc$ on $Z$, an element of the stalk $\Fc_\eta$ is called a \textit{rational section} of $\Fc$.

\bigskip

Let $\Fc $ be a line bundle on $Z$.
A \textit{local parameter} $\sigma$ for the line bundle $\Fc$, on an open subset $U$ of $Z$, is a section $\sigma \in \Fc(U)$ such that the map $$\begin{array}{rcl}
   \Fc_{|U} & \longto & \Orb_U\\[0.3em]
   \tau & \longmapsto & \displaystyle \frac{\tau}{\sigma}
\end{array}$$
is a well defined isomorphism.
A \textit{trivialisation} $\{(U_j, \sigma_j) \}_{j \in J}$ of $\Fc$ is an open cover $\{U_j\}_{j\in J }$ of $Z$ and a collection of local sections $\{ \sigma_j \}_{j \in J}, $ such that $\sigma_j$ is a local parameter for $\Fc$ on the open subset $U_j$.
Given such a trivialisation and a non-zero rational section $\tau \in \Fc_\eta \setminus \{0\}$, we can define $\divv_Z(\tau) \in \Divv(Z)$: the  \textit{divisor} of $\tau$, by 
\begin{equation}
    \label{eq: divisor of a section}
    \divv_Z(\tau)_{|U_j}:= \divv \biggl (\frac{\tau}{\sigma_j} \biggr )_{|U_j} \qquad (j \in J).
\end{equation}
It is easy to see that the divisor $\divv_Z(\tau)$ is well defined and that it is independent on the trivialisation. 
Given a non-zero rational section $\sigma \in \Fc_\eta$, the map 
\begin{equation}
\label{eq:line boundles and div}
    \begin{array}{rcl}
   \Fc & \longto & \Orb\big(\divv_Z(\sigma)\big)\\[0.3em]
   \tau & \longmapsto & \displaystyle \frac{\tau}{\sigma}
\end{array}
\end{equation}
is a well defined isomorphism.

\bigskip

Let $D$ be finite set and $L=(L_d)_{d \in D}$ be a collection of line bundles indexed by $D$. 
For $n \in \ZZ^D$, we use the notation  
$$
L^n:= \bigotimes_{d \in D} L_d^{n_d}.
$$
We define the \textit{sheaf of line bundles} $\Rc_L$ by the formula
\begin{equation}
    \label{eq: R L}
    \Rc_L(U):= \bigoplus_{n \in \ZZ^D} L^n(U).
\end{equation}
This is a quasi-coherent sheaf of $\ZZ^D$-graded $\Orb_Z$-algebras.
For $n \in \ZZ^D$, the sheaf $L^n$ is the homogeneous component of $\Rc_L$ of degree $n$.
The multiplication of homogeneous elements of the sheaf $\Rc_L$ is given, locally, by tensor product of sections. 
If we fix isomorphisms $L_d \simeq \Orb(E_d)$, for some divisors $E_d$ ($d \in D$), then we have induced isomorphisms
$$
L^n \simeq \Orb\bigl( \sum_{d \in D} n_d E_d \bigr) \qquad (n \in \ZZ^D).
$$
Then, the multiplication of homogeneous elements of the sheaf $\Rc_L$ corresponds, via the previous isomorphisms and Eq. \eqref{eq: sections O(E)}, to the multiplication of functions in the field $\CC(Z).$

\begin{definition}
    \label{def:cox sheaf} If $\Pic(Z)$ is freely-generated by the classes of the line bundles of $L$, then $\Rc_L$ is called the \textit{Cox sheaf} of $Z$. Its $\Pic(Z)$-graded ring of global sections $\Rc_L(Z)$ is called the \textit{Cox ring} of $Z$, and it is denoted by $\Cox(Z)$. The Cox ring and Cox sheaf are well defined up to isomorphism \cite[Section 4.1]{arzhantsev2015cox}.
\end{definition}

Our objective is to try to construct cluster algebra structures on the ring $\Rc_L(Z)$, for some pairs $(Z,L)$. We are mostly interested in the case of the Cox ring. 

\begin{remark}
    Let $Z$ and $L$ as before, and let $Z_{sm}$ be the smooth locus of $Z$.
The complement of $Z_{sm}$ in $Z$ is of codimension at least two.
Therefore, by \cite[Proposition 6.5]{hartshorne2013algebraic} and the Hartogs' lemma, we have natural isomorphisms induced by restriction
$$
\Pic(Z) \simeq \Pic(Z_{sm}), \qquad \Rc_L(Z) \simeq \Rc_L(Z_{sm}).
$$
Therefore, we can restrict our attention to smooth schemes. 
Note that we are not assuming $Z$ to be separated.
\end{remark}

\subsection{The characteristic space}
\label{sec:charact space}
Let $Z$, $D$ and $L$ be as in the previous section. 
We assume furthermore that $Z$ is smooth.
We denote by 
\begin{equation}
\label{eq: def XL rel spctrum}
    \lX_L:= \Specbf(\Rc_L)
\end{equation}
the \textit{relative spectrum} of the sheaf of $\Orb_Z$-algebras $\Rc_L$, endowed with its canonical morphism $p: \lX_L \longto Z$, as defined for example in \cite{hartshorne2013algebraic}. 
The scheme $\lX_L$ and the morphism $p$ can be defined via a glueing construction. 
In particular, the morphism $p$ is affine and satisfies the property that
\begin{equation}
    \label{eq: sections charact space}
    p_*\Orb_{\lX_L}= \Rc_L.
\end{equation} 
Moreover, for any affine open subset $U$ of $ Z$, the morphism of affine schemes $p\inv(U) \longto U$ corresponds, at the level of regular functions, to the canonical inclusion map
$$
\Orb_Z(U)= \Rc_L(U)_0 \longto \Rc_L(U).
$$

The scheme $\lX_L$ and the morphism $p$ can also be characterized by means of the following universal property.
For any scheme $V$ and any pair $(f,\varphi)$ consisting of:
\begin{enumerate}
    \item a morphism $f:V \longto Z$,
    \item a  morphism of $\Orb_Z$-algebras $\varphi : \Rc_L \longto f_* \Orb_V$,
\end{enumerate}
 there exists a unique morphism $ \widetilde f : V \longto \lX_L$ such that the following diagrams are commutative

\begin{equation}
    \label{eq:univ property rel spectrum}
    \begin{tikzcd}
	& {\lX_L} && {p_*\Orb_{\lX_L}} && {p_*\widetilde f_* \Orb_V} \\
	V & Z && {\Rc_L} && {f_*\Orb_V}
	\arrow["{\widetilde f}", from=2-1, to=1-2]
	\arrow["p", from=1-2, to=2-2]
	\arrow["f"', from=2-1, to=2-2]
	\arrow["{p_* \widetilde f^\sharp}"', from=1-4, to=1-6]
	\arrow[shift left, no head, from=1-4, to=2-4]
	\arrow[no head, from=1-4, to=2-4]
	\arrow["\varphi", from=2-4, to=2-6]
	\arrow[shift left, no head, from=1-6, to=2-6]
	\arrow[no head, from=1-6, to=2-6]
\end{tikzcd}
\end{equation}

Let $T:=\GG_m^D$.
Recall the identification between the groups $\ZZ^D$ and $X(T)$, that assigns to an element $n \in \ZZ^D$ the character $\varpi_n$ defined by Formula \eqref{eq: identification characters lattice}. 
The $\ZZ^D$-graduation on the sheaf $\Rc_L$ corresponds to a $T$-action on the scheme $\lX_L$, regarding to which the morphism $p$ is a good quotient. 
A proof of this fact can be found in \cite{arzhantsev2015cox}. 
The semi-invariant regular functions on $\lX_L$, with respect to this $T$-action, are the homogeneous global sections of the sheaf $\Rc_L$. 
In particular, we have that
\begin{equation}
    \label{eq: semi-inv characteristic space}
    \Orb_{\lX_L}(\lX_L)_{\varpi_n}= L^n(Z) \qquad (n \in \ZZ^D).
\end{equation}

As $Z$ is smooth, the map $p$ is actually a locally-trivial $T$-bundle in the Zariski topology. 
We can construct explicit trivializations as follows.

Let $\sigma = (\sigma_d)_{d \in D}$ be a collection of sections such that $\sigma_d \in L_d(Z)$. 
For $d \in D$, we denote by $Z_{\sigma_d}$ the non-vanishing locus of $\sigma_d$ and we use the notation
$$
Z_\sigma:= \bigcap_{d \in D} Z_{\sigma_d}.
$$
Note that the section $\sigma_d$ is a local parameter for the bundle $L_d$ on the open subset $Z_{\sigma_d}$.
Hence, using \eqref{eq:line boundles and div}, we have that for any affine open subset $U$ of $Z_\sigma$, the morphism
\begin{equation}
    \label{eq:local triv ssheaf of line b}
    \begin{array}{rcl}
       \Rc_L(U) & \longto & \CC[T] \otimes \Orb_Z(U)\\[0.3em]
       (\tau_n)_{n \in \ZZ^D} & \longmapsto & \displaystyle \sum_{n \in \ZZ^D} \varpi_n \otimes \frac{\tau_n}{\sigma^n}
    \end{array} 
\end{equation}
is a $\ZZ^D$-graded isomorphism. Therefore, the previous map induces a $T$-equivariant isomorphism $T \times U \longto p\inv(U)$, whose pullback of regular functions is defined by Formula \ref{eq:local triv ssheaf of line b}. 
It follows, by glueing, that we actually have a $T$-equivariant isomorphism
$$
T \times Z_\sigma \longto p\inv(Z_\sigma),
$$
locally defined by \eqref{eq:local triv ssheaf of line b}.
The previous discussion implies that the scheme $\lX_L$ is smooth and integral. In particular, the ring $\Rc_L(Z)=\Orb_{\lX_L}(\lX_L)$ is a normal domain.

\bigskip

Let $U$ be an open subset of $Z$ and $\tau $ be an element of $ L^n(U)$, for some $n \in \ZZ^D$. 
Recall the definition (Eq. \eqref{eq: divisor of a section}) of the divisor
 $\divv_Z(\tau) \in \Divv(Z)$, obtained by considering $\tau$ as a rational section of the line bundle $L^n$.
We write $\divv_{\lX_L}(\tau)$ for the divisor of $\lX_L$ obtained by considering $\tau$ as a rational function on $\lX_L$. 
We have the following relation \cite[Lemma 3.3.2]{arzhantsev2015cox} between these two divisors.

\begin{lemma}
    \label{lem:divisors rel spec}
Let $\tau \in L^n(U)$, for some $n \in \ZZ^D$. We have that $\divv_{\lX_L}(\tau)= p^*\big(\divv_Z(\tau)\big).$
\end{lemma}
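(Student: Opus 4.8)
The plan is to verify the claimed identity $\divv_{\lX_L}(\tau) = p^*\bigl(\divv_Z(\tau)\bigr)$ by reducing to a local computation, using the fact that $p$ is a locally-trivial $T$-bundle in the Zariski topology. Since both sides are Weil divisors on the integral smooth scheme $\lX_L$, and the prime divisors of $\lX_L$ lying over a prime divisor $S \in Z^{(1)}$ are in bijection with the irreducible components of $p^{-1}(S)$, it suffices to check the coefficient of each prime divisor of $\lX_L$ on both sides. Because $p$ is flat with integral fibers (it is a torus bundle), $p^{-1}(S)$ is irreducible for each $S \in Z^{(1)}$, and $p^*$ simply sends $S$ to $p^{-1}(S)$ with multiplicity one. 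So the task is to compare, for each $S \in Z^{(1)}$, the order of vanishing of $\tau$ (as a rational function on $\lX_L$) along $p^{-1}(S)$ with the order of vanishing of $\tau$ (as a rational section of $L^n$) along $S$.

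First I would fix a section collection $\sigma = (\sigma_d)_{d \in D}$ with $\sigma_d \in L_d(Z)$ such that the generic point of $S$ lies in $Z_\sigma$; this is possible because we may shrink to an affine open $U$ meeting $S$ and trivialize each $L_d$ there, at the cost of replacing the global $\sigma_d$ by local parameters — and since both sides of the asserted equality are local in nature and independent of trivialization (by the remarks following Eq.~\eqref{eq: divisor of a section}), working with local parameters is harmless. Over $Z_\sigma$ the isomorphism \eqref{eq:local triv ssheaf of line b} identifies $p^{-1}(Z_\sigma)$ with $T \times Z_\sigma$, under which the rational function $\tau$ on $\lX_L$ becomes $\varpi_n \otimes \tfrac{\tau}{\sigma^n}$, i.e.\ the pullback of the regular/rational function $\tfrac{\tau}{\sigma^n} \in \CC(Z)$ on $Z_\sigma$ multiplied by the invertible monomial $\varpi_n \in \CC[T]^\times$. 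The monomial $\varpi_n$ contributes nothing to any divisor, so $\divv_{\lX_L}(\tau)_{|p^{-1}(Z_\sigma)} = \divv_{T \times Z_\sigma}\bigl(1 \otimes \tfrac{\tau}{\sigma^n}\bigr)$.

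Then I would invoke the standard fact that for the projection $\mathrm{pr} : T \times Z_\sigma \to Z_\sigma$ and a rational function $g \in \CC(Z_\sigma)^\times$ one has $\divv_{T\times Z_\sigma}(1 \otimes g) = \mathrm{pr}^*\bigl(\divv_{Z_\sigma}(g)\bigr)$; this is immediate since $T$ is an integral variety with trivial Picard contribution along the relevant codimension-one loci, or more concretely because $\CC[T] \otimes \Orb_{Z_\sigma}(V)$ localized at a height-one prime coming from $Z_\sigma$ has the same valuation of $1\otimes g$ as $g$ has in $\Orb_{Z_\sigma}(V)$. Applying this with $g = \tfrac{\tau}{\sigma^n}$ gives $\divv_{\lX_L}(\tau)_{|p^{-1}(Z_\sigma)} = p^*\bigl(\divv_{Z_\sigma}(\tfrac{\tau}{\sigma^n})\bigr)$, and by the very definition \eqref{eq: divisor of a section} of $\divv_Z(\tau)$ via the trivialization $(Z_{\sigma_d}, \sigma_d)$, we have $\divv_{Z_\sigma}(\tfrac{\tau}{\sigma^n})_{|Z_\sigma} = \divv_Z(\tau)_{|Z_\sigma}$ (since $\tfrac{\tau}{\sigma^n}$ is exactly $\tau/\sigma^n$ with $\sigma^n = \bigotimes \sigma_d^{n_d}$ a local parameter for $L^n$ on $Z_\sigma$). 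Letting the generic point of $S$ range over all of $Z^{(1)}$, the two divisors agree in a neighborhood of each codimension-one point of $\lX_L$ lying over $Z$, hence everywhere.

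The main obstacle I anticipate is purely bookkeeping: carefully matching the prime divisors of $\lX_L$ with those of $Z$ under the affine bundle $p$ — in particular confirming that every prime divisor of $\lX_L$ dominates a prime divisor of $Z$ (no "vertical" prime divisors contracted by $p$, which holds because $p$ is a $T$-bundle and $T$ is connected) — and ensuring the independence-of-trivialization argument is clean so that passing from global sections $\sigma_d$ to local parameters on a small affine is legitimate. None of this is deep; it is the sort of verification that Lemma~\ref{lem:divisors rel spec} is cited for in \cite{arzhantsev2015cox}, and the cleanest writeup is probably just to cite that reference after sketching the local trivialization argument above.
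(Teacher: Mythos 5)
The paper does not actually prove this lemma: it simply cites \cite[Lemma 3.3.2]{arzhantsev2015cox}. Your local-trivialization argument is the standard proof of that statement and is essentially correct: over an open $V$ on which all the $L_d$ admit local parameters, the identification $p\inv(V)\simeq T\times V$ of \eqref{eq:local triv ssheaf of line b} turns $\tau$ into a unit ($\varpi_n$) times the pullback of the rational function $\tau/\sigma^n$, whose divisor on $V$ is by definition $\divv_Z(\tau)_{|V}$; pullback of divisors along the projection $T\times V\to V$ then gives the equality over $p\inv(V)$, and covering $Z$ by such opens gives it globally. Your remark that only local parameters (rather than global sections) are available in general, and that this is harmless by independence of trivialization, is exactly the right fix.

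One point to tighten: the parenthetical claim that ``every prime divisor of $\lX_L$ dominates a prime divisor of $Z$'' is false as stated — a prime divisor of $\lX_L$ can dominate $Z$ itself (a horizontal divisor, e.g.\ $\{1\}\times Z$ inside $\GG_m\times Z$ when $L$ is trivial), and such divisors lie over no $S\in Z^{(1)}$. What your comparison-of-coefficients scheme actually requires is that no such horizontal divisor occurs in the support of either side. For $p^*\big(\divv_Z(\tau)\big)$ this is immediate; for $\divv_{\lX_L}(\tau)$ it follows from the very computation you already made: over any trivializing open $V$ the function $\tau$ is a unit times a pullback from $V$, so $\divv_{\lX_L}(\tau)_{|p\inv(V)}$ has no component dominating $V$, and a horizontal component would have to meet $p\inv(V)$. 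With that small logical rearrangement (and the dimension count showing every other prime divisor in the supports is of the form $p\inv(S)$, $S\in Z^{(1)}$, with $p^*S=p\inv(S)$ appearing with multiplicity one), your proof is complete and matches the argument the paper delegates to the reference.
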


The notation $p^*\big(\divv_Z(\tau)\big)$ stands for the pullback of $\divv_Z(\tau)$ along $p$, which is well defined since $\divv_Z(\tau)$ is a Cartier divisor, as $Z$ is locally factorial.

\section{Lifting cluster structures}
\label{sec:lifting clust structures}
\subsection{Suitable for lifting characteristic spaces}
\label{sec: Suitable for lifting ch.space}

Let $Z$, $D$, $L$, $T$ and $
p: \lX_L:=\Specbf(\Rc_L) \longto Z
$ as in Section \ref{sec:charact space}.
Moreover, let $\sigma=(\sigma_d)_{d \in D}$ be a collection of global sections such that $\sigma_d \in L_d(Z)$ and the corresponding divisors $E_d:= \divv_Z(\sigma_d) \, (d \in D)$ are prime and pairwise distinct.
By Lemma \ref{lem:divisors rel spec} and the fact that $p: \lX_L \longto Z$ is a locally trivial $T$-bundle in the Zariski topology, we deduce that the divisors $\divv_{\lX_L}(\sigma_d)= p^* \big( \divv_Z(\sigma_d) \big)$ are prime and pairwise distinct.
For $d \in D$, we denote by  
$$
\val_d: \CC(\lX_L) \longto \ZZ \cup \{\infty\}
$$
the valuation associated to the prime divisor $\divv_{\lX_L}(\sigma_d)$. 
 Let $Y:= Z_\sigma$. 
 Moreover, consider the $T$-equivariant isomorphism
 $$
 \phi: T \times Y \longto p\inv(Y)
 $$
 locally defined  by \eqref{eq:local triv ssheaf of line b}.

\begin{prop}
\label{prop:Line bund suitable lift}
    The triple $(\lX_L, \phi, \sigma)$ is a homogeneously suitable for $D$-lifting scheme.
\end{prop}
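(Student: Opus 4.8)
The plan is to verify the three conditions of Definition \ref{def: suitable lifting} for the triple $(\lX_L, \phi, \sigma)$, using the structural results on the characteristic space collected in Section \ref{sec:charact space}. Most of the work has effectively already been done, so the proof amounts to reading off consequences of Lemma \ref{lem:divisors rel spec}, Formula \eqref{eq:local triv ssheaf of line b}, and the good-quotient description of $p$; the main task is to assemble these correctly and to identify the base of characters of $T$ that Condition 2 demands.

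\medskip

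\emph{Condition 1.} I need $\val_{d_1}(\sigma_{d_2}) = \delta_{d_1,d_2}$. By construction $\val_{d}$ is the valuation attached to the prime divisor $\divv_{\lX_L}(\sigma_d) = p^*(E_d)$, and by Lemma \ref{lem:divisors rel spec} we have $\divv_{\lX_L}(\sigma_{d_2}) = p^*(E_{d_2})$. Since the $E_d$ are prime and pairwise distinct, and $p$ is a locally trivial $T$-bundle (so $p^*$ sends distinct prime divisors to distinct prime divisors and preserves multiplicities), the coefficient of the prime divisor $p^*(E_{d_1})$ in $p^*(E_{d_2})$ is exactly $\delta_{d_1,d_2}$. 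This gives Condition 1.

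\medskip

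\emph{Condition 2.} Here I must produce a base $(x_d)_{d\in D}$ of $X(T) = X(\GG_m^D)$ with $\phi^*(\sigma_d) = x_d \otimes 1$. The torus $T = \GG_m^D$ comes with its natural base of characters, which under the identification \eqref{eq: identification characters lattice} corresponds to the standard basis of $\ZZ^D$; concretely $x_d := \varpi_{e_d}$. Now $\sigma_d$, viewed as an element of $L_d(Z) = L^{e_d}(Z)$, pulls back under $\phi$ according to the local formula \eqref{eq:local triv ssheaf of line b}: the only nonzero graded component is in degree $e_d$, and there the section $\sigma_d$ maps to $\varpi_{e_d} \otimes (\sigma_d/\sigma_d) = \varpi_{e_d}\otimes 1 = x_d \otimes 1$. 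Since this holds locally on an affine cover of $Y$, it holds globally, giving $\phi^*(\sigma_d) = x_d \otimes 1$ and showing that $(x_d)_{d\in D}$ is the required base of $X(T)$.

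\medskip

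\emph{Condition 3.} I must identify the irreducible components of maximal dimension of $\lX_L \setminus \phi(T\times Y)$ with the vanishing loci $\divv_{\lX_L}(\sigma_d)$, $d\in D$. By definition $\phi(T\times Y) = p\inv(Y) = p\inv(Z_\sigma)$, and $Z_\sigma = \bigcap_{d\in D} Z_{\sigma_d}$ is the non-vanishing locus of the $\sigma_d$ on $Z$; hence $Z \setminus Z_\sigma = \bigcup_{d\in D} V(\sigma_d)$, where each $V(\sigma_d)$ is the support of the prime divisor $E_d$. Pulling back along the faithfully flat (indeed locally trivial fibre bundle) morphism $p$, the complement $\lX_L \setminus p\inv(Z_\sigma) = \bigcup_{d\in D} p\inv(V(\sigma_d))$, and each $p\inv(V(\sigma_d))$ is irreducible of codimension one in $\lX_L$ (it is the support of the prime divisor $p^*(E_d) = \divv_{\lX_L}(\sigma_d)$, using again that $p$ preserves irreducibility and codimension of divisors). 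Since these are the only codimension-one pieces and they are pairwise distinct, they are precisely the irreducible components of maximal dimension of the complement, which is Condition 3.

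\medskip

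The only mild subtlety — and the step I would be most careful about — is Condition 3, specifically making sure that $p\inv(E_d)$ really is \emph{irreducible} and of the expected codimension, rather than merely having $E_d$ as image; this is where one genuinely uses that $p$ is a Zariski-locally trivial $T$-bundle (so locally $p\inv(E_d) \cong T \times (E_d \cap U)$, which is irreducible since $T$ and $E_d$ are), together with the fact that these local pieces glue along an irreducible total space. Everything else is a direct unwinding of the definitions and of Lemma \ref{lem:divisors rel spec}.
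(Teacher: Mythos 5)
Your proof is correct and follows essentially the same route as the paper: Condition 1 from the primality and pairwise distinctness of the divisors $\divv_{\lX_L}(\sigma_d)=p^*(E_d)$ (via Lemma \ref{lem:divisors rel spec} and local triviality of $p$), Condition 2 by reading off $\phi^*(\sigma_d)=\varpi_{e_d}\otimes 1$ from Formula \eqref{eq:local triv ssheaf of line b}, and Condition 3 by identifying the complement of $p\inv(Z_\sigma)$ with the union of the $p\inv(E_d)$. The paper's proof is just a terser version of the same argument (it dismisses Condition 3 as clear), so no further comments are needed.
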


\begin{proof}
    Condition 1 of Definition \ref{def: suitable lifting} holds since the divisors $\divv_{\lX_L}(\sigma_d)$ are prime and pairwise different. 
    Equation \eqref{eq:local triv ssheaf of line b} implies that $\phi^*(\sigma_d)= \varpi_{e_d} \otimes 1$, hence Condition  2 holds.
    Condition 3 is clear.
    \end{proof}

If $Y$ has a cluster structure, Theorem \ref{thm: min mon lifting} allows, under mild hypothesis, to lift this cluster structure inside the graded ring $\Rc_L(Z)=\Orb_{\lX_L}(\lX_L)$. 
This procedure actually depends on the choice of $\sigma$.
If $\Pic(Z)$ is free and finitely generated and we want to construct cluster structures on the Cox ring of $Z$ via the characteristic space and minimal monomial lifting, we can make the choice of $L$ and $\sigma$ more canonical, whenever we find an appropriate open subset of $Z$. 
This is the content of the  next section.

\subsection{The case of the Cox ring}
\label{sec: case of Cox}

\textbf{Assumptions.} In this section, $Z$ is a smooth irreducible $\CC$-scheme of finite type. 
Moreover,  $Y$ is an  open subset of $Z$ satisfying the following assumptions.
\begin{enumerate}
    \item $\Pic(Y)=\{0\}.$ 
    \item $\Orb_Y(Y)^\times= \CC^\times.$
    \item Any irreducible component of $Z \setminus Y$ is of codimension one in $Z$.
\end{enumerate}

\begin{remark}
\label{rem: assumptions and invertible}
\begin{enumerate}
Let $Y$ be an open subset of $Z$. Then, the following hold.
    \item  If $Y$ is affine, then Assumption 3 is satisfied.
    \item  If $Y$ is affine, its Picard group is trivial if and only if the ring $\Orb_Y(Y)$ is factorial, because of \cite[Propositions 6.2,6.11]{hartshorne2013algebraic}.
    \item Assume that $\Orb_Y(Y)=\uclu(t)$, for some seed $t$ of maximal rank. 
     Then, the condition $\Orb_Y(Y)= \CC^\times$ holds if and only if the seed $t$ doesn't have any semi-frozen variable, because of Theorem \ref{thm: inv upper}.    
    \item Assumption 3 is technical and it is not restrictive as long as we are interested in rings of global sections of line bundles.
Indeed, suppose that $Y$ satisfies Assumptions 1 and 2 and let $D_1, \dots, D_n$  be the irreducible components of $Z \setminus Y$ of codimension strictly greater than one.
    Then, consider the scheme $Z':=Z \setminus \bigcup_{i=1}^n D_n$. 
    The restriction map induces canonical isomorphisms 
$$
\Pic(Z) \longto \Pic(Z') \qquad \Li(Z) \longto \Li(Z'),
$$
for any line bundle $\Li$ on $Z$.
Moreover, the set $Y$ satisfies all the assumptions, as a subset of $Z'.$
\end{enumerate}    
\end{remark}

 Let $D$ be the set of irreducible components of $Z \setminus Y$.
 To avoid confusion, for $d \in D$, we denote by $E_d \in \Divv(Z)$ the prime divisor corresponding to  $d$.  

\begin{lemma}
\label{lem:pic(Z) with nice open}
The group $\Pic(Z)$ is freely generated by the classes of the line bundles $\Orb(E_d)$.    
\end{lemma}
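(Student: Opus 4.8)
The plan is to show that the exact sequence $\CC(Z)^\times \xrightarrow{\divv} \Divv(Z) \to \Pic(Z) \to 0$ recalled above restricts nicely to the open subset $Y$, and to exploit the two hypotheses $\Pic(Y) = \{0\}$ and $\Orb_Y(Y)^\times = \CC^\times$. First I would recall that since $Z$ is smooth (hence locally factorial), restriction of divisors gives a surjection $\Divv(Z) \to \Divv(Y)$ whose kernel is precisely the free subgroup generated by the prime divisors supported on $Z \setminus Y$; by Assumption 3, these are exactly the $E_d$ for $d \in D$. So we have a commutative diagram with exact rows relating the divisor sequences of $Z$ and of $Y$, together with the restriction maps $\Pic(Z) \to \Pic(Y) = \{0\}$. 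This already shows $\Pic(Z)$ is generated by the classes $[\Orb(E_d)]$, $d \in D$: any line bundle on $Z$ is $\Orb(E)$ for some Weil divisor $E$, its restriction to $Y$ is trivial, so $E_{|Y} = \divv(f)_{|Y}$ for some $f \in \CC(Z)^\times$, whence $E - \divv(f)$ is supported on $Z \setminus Y$, i.e.\ is an integer combination of the $E_d$.

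It remains to prove these generators are \emph{free}. Suppose $\sum_{d \in D} n_d E_d = \divv(f)$ for some $f \in \CC(Z)^\times$ and integers $n_d$; I must show all $n_d = 0$. The key point is that such an $f$ has neither zeros nor poles on $Y$ (its divisor is supported on $Z \setminus Y$), so $f$ restricts to an element of $\Orb_Y(Y)^\times$. By Assumption 2, $\Orb_Y(Y)^\times = \CC^\times$, so $f_{|Y}$ is a nonzero constant; since $Y$ is dense in the irreducible scheme $Z$, $f$ itself is a nonzero constant, hence $\divv(f) = 0$. As the $E_d$ are distinct prime divisors, they form part of a $\ZZ$-basis of $\Divv(Z)$, so $\sum n_d E_d = 0$ forces every $n_d = 0$. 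This gives freeness, and combined with the generation statement, the classes $[\Orb(E_d)]$ are a free basis of $\Pic(Z)$.

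I do not expect a serious obstacle here; the argument is essentially a diagram chase plus the two unit/Picard hypotheses on $Y$. The only point requiring a little care is the exactness of the bottom-row comparison — specifically that the kernel of $\Divv(Z) \to \Divv(Y)$ is \emph{exactly} spanned by divisors supported off $Y$ (this uses that codimension-one points of $Y$ are codimension-one points of $Z$, so a prime divisor of $Z$ meeting $Y$ restricts to a nonzero prime divisor of $Y$) and that $\Divv(Z) \to \Divv(Y)$ is surjective (closures of prime divisors of $Y$). Both are standard for varieties, but I would state them explicitly since $Z$ is not assumed separated. With that in hand, the conclusion follows as above.
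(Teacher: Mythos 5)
Your argument is correct and is essentially the paper's proof: the paper simply quotes the excision exact sequence $\bigoplus_{d \in D} \ZZ E_d \longto \Pic(Z) \longto \Pic(Y) \longto 0$ from \cite[Proposition 6.5]{hartshorne2013algebraic} (which you reconstruct by hand from the divisor sequences of $Z$ and $Y$, using Assumption 3 to identify the boundary prime divisors with the $E_d$), and then, exactly as you do, uses $\Pic(Y)=\{0\}$ for generation and the fact that a principal divisor $\sum n_d E_d=\divv(f)$ forces $f\in\Orb_Y(Y)^\times=\CC^\times$, hence $\divv(f)=0$, for freeness. Your extra remarks on surjectivity/kernel of $\Divv(Z)\to\Divv(Y)$ in the possibly non-separated setting are a welcome but inessential refinement of the same approach.
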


\begin{proof}
   Since $Z$ is smooth, by \cite[Proposition 6.5]{hartshorne2013algebraic} we have an exact sequence 
   $$ 
   \bigoplus_{d \in D} \ZZ E_d \longto \Pic(Z) \longto \Pic(Y) \longto 0.
   $$
    Assume that a divisor $E \in \bigoplus_{d \in D} \ZZ E_d$ is principal on $Z$.
    Then, there exists $f \in \CC(Z)^\times$ such that $\divv(f)=E.$
   We deduce that $f \in \Orb_Y(Y)^\times$, as $\divv(f)_{|Y}=0.$ 
   Hence, the function $f$ is constant, which implies that $E=0$. 
   As $\Pic(Y)= \{0\}$, it follows that the natural map $\bigoplus_{d \in D} \ZZ E_d \longto \Pic(Z) $ is an isomorphism.
\end{proof}

Let $L:=\big(\Orb(E_d)\big)_{d \in D}$ and, for $d \in D$, let $\sigma_d$ be the rational function $1$ seen as a global section of the line bundle $\Orb(E_d)$.
Moreover, as in Section \ref{sec:charact space}, let $T:= \GG_m^D$ and consider the isomorphisms
\begin{equation}
    \label{eq: X(T) e Pic(X)}
\begin{tikzcd}[row sep=tiny]
	{\Pic(Z)} && {\ZZ^D} & {} & {X(T)} \\
	{\bigl[L^n\bigr]} && n && {\varpi_n.}
	\arrow[from=1-3, to=1-1]
	\arrow[from=1-3, to=1-5]
	\arrow[maps to, from=2-3, to=2-1]
	\arrow[maps to, from=2-3, to=2-5]
\end{tikzcd}
\end{equation}
Note that $\divv_Z(\sigma_d)=E_d$.
In particular, we have that 
$$
Y=Z_\sigma.
$$
By Lemma \ref{lem:pic(Z) with nice open}, we have that $\Rc_L$ is the Cox sheaf of $Z$. 
As usual, let $p: \lX_L:= \Specbf(\Rc_L) \longto Z$ be the canonical morphism, and recall that we have an equality of $\Pic(Z)$-graded algebras $\Orb_{\lX_L}(\lX_L)= \Cox(Z)$.
Similarly to Section \ref{sec: Suitable for lifting ch.space}, we denote by 
\begin{equation}
    \label{eq: phi standard suitable}
    \phi: T \times Y \longto p\inv(Y)
\end{equation}
  the $T$-equivariant open embedding locally defined  by the collection of sections $\sigma$ and Eq. \eqref{eq:local triv ssheaf of line b}.

\begin{definition}
    \label{def:standard suitable lifting Cox} The \textit{standard homogeneously suitable for lifting structure of $\Cox(Z)$ corresponding to the open subset} $Y$ is the triple $(\lX_L, L, \sigma)$.
\end{definition}

\begin{remark}
\label{eq: terminology standard suitable cox}
 The triple $(\lX_L, L, \sigma)$ is not  a homogeneously suitable for lifting scheme in the sense of Definition \ref{def: suitable lifting}. 
   Conversely, the triple $(\lX_L, \phi, \sigma)$ is a homogeneously suitable for $D$-lifting scheme because of Proposition \ref{prop:Line bund suitable lift}.
  In the rest of the paper, we  commit the abuse of notation of using expressions as:
    \begin{center}
        \textit{the minimal monomial lifting of a seed with respect to $(\lX_L, L, \sigma)$} 
    \end{center}
   instead of 
   \begin{center}
       \textit{the minimal monomial lifting of a seed with respect to $(\lX_L, \phi, \sigma)$},
   \end{center}
   because we find it more convenient to track  the collection of line bundles $L$ rather than the map $\phi$.
       Note also that the subset $Y$ uniquely determines the data of $ L, \, \lX_L, \, \sigma, \, D$ and the map $\phi$.
\end{remark}

\begin{theo}
\label{thm:min lifting Cox}
    Assume that $\Orb_Y(Y)= \uclu(t)$ for some seed $t$ satisfying Hypothesis 1,2,3 of Theorem \ref{thm: min mon lifting}.
    Let $\lif t^D$ be the minimal monomial lifting of the seed $t$ with respect to  $(\lX_L, L, \sigma)$, considered as a $\Pic(Z)$-graded seed by means of the degree configuration $\lif 0$ and  the isomorphisms in \eqref{eq: X(T) e Pic(X)}.
 Then, as $\Pic(Z)$-graded algebras, we have that 
 $$
 \uclu(\lif t^D) \subseteq \Cox(Z), \qquad \uclu(\lif t^D)_{\prod_{d \in D} \sigma_d}= \Cox(Z)_{\prod_{d \in D} \sigma_d}.
 $$ 
\end{theo}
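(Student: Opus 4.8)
The plan is to obtain Theorem \ref{thm:min lifting Cox} as a direct application of Theorem \ref{thm: min mon lifting} and Proposition \ref{prop:Line bund suitable lift}, once the setup is unwound. First I would invoke Proposition \ref{prop:Line bund suitable lift} to record that the triple $(\lX_L, \phi, \sigma)$ is a homogeneously suitable for $D$-lifting scheme, where $\phi$ is the $T$-equivariant open embedding of \eqref{eq: phi standard suitable}; by the conventions set up in Remark \ref{eq: terminology standard suitable cox}, the minimal monomial lifting with respect to $(\lX_L, L, \sigma)$ is by definition the minimal monomial lifting with respect to $(\lX_L, \phi, \sigma)$, so the seed $\lif t^D$ makes sense. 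The hypotheses 1, 2, 3 of Theorem \ref{thm: min mon lifting} are assumed to hold for the seed $t$, and the identification $\Orb_Y(Y) = \uclu(t)$ is part of the assumptions, with $Y = Z_\sigma$ as noted in Section \ref{sec: case of Cox}.

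Next I would apply Theorem \ref{thm: min mon lifting} itself: it yields the inclusion $\uclu(\lif t^D) \subseteq \Orb_{\lX_L}(\lX_L)$ and the equality $\uclu(\lif t) = \Orb_{\lX_L}(T \times Y)$, both as $X(T)$-graded algebras. Using the identification $\Orb_{\lX_L}(\lX_L) = \Cox(Z)$ recalled in Section \ref{sec:charact space} (a consequence of \eqref{eq: sections charact space} and Lemma \ref{lem:pic(Z) with nice open}, which guarantees $\Rc_L$ is the Cox sheaf), and transporting the $X(T)$-graduation to the $\Pic(Z)$-graduation via the isomorphisms in \eqref{eq: X(T) e Pic(X)}, the first displayed inclusion $\uclu(\lif t^D) \subseteq \Cox(Z)$ follows at once, compatibly with the $\Pic(Z)$-grading induced by the degree configuration $\lif 0$ as in Section \ref{sec: suitable lift schemes}.

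For the second displayed equality, I would combine \eqref{eq: localising clust min lifting}, which gives $\uclu(\lif t^D)_{\prod_{d \in D} \lif x_d} = \uclu(\lif t)$, with the identity $\lif x_d = X_d = \sigma_d$ for $d \in D$ from \eqref{formula lifting variables} (the cluster variable indexed by $d$ is exactly the section $\sigma_d$). Hence $\uclu(\lif t^D)_{\prod_{d \in D} \sigma_d} = \uclu(\lif t)$. On the other hand Theorem \ref{thm: min mon lifting} identifies $\uclu(\lif t)$ with $\Orb_{\lX_L}(T \times Y) = \Orb_{\lX_L}(p^{-1}(Z_\sigma))$, which is precisely the localization $\Orb_{\lX_L}(\lX_L)_{\prod_{d \in D}\sigma_d} = \Cox(Z)_{\prod_{d\in D}\sigma_d}$, since $T \times Y = p^{-1}(Z_\sigma)$ is the non-vanishing locus of the $\sigma_d$ on $\lX_L$ (using $\divv_{\lX_L}(\sigma_d) = p^*(E_d)$ from Lemma \ref{lem:divisors rel spec} and $Y = Z_\sigma$). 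Chaining these equalities gives $\uclu(\lif t^D)_{\prod_{d \in D}\sigma_d} = \Cox(Z)_{\prod_{d\in D}\sigma_d}$ as $\Pic(Z)$-graded algebras.

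The proof is essentially a bookkeeping argument: the real content is in Theorem \ref{thm: min mon lifting} and Proposition \ref{prop:Line bund suitable lift}, both already available. The only point requiring a little care — and the one I would state explicitly — is the compatibility of the gradings: one must check that the $X(T)$-graduation coming from the $T$-action on $\lX_L$ matches, under \eqref{eq: X(T) e Pic(X)}, the $\Pic(Z)$-graduation on $\Cox(Z)$ coming from the decomposition into global sections of the $L^n$, and that the degree configuration $\lif 0$ on $\lif t^D$ is the one inducing this same grading; this is exactly what \eqref{eq: semi-inv characteristic space} and the discussion of $\lif 0$ after Definition \ref{def: minimal mon lifting} provide. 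I expect no genuine obstacle beyond making these identifications precise.
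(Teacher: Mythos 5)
Your proposal is correct and follows essentially the same route as the paper: identify $\Cox(Z)$ with $\Orb_{\lX_L}(\lX_L)$, apply Theorem \ref{thm: min mon lifting} for the inclusion, and combine Eq. \eqref{eq: localising clust min lifting} with the fact that $p\inv(Y)$ is the non-vanishing locus of $\prod_{d \in D}\sigma_d$ for the localized equality. The only point the paper states explicitly that you gloss over is that $\lX_L$ is of finite type, which is what guarantees that the localization $\Orb_{\lX_L}(\lX_L)_{\prod_{d\in D}\sigma_d}$ maps isomorphically onto $\Orb_{\lX_L}\bigl(p\inv(Y)\bigr)$.
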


\begin{proof}
    Recall that $\Orb_{\lX_L}(\lX_L)= \Cox(Z)$. 
Moreover, note that the non vanishing locus of the function $\prod_{d \in D} \sigma_d$ in $\lX_L$ is  $p\inv (Y)$. 
    Then, since $\lX$ is of finite type, the natural map 
    $$
    \Orb_{\lX_L}(\lX_L)_{\prod_{d \in D} \sigma_d} \longto \Orb_{\lX_L}\bigl(p\inv(Y) \bigr) 
    $$
    is an isomorphism. 
    Therefore, using Eq. \eqref{eq: localising clust min lifting}, Theorem \ref{thm: min mon lifting} allows to conclude.
\end{proof}

\begin{lemma}
\label{lem: min mon cox affine}
   Assume that $Y$ is affine and $\Orb_Y(Y)= \uclu(t)$ for some seed $t$ of maximal rank. 
   Then, Hypothesis 2 and 3 of Theorem \ref{thm: min mon lifting} are satisfied, and the conclusion of Theorem \ref{thm:min lifting Cox} holds.
\end{lemma}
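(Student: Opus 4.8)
The plan is to obtain the statement as an immediate consequence of Theorem \ref{thm:min lifting Cox}. That theorem is stated under the running assumptions of Section \ref{sec: case of Cox} (which are in force here) and requires only that the initial seed $t$ satisfy Hypotheses 1, 2 and 3 of Theorem \ref{thm: min mon lifting}. Hypothesis 1, maximality of the rank of $B$, is part of the hypothesis of the lemma, so the entire argument reduces to verifying Hypotheses 2 and 3: that any two distinct mutable cluster variables $x_i,x_j$ of $t$ are coprime on $Y$, and that for every $k\in I_{uf}$ the exchange pair $x_k,\mu_k(x)_k$ is coprime on $Y$.

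To this end I would first record that $\Orb_Y(Y)$ is factorial. Indeed, $Y$ is affine and, by Assumption 1 of Section \ref{sec: case of Cox}, $\Pic(Y)=\{0\}$; hence point 2 of Remark \ref{rem: assumptions and invertible} (equivalently, the fact that a finitely generated normal $\CC$-algebra with trivial divisor class group is a UFD) shows that $\Orb_Y(Y)=\uclu(t)$ is a UFD. I would then invoke \cite[Lemma 4.0.9]{francone2023minimal}, recalled right after Theorem \ref{thm: min mon lifting}, which states precisely that when $Y$ is affine and $\Orb_Y(Y)$ is factorial, Hypotheses 2 and 3 of Theorem \ref{thm: min mon lifting} are automatically satisfied. (Morally: in a UFD two elements fail to be coprime only if they share an irreducible factor; by Theorem \ref{thm: inv upper} the cluster variables $x_i,x_j$ are non-associate irreducibles, while coprimality of an exchange pair follows from the exchange relation $x_k\mu_k(x)_k=M_k^++M_k^-$ together with unique factorisation.)

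Having verified Hypotheses 1, 2 and 3 of Theorem \ref{thm: min mon lifting} for $t$, I would then simply apply Theorem \ref{thm:min lifting Cox} to conclude the inclusion $\uclu(\lif t^D)\subseteq\Cox(Z)$ and the equality $\uclu(\lif t^D)_{\prod_{d\in D}\sigma_d}=\Cox(Z)_{\prod_{d\in D}\sigma_d}$ of $\Pic(Z)$-graded algebras. I do not expect any genuine obstacle: the only point requiring care is the factoriality of $\Orb_Y(Y)$, which is exactly what the affineness of $Y$ and the triviality of $\Pic(Y)$ provide; once that is in place, the two cited results from \cite{francone2023minimal} do all the work.
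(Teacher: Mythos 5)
Your proposal is correct and follows essentially the same route as the paper: deduce factoriality of $\Orb_Y(Y)$ from $Y$ being smooth, affine, with $\Pic(Y)=\{0\}$, then apply \cite[Lemma 4.0.9]{francone2023minimal} to get Hypotheses 2 and 3, and conclude via Theorem \ref{thm:min lifting Cox}. No issues.
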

\begin{proof}
    Since $Y$ is smooth, affine, and $\Pic(Y)= \{0\}$, it follows that the ring $\Orb_Y(Y)$ is factorial.
   Then, the statement follows from \cite[Lemma 4.0.9]{francone2023minimal}.
\end{proof}

We conclude this section by explicitly describing the cluster variables of the seed $\lif t^D$ as sections of line bundles on $Z$, defined in terms of geometric properties of $Z$. 
Note that, by the general minimal monomial lifting construction, the seed $\lif t^D$ is naturally defined in terms of geometric properties of the scheme $\lX_L$.

\bigskip

Recall that, according to the general notation introduced in Section \ref{sec: suitable lift schemes}, we identify the space $T \times Y$ with its image through the map $\phi$ \eqref{eq: phi standard suitable}, and we have an immersion $\iota : Y \longto \lX_L$. 
Moreover, for any $n \in \ZZ^D$, we have that 
\begin{center}
$
\Orb_\lX(\lX)_{\varpi_n} = \HH^0 \bigl(Z, \, \Orb\bigl( \sum_{d \in D} n_d E_d \bigr) \bigr).
$
\end{center}
Finally, let $\val_{E_d}: \CC(Z) \longto \ZZ \cup \{\infty\} \, (d \in D)$ be the discrete divisorial valuation associated to the prime divisor $E_d$.

\begin{lemma}
\label{lem: homogenisation Cox}
Let $f \in \Orb_Y(Y) \setminus \{0\}$, and consider the element $n \in \ZZ^D$ and the semi-invariant function $\widetilde f$ on $\lX_L$ defined by Corollary \ref{cor:cox, to compare Leclerc}. 
We have that
\begin{center}
$
n= \big(- \val_{E_d}(f)\bigr)_{d \in D}, \qquad \widetilde f = f \in \HH^0 \bigl( Z, \, \Orb \bigl( \sum_{d \in D} n_d E_d \bigr) \bigr).
$
\end{center}
\end{lemma}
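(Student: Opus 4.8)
The statement is essentially a translation of Corollary \ref{cor:cox, to compare Leclerc} into the concrete language of the Cox setting, so the plan is to unwind the abstract definitions and match them with the divisor-theoretic objects on $Z$. First I would recall that in the standard suitable-for-lifting structure we have $X_d = \sigma_d$, the constant rational function $1$ viewed as a section of $\Orb(E_d)$, and that by construction $\divv_Z(\sigma_d) = E_d$. The key computational input is Lemma \ref{lem:divisors rel spec}: for a section $\tau \in L^n(U)$, one has $\divv_{\lX_L}(\tau) = p^*\big(\divv_Z(\tau)\big)$. Applied to the function $1 \otimes f$, which under the identification $\CC(\lX_L) = \CC(T\times Y)$ corresponds to $f$ viewed as a rational section of $\Orb_Z$ (degree $0$), I would use this together with the fact that $p$ is a locally trivial $T$-bundle to compute $\val_d(1\otimes f)$ in terms of $\val_{E_d}$ on $Z$.

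The crucial point is the following: since $\divv_{\lX_L}(\sigma_d) = p^*(E_d)$ and the valuation $\val_d$ is the one attached to this prime divisor, and since pullback along the flat (indeed smooth) morphism $p$ of the prime divisor $E_d$ is again prime with multiplicity one, we get $\val_d \circ p^* = \val_{E_d}$ on $\CC(Z)$ — more precisely, for $g \in \CC(Z)^\times$ regarded as a rational function on $\lX_L$ via $p$, we have $\val_d(g) = \val_{E_d}(g)$. Taking $g = f$ (which is exactly $1\otimes f$), this gives $\val_d(1\otimes f) = \val_{E_d}(f)$. By Corollary \ref{cor:cox, to compare Leclerc}, $n = \big(-\val_d(1\otimes f)\big)_{d\in D}$, so $n = \big(-\val_{E_d}(f)\big)_{d\in D}$, which is the first claimed equality.

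For the second equality, Corollary \ref{cor:cox, to compare Leclerc} tells us $\widetilde f = X^n(1\otimes f)$. Since $X_d = \sigma_d$ and $\sigma^n = 1$ as a rational function (each $\sigma_d$ is the function $1$), the product $X^n(1\otimes f)$ equals $f$ as a rational function on $\lX_L$; but the point is to identify in which graded piece it lives. Here I would invoke \eqref{eq: semi-inv characteristic space}, namely $\Orb_{\lX_L}(\lX_L)_{\varpi_n} = L^n(Z)$, together with the identification $L^n \simeq \Orb\big(\sum_{d\in D} n_d E_d\big)$ coming from the isomorphisms $L_d = \Orb(E_d)$ and $\sigma_d \mapsto 1$. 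Under this identification, the element $X^n(1\otimes f) \in \Orb_{\lX_L}(\lX_L)_{\varpi_n}$ corresponds precisely to $f \in \HH^0\big(Z, \Orb(\sum_{d\in D} n_d E_d)\big)$, because the condition $\divv_Z(f)_{|U} + \big(\sum n_d E_d\big)_{|U} \geq 0$ defining membership in $\Orb(\sum n_d E_d)(U)$ (Eq. \eqref{eq: sections O(E)}) holds exactly when $\val_{E_d}(f) \geq -n_d$ for all $d$, which is how $n$ was chosen.

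\textbf{Main obstacle.} The only genuinely delicate point is bookkeeping the various identifications: the identification $\CC(\lX_L) = \CC(T\times Y)$ under which $1\otimes f$ makes sense, the compatibility $\val_d \circ p^* = \val_{E_d}$ (which needs that $p$ is a locally trivial fibration so that $p^*E_d$ is reduced and irreducible — this is exactly what Section \ref{sec:charact space} provides since $Z$ is smooth), and finally the identification of the degree-$\varpi_n$ graded component $L^n(Z)$ with $\HH^0(Z,\Orb(\sum n_d E_d))$ compatibly with multiplication, which was spelled out right after \eqref{eq: R L}. None of these steps is hard, but stating them in the right order so that "$\widetilde f = f$" is not a tautology but an honest identification of a section of a specific line bundle is where care is needed.
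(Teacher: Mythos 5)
Your proposal is correct, and for the first equality it follows the paper's proof exactly: identify $1\otimes f$ with $p^*(f)$ and use Lemma \ref{lem:divisors rel spec} (together with the fact that $p^*(E_d)$ is prime, coming from local triviality of $p$) to get $\val_d(1\otimes f)=\val_{E_d}(f)$, then invoke Corollary \ref{cor:cox, to compare Leclerc}. For the second equality the mechanism is slightly different. The paper first checks that $f$, viewed via Eq. \eqref{eq: sections O(E)} as a global section $\overline f$ of $\Orb\bigl(\sum_d n_d E_d\bigr)$, satisfies $\iota^*(\overline f)=f$ (using Eq. \eqref{eq:local triv ssheaf of line b} and the fact that each $\sigma_d$ is the rational function $1$), and then concludes $\overline f=\widetilde f$ by the injectivity of $\iota^*$ on the graded component $\Orb_{\lX_L}(\lX_L)_{\varpi_n}$ from Lemma \ref{lem: pole filtration}; you instead unwind the explicit formula $\widetilde f=X^n(1\otimes f)$ through the identifications $\Orb_{\lX_L}(\lX_L)_{\varpi_n}=L^n(Z)\simeq \HH^0\bigl(Z,\Orb\bigl(\sum_d n_d E_d\bigr)\bigr)$, using that multiplication of homogeneous elements of $\Rc_L$ corresponds to multiplication of rational functions on $Z$ and that each $\sigma_d$ corresponds to $1$. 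Both routes rest on the same identifications, so neither buys much over the other; yours avoids the uniqueness step, the paper's avoids tracking the multiplicative compatibility explicitly. One wording caveat in your write-up: $X^n(1\otimes f)$ does \emph{not} equal $p^*f$ as a rational function on $\lX_L$ (the $\sigma_d$ are nonconstant semi-invariant functions there, with divisor $p^*(E_d)$); the correct statement, which your subsequent sentences do supply, is that under the graded identification of $\Orb_{\lX_L}(\lX_L)_{\varpi_n}$ with rational sections of $\Orb\bigl(\sum_d n_d E_d\bigr)$ inside $\CC(Z)$, this element corresponds to the rational function $f$, so the slip is only in phrasing, not in substance.
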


\begin{proof}
By Corollary \ref{cor:cox, to compare Leclerc}, we have that $n= \bigl( - \val_{p^*(E_d)}(1 \otimes f )\bigr)_{d \in D}$.
Note that the diagram
\[\begin{tikzcd}[row sep=scriptsize]
	{T \times Y} && {\lX_L} \\
	Y && Z
	\arrow[from=1-1, to=1-3]
	\arrow[from=1-1, to=2-1]
	\arrow["p", from=1-3, to=2-3]
	\arrow[from=2-1, to=2-3]
\end{tikzcd}\]
is commutative. 
The maps $T \times Y \longto \lX_L$, $Y \longto Z$, and $T \times Y \longto Y$ are  respectively the natural inclusions and projection.
Therefore, the equality $1 \otimes f= p^*(f)$, of rational functions on $\lX_L$, holds.
Hence, Lemma \ref{lem:divisors rel spec} implies the desired expression for $n.$

 Then, observe that since the rational function $f$ is regular on $Y$, we have that 
    $$
    \divv_Z(f) + \sum_{d \in D} - \val_{E_d}(f) \geq 0.
    $$
    Therefore, the rational function $f$ is a global section of the line bundle $\Orb\bigl( \sum_{d \in D} n_dE_d \bigr)$.
    We denote this section by $\overline{f}.$
    
    Let $m \in \ZZ^D$ and $\psi \in \HH^0 \bigl( Z, \, \Orb \bigl( \sum_{d \in D} n_d E_d \bigr) \bigr) \setminus \{0\}.$
    In other words, $\psi$ is a non-zero rational function on $Z$ satisfying the property that 
    $
    \divv_Z(\psi) + \sum_{d \in D}m_d E_d \geq 0.
    $
    In particular, the function $\psi$ is regular on $Y$, since $\divv(\psi)_{|Y} \geq 0.$
   Since  $\sigma_d \, (d \in D)$ is the rational function 1 considered as a global section of the line bundle $\Orb(E_d)$, it follows at once from Eq. \eqref{eq:local triv ssheaf of line b} and the definition of the map $\iota$ that $\iota^*(\psi)=\psi.$
   
    We deduce that $\iota^*\bigl( \overline{f} \bigr)=f$.
   In particular, $\overline{f}$ and $\widetilde f$ are two semi-invariant functions on $\lX_L$, of the same degree, whose pullback along $\iota$ is $f$. 
   Lemma \ref{lem: pole filtration} implies that $\overline{f}= \widetilde f.$
\end{proof}

\begin{coro}
    Let $\nu$ be the minimal lifting matrix of the seed $t$ with respect $(\lX_L, L, \sigma)$.
    We have that
    $$
    \nu_{di}= -\val_{E_d}(x_i) \qquad (d \in D, \, i \in I).
    $$
    Moreover, for $i \in \lif I= I \sqcup D$, we have that 
    $$
    \lif x_i = \begin{cases}
        x_i \in \HH^0 \bigl( Z, \,  \Orb \bigl( \sum_{d \in D} \nu_{di} E_d \bigr) \bigr) & \text{if} \quad i \in I \\[0.5em]
        1 \in \HH^0\bigl( Z, \, \Orb(E_d) \bigr) & \text{if} \quad i = d \in D.
    \end{cases}
    $$
\end{coro}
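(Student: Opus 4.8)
The plan is to combine the general minimal monomial lifting formulas with the concrete identifications established in Lemma \ref{lem: homogenisation Cox}. The statement asserts two things: the formula for the entries of the minimal lifting matrix $\nu$, and the explicit description of the cluster variables $\lif x_i$ of the seed $\lif t^D$ as sections of line bundles on $Z$.

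First I would unwind the definition of $\nu$. By Definition \ref{def: minimal lift matrix} applied to the suitable for lifting structure $(\lX_L, \phi, \sigma)$ (see Remark \ref{eq: terminology standard suitable cox} for the abuse of notation with $L$), we have $\nu_{d,i} = -\val_d(1 \otimes x_i)$, where $\val_d$ is the divisorial valuation on $\CC(\lX_L)$ attached to the prime divisor $\divv_{\lX_L}(\sigma_d) = p^*(E_d)$. Exactly as in the proof of Lemma \ref{lem: homogenisation Cox}, the commutativity of the square relating $T \times Y \hookrightarrow \lX_L$ and $Y \hookrightarrow Z$ with the projections gives $1 \otimes x_i = p^*(x_i)$ as rational functions on $\lX_L$; then Lemma \ref{lem:divisors rel spec} (or directly the fact that $p$ is a locally trivial $T$-bundle, so $\val_{p^*(E_d)} \circ p^* = \val_{E_d}$) yields $\val_d(1\otimes x_i) = \val_{E_d}(x_i)$, hence $\nu_{d,i} = -\val_{E_d}(x_i)$. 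This is really just the special case of Lemma \ref{lem: homogenisation Cox}'s formula for $n$ applied to $f = x_i$.

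Next, for the cluster variables: by Formula \eqref{formula lifting variables}, $\lif x_j = \widetilde x_j$ for $j \in I$ and $\lif x_d = \sigma_d = X_d$ for $d \in D$, where $\widetilde x_j$ is the element furnished by Corollary \ref{cor:cox, to compare Leclerc}. Applying Lemma \ref{lem: homogenisation Cox} with $f = x_j$, we get that the associated degree is $n = (-\val_{E_d}(x_j))_{d\in D} = \nu_{\bullet j}$, and that $\widetilde x_j$ is the rational function $x_j$ itself, viewed as a global section of $\Orb(\sum_{d\in D} \nu_{dj} E_d) = L^{\nu_{\bullet j}}$ on $Z$, i.e. an element of $\HH^0(Z, \Orb(\sum_{d\in D}\nu_{dj}E_d))$. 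For $j = d \in D$, by construction $\sigma_d$ is the rational function $1$ regarded as a global section of $\Orb(E_d)$, i.e. $\lif x_d = 1 \in \HH^0(Z, \Orb(E_d))$. Assembling these two cases gives the displayed piecewise formula for $\lif x_i$.

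There is essentially no obstacle here: the corollary is a direct bookkeeping consequence of the preceding Lemma \ref{lem: homogenisation Cox} together with Definitions \ref{def: minimal lift matrix} and \ref{def: minimal mon lifting}. The only point requiring a word of care is keeping straight the identification \eqref{eq: X(T) e Pic(X)} between $\ZZ^D$, $X(T)$ and $\Pic(Z)$, so that "degree $\varpi_{\nu_{\bullet j}}$ semi-invariant on $\lX_L$" is correctly translated into "global section of the line bundle $\Orb(\sum_d \nu_{dj}E_d)$ on $Z$" via \eqref{eq: semi-inv characteristic space}; this is exactly what the final sentence of the proof of Lemma \ref{lem: homogenisation Cox} records, so I would simply cite it. Hence the proof is a short paragraph invoking Lemma \ref{lem: homogenisation Cox} once for each $i \in I$ and reading off the $d \in D$ case from the definition of $\sigma_d$.
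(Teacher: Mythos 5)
Your proposal is correct and follows exactly the paper's route: the paper proves this corollary by citing Lemma \ref{lem: homogenisation Cox} together with the definition of the seed $\lif t^D$, which is precisely what you do (your unwinding of $1 \otimes x_i = p^*(x_i)$ and Lemma \ref{lem:divisors rel spec} merely spells out the details already contained in the proof of Lemma \ref{lem: homogenisation Cox}). No gaps; it is just a more explicit version of the same argument.
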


\begin{proof}
    This is a direct consequence of Lemma \ref{lem: homogenisation Cox} and the definition of the seed $\lif t^D.$
\end{proof}

\subsection{Some Examples}
\label{sec: some examples}
We consider some applications of the previous constructions, that also provide some instructive explicit examples. 
\subsubsection{The projective space}

Let $\PP^n$ be the complex projective space of dimension $n \in \NN$, with homogeneous coordinates $[Z_0 : \dots : Z_n]$. 
Let $Y \subseteq \PP^n$ be the open subset defined by the non-vanishing of $Z_0$, and set $E_0:= \divv(Z_0)$. 
The open set $Y$ is an affine space of dimension $n$, with coordinates 
$$
z_i:= \displaystyle \frac{Z_i}{Z_0} \qquad (1 \leq i \leq n).
$$
The pair $(Y , \PP^n)$ satisfies the assumptions of Section \ref{sec: case of Cox}. We clearly have that $(\PP^n, \Orb(1), Z_0)$ is the standard homogeneously suitable for lifting structure of $\Cox(\PP^n)$ corresponding to the open subset $Y$. In this case, we have that $D= \{0\}.$

The Cox sheaf and the Cox ring of $\PP^n$ are respectively 
$$
\Rc_{\Orb(1)}= \bigoplus_{k \in \ZZ} \Orb(k) \quad \text{and} \quad 
\Cox(Z)= \bigoplus_{k \in \NN } \CC[Z_0, \dots, Z_n]_k, 
$$
where $\CC[Z_0, \dots, Z_n]_k$ denotes the space of homogeneous polynomials of degree $k.$
The morphism $p: \lX_{ \Orb(1)}= \Specbf(\Rc_{\Orb(1)}) \longto \PP^n$ is the tautological $\GG_m$-bundle $\AAA^{n+1} \setminus \{0\} \longto \PP^n.$
If $f \in \Orb(Y)= \CC[z_1, \dots , z_n]$ is a polynomial of degree $m \in \NN$, then
$$
\val_{E_0}(f)= - m \quad \text{and} \quad  
\widetilde f= Z_0^m f \biggl( \displaystyle \frac{Z_1}{Z_0}, \dots , \displaystyle \frac{Z_1}{Z_0} \biggr),
$$
where $\widetilde f$ is defined by Corollary  \ref{cor:cox, to compare Leclerc}. 
The homogeneous polynomial $\widetilde f$ is the well known homogenization of the polynomial $f$ with respect to the variable $Z_0$.

\begin{example}
\label{ex: elementary proj}
    The affine space $Y$ has a trivial cluster structure associated to the seed $t$ with $n$ highly frozen cluster variables $z_1, \dots ,z_n$ and no other variable. 
    That is: $\uclu(t)= \CC[z_1 , \dots , z_n]$.
    For any $1 \leq i \leq n$, we have that $\val_{E_0}(z_i)=-1$.
    Hence, the minimal lifting matrix of the seed $t$ with respect to $(\PP^n, \Orb(1), Z_0)$ is 
    $$
    \nu= (1, \dots , 1),
    $$
    with obvious notation. The minimal monomial lifting, in this case, is the seed $\lif t^0$ which has $(n+1)$-frozen variables $Z_0, Z_1, \dots, Z_n$.
    In particular, we have that 
    $$
    \uclu(\lif t^0)= \Orb_{\lX_{\Orb(1)}}(\lX_{\Orb(1)})= \Cox(\PP^n).
    $$
\end{example}

\begin{example}
    This less trivial example is inspired from \cite[Section 7.1]{geiss2013factorial}. Consider the seed $t$ of the field $\CC(z_1, \dots , z_n)$ graphically represented by the following quiver 
    \[\begin{tikzcd}
	{ \bigcirc 1} & { \bigcirc 2} & \cdots & { \bigcirc (n-1)} & { \blacksquare n},
	\arrow[from=1-2, to=1-1]
	\arrow[from=1-3, to=1-2]
	\arrow[from=1-4, to=1-3]
	\arrow[from=1-5, to=1-4]
\end{tikzcd}\]
and with cluster variables $x=(x_1, \dots , x_n)$ defined as follows. We introduce the auxiliary notation $x_{-1}:= 0$ and $ x_{0}:= 1$, and  we set
$$
x_{k+1}:=z_{k+1}x_k-x_{k-1} \quad \text{for} \quad k \geq 0.
$$
     The element $x_{k} \, (1 \leq k \leq n)$ is a degree $k$-polynomial in $z_1, \dots , z_k$, and we have that $\uclu(t)= \CC[z_1, \dots, z_n].$
      The minimal lifting matrix of the seed $t$ with respect to $(\PP^n, \Orb(1), Z_0)$ is 
    $$
    \nu= (1, 2, \dots, n).
    $$
    By direct calculation, the seed $\lif t^0$ is graphically represented by the following quiver
    \[\begin{tikzcd}
	&& {0 \blacksquare } \\
	{1 \bigcirc} & {2 \bigcirc} & \cdots & {(n-1) \bigcirc} & {n \blacksquare}
	\arrow[from=2-2, to=2-1]
	\arrow[from=2-3, to=2-2]
	\arrow[from=2-4, to=2-3]
	\arrow[from=2-5, to=2-4]
	\arrow[shift left, from=2-1, to=1-3]
	\arrow[from=2-1, to=1-3]
	\arrow[from=2-2, to=1-3]
	\arrow[shift left, from=2-2, to=1-3]
	\arrow[from=2-4, to=1-3]
	\arrow[shift right, from=2-4, to=1-3]
\end{tikzcd}.\]
Moreover, we have that $\lif x_0=Z_0$ and $\lif x_1= Z_1$. 
In general, the variable $\lif x_k$ $(1 \leq k \leq n)$ is a homogeneous polynomial of degree $k$.
By direct calculation, we have that $\mu_k(\lif x)_k=Z_{k+1} \, (1 \leq k \leq n-1)$. 
In particular, we have that 
$$
\clu(\lif t^0)= \uclu(\lif t^0)= \CC[Z_0, Z_1, \dots, Z_n]= \Cox(\PP^n).
$$
\end{example}

\subsubsection{Toric varieties}

The following discussion generalises Example \ref{ex: elementary proj} in the context of toric varieties.

Let $Z$ be the toric variety determined by a fan $\Sigma$ in a lattice $N$ of rank $n$. 
We assume that $Z$ is smooth.
We recall that, by \cite[Section 2.1]{fultonToric}, the smoothness of $Z$ is equivalent to the fact that any cone of $\Sigma$ is generated by a subset of a base of the $\ZZ$-module $N$.
We denote by $\Sigma(1)$ the set of dimension one cones of $\Sigma$. 
Let $\rho_1, \dots, \rho_m$ be an enumeration of $\Sigma(1)$, and denote by $v_k$ the generator of $\rho_k \cap N$ ($1 \leq k \leq m $).
Assume furthermore that $\Sigma$ contains a cone of maximal dimension.
Without loss of generality, we can assume that the cone $\sigma$ generated by the elements $v_1, \dots, v_n$ belongs to $\Sigma$.
Therefore, the affine toric variety $Y$ determined by $\sigma$, which is an affine space of dimension $n$, is naturally an open subset of $Z$.

\bigskip

Let $\CC[x_{\rho_1}, \dots ,x_{\rho_m}]$ be  a polynomial ring in the independent variables $x_{\rho_k}$ ($1 \leq k \leq m$). 
By a well known result of Cox \cite{cox1995homogeneous}, any choice of a set of invariant divisors of $Z$ freely generating $\Pic(Z)$  determines an isomorphism

\begin{equation}
    \label{eq: cox toric}
    \Cox(Z) \simeq \CC[x_{\rho_1}, \dots ,x_{\rho_m}].
\end{equation}
The previous isomorphism is described by \cite[Proposition 1.1]{cox1995homogeneous}.
By Lemma \ref{lem:pic(Z) with nice open}, the divisors corresponding to the irreducible components of $Z \setminus Y$ freely generate $\Pic(Z)$.
From now on, we fix the isomorphism between the rings $\Cox(Z)$ and $\CC[x_{\rho_1}, \dots ,x_{\rho_m}]$ determined by the previously mentioned set of divisors and, with little abuse of notation, we write $\Cox(Z)= \CC[x_{\rho_1}, \dots ,x_{\rho_m}].$
We claim that the variables $x_{\rho_i}$ are the cluster variables of the minimal monomial lifting of the trivial cluster structure on $Y$.

\bigskip

Let $M:= \Hom_\ZZ(N,\ZZ)$ be the dual lattice of $N$, and let $(v_1^*, \dots, v_n^*)$ be the dual base of $(v_1, \dots , v_n)$. 
We denote by $\langle -, - \rangle: M \times N \longto \ZZ$ the natural pairing.
The lattice $M$ is canonically identified with the character group of the torus acting on $Z$. 
The character associated to an element $m \in M$ is denoted by $\chi^m.$
Then, we have that 
$$
\Orb_Y(Y) = \CC[ \chi^{v_1^*}, \dots , \chi^{v_n^*}].
$$
In particular, $\Orb_Y(Y)$ is the upper cluster algebra of the seed $t$ with $n$ highly frozen variables $\chi^{v_1^*}, \dots , \chi^{v_n^*}$, and no other variable.
For $1 \leq k \leq m$, we denote by $E_k$ the invariant prime divisor of $Z$ determined by $\rho_k$, as considered for example in \cite[Section 3.3]{fultonToric}.
Then, we have that 
$$
Z \setminus Y= \bigcup_{k=n+1}^n E_k.
$$
Let 
$ D:= \{ n+1, \dots, m\}$, $L:= \Orb(E_d)_{d \in D}$ and, for $d \in D$, let $\sigma_d$ be the rational function $1$ seen as a global section of $L_d$. 
The triple $(\lX_L, L, (\sigma_d)_{d \in D})$ is the standard homogeneously suitable for lifting structure of $\Cox(Z)$ corresponding to $Y$.

Let $\lif t^D$ be the minimal monomial lifting of $t$ with respect to $(\lX_L, L, (\sigma_d)_{d \in D})$. 
Then, the upper cluster algebra $\uclu(\lif t^D)$ is a polynomial ring in the variables $\lif \chi^{v_1^*}, \dots, \lif \chi^{v_n^*}, \sigma_{n+1}, \dots , \sigma_m$. 
 For $d \in D$, because of  \cite[Proposition 1.1]{cox1995homogeneous} (and its proof), we have that the element  $x_{\rho_d} $ is the rational function 1 seen as a global section of $\Orb(E_d)$.
Therefore, we clearly have that $x_{\rho_d}=\sigma_d$.
Moreover, for $1 \leq k \leq n$, the element $x_{\rho_k}$ belongs to
\begin{center}
$
 \HH^0\bigl(Z, \, \Orb\bigl( \sum_{d \in D} -\langle v_k^*, v_d \rangle E_d \bigr)\bigr)
$
\end{center}
and corresponds to the rational function $\chi^{v_k^*}$.
In particular, recalling the definition of the map $\iota^*: \Cox(Z)= \Orb_{\lX_L}(\lX_L) \longto \Orb_Y(Y)$ (see the notation of Section \ref{sec: suitable lift schemes}), we have that 
$$
\iota^*(x_{\rho_k})= \chi^{v_k^*}.
$$
But, a standard calculation implies that 
$$
\val_{E_d}\big( \chi^{v_k^*} \bigr)= \langle v_k^*, v_d \rangle \qquad(d \in D, \, 1 \leq k \leq n).
$$
Therefore, the previous discussion together with Lemma \ref{lem: pole filtration} implies that 
$$
\lif \chi^{v_k^*}= x_{\rho_k} \qquad (1 \leq k \leq n).
$$
This completes the proof of the claim. In particular, it follows by \cite{cox1995homogeneous}, that
$$
\uclu(\lif t^D)= \Cox(Z).
$$

\subsubsection{A Fano surface}
This example is inspired by \cite[Example 8.31]{gross2018canonical}. 
I kindly thank Alfredo N\'ajera Ch\'avez for bringing this example to my attention.

Let $S$ be the graded algebra generated in degree one, over $\CC$, by the variables $\theta_0, \theta_1 , \dots, \theta_5$ satisfying the following relations 
$$\begin{array}{c c c }
  \theta_1\theta_3=\theta_2\theta_0 + \theta_0^2, & 
   \theta_2\theta_4=\theta_3\theta_0 + \theta_0^2,& 
    \theta_3\theta_5=\theta_4\theta_0 + \theta_0^2,\\
    \theta_4\theta_1=\theta_5\theta_0 + \theta_0^2, & 
    \theta_5\theta_2=\theta_1\theta_0 + \theta_0^2. &
\end{array}$$
Let $Z:=\Proj(S)$.
This is a smooth Fano surface endowed with a closed embedding $Z \longto \PP^5$ defined by the elements $\theta_i$.
Moreover, let $Y:=Z_{\theta_0}$ be the non-vanishing locus of $\theta_0$.
The open subset $Y$ of $Z$ is a smooth affine variety. 
Setting 
$$
x_i:= \frac{\theta_i}{\theta_0} \qquad (1 \leq i \leq 5),
$$
we have that the algebra $\Orb(Y)$ is presented, over $\CC$, by the elements $x_i$ satisfying the following set of relations
$$\begin{array}{c c c }
  x_1x_3=x_2+1 & 
   x_2x_4=x_3+1 &  
    x_3x_5=x_4+1 \\
     x_4x_1=x_5+1 & 
      x_5x_2=x_1+1. &
\end{array}$$
The ring $\Orb(Y)$ is an upper cluster algebra of type $A_2$. An initial seed $t$ is graphically represented by the quiver
\[\begin{tikzcd}
	{1 \bigcirc} && {2 \bigcirc}
	\arrow[from=1-1, to=1-3]
\end{tikzcd}\]
and has $x_1$ and $x_2$ as initial cluster variables. 
The set of cluster variables of the upper cluster algebra $\uclu(t)$ is finite, and they are the functions $x_1, \dots , x_5.$
Moreover, the ring $\Orb(Y)$ is factorial. 
An immediate way for showing this is using   \cite[Theorem 4.9]{cao2022valuation}.
Moreover, we have that $\Orb(Y)^\times=\CC^\times$ because of Theorem \ref{thm: inv upper}.
Therefore the pair $(Y, Z)$ satisfies the assumptions of Section \ref{sec: case of Cox}.  
We have that $Z \setminus Y= \bigcup_{k=1}^5 E_k$, where the five prime divisors $E_i$ are
$$
E_1:= \Proj\big(S / (\theta_1, \theta_2, \theta_3)\big) \quad E_2:=\Proj\big(S / (\theta_2, \theta_3, \theta_4)\big) \quad \dots \quad  E_5:=\Proj\big(S / (\theta_5, \theta_1, \theta_2)\big).
$$
A simple local computation allows to prove that the minimal lifting matrix $\nu$ is, with obvious notation:

$$\nu= \bmat 0 & -1\\
1 & 0\\
1 & 1\\
0 & 1\\
-1 & 0
\emat, \quad \text{and therefore} \quad 
-\nu B= \bmat -1 & 0\\
0 & -1\\
1 & -1\\
1 & 0\\
0 & 1
\emat. 
$$
In particular, the seed $\lif t^D$ is represented by the following quiver 
\[\begin{tikzcd}
	{E_4 \blacksquare} &&&& {E_2 \blacksquare} \\
	& {1 \bigcirc} && {2 \bigcirc} \\
	{E_1 \blacksquare} && {E_3 \blacksquare} && {E_5 \blacksquare}
	\arrow[from=2-2, to=2-4]
	\arrow[from=1-1, to=2-2]
	\arrow[from=2-2, to=3-1]
	\arrow[from=2-4, to=1-5]
	\arrow[from=3-5, to=2-4]
	\arrow[from=2-4, to=3-3]
	\arrow[from=3-3, to=2-2]
\end{tikzcd}\]
and has initial mutable cluster variables 
$$
\lif x_1= x_1 \frac{\sigma_2 \sigma_3}{\sigma_5} \quad \lif x_2= x_2 \frac{\sigma_3 \sigma_4}{\sigma_1}
$$
where the $\sigma_i$ are the sections of definition of the standard  homogeneously suitable for lifting structure of $\Cox(Z)$ corresponding to the open subset $Y$.
Note that $\uclu(\lif t^D)$ is the upper cluster algebra of type $A_2$ with universal coefficients.

The cluster algebra $\clu(\lif t^D)$ has $5$ mutable cluster variables, that we denote, with obvious notation, by $\lif x_i$ ($1 \leq i \leq 5$). 
Moreover, we have that
\begin{center}
$
\lif x_i \in \HH^0\bigl( Z , \, \Orb\bigl(E_{i+1} + E_{i+2}- E_{i+4} \bigr) \bigr) \qquad (1 \leq i \leq 5),
$
\end{center}
where in the previous formula, the indices of the $E_j$ have to be intended modulo $5.$

\section{Partial flag varieties}
\label{sec: flag varieties}

We need to introduce some notation and classical results on reductive groups.
Let $G$ be a complex semisimple simply connected algebraic group, $B$ and $B^-$ be opposite Borel subgroups, $U$ and $U^-$ be the unipotent radicals of $B$ and $B^-$ respectively. 
Consider the maximal torus of $G$
$$
H:= B \cap B^-.
$$ 
We denote by $\Phi$ the root system of $G$, with its set of simple roots $\Delta$ determined by $B$.
The notation $\Phi^+$ (resp. $\Phi^-$) stands for the set of positive (resp. negative) roots.
We denote by $\varpi_\alpha \, (\alpha \in \Delta)$ the fundamental weights.
Let $W$ be the Weyl group of $G$, with its Coxeter generators $s_\alpha \, (\alpha \in \Delta)$.
We denote by $w_0$ the longest element of $W$.
For an element $w \in W$, we consider the unipotent algebraic groups 
\begin{equation}
    \label{eq: U(w)}
    U(w):= U \cap w\inv U^- w ,\qquad U^-(w):= U^- \cap w\inv U w.
\end{equation}

From now on, we fix a set $K \subsetneq \Delta$ and we define $J:= \Delta \setminus K$. 
Let $P$ (resp. $P^-$) be the parabolic subgroup of $G$ generated by $B$ (resp. $B^-$) and by the root groups $U(s_\alpha)$ (resp. $U^-({s_\alpha})$) for $\alpha \in K$. 
 Let 
 $$
 Z:= G/P \qquad Z^-:= P^- \setminus G
 $$
 be the \textit{partial flag varieties} of $G$ associated to $P$ and $P^-$. 
 These are smooth projective varieties, which are homogeneous spaces under the natural action of $G$ by left and right multiplication respectively.

\bigskip

Let 
$$
\Phi_K:= \Phi \cap \bigoplus_{\alpha \in K} \ZZ \alpha \quad \text{and} \quad W_K:= \langle s_k \, : \, k \in K\rangle \subseteq W.
$$  
We denote by $w_{0,K}$  the longest element of $W_K$ and we consider (pay attention the minus signs exponents)
$$
z=z_K:= w_0 w_{0,K}, \qquad Y:= U^-(Z), \qquad Y^-:=U(z).
$$ 
The quotient maps $G \longto Z$ and $G\longto Z^-$ identify $Y$ with the open $B^-$-orbit of $Z$ and $Y^-$ with the open $B$-orbit of $Z^-$ respectively.
As $Y$ and $Y^-$ are affine spaces, the pairs $(Y,Z)$ and $(Y^-,Z^-)$ satisfy the assumptions of Section \ref{sec: case of Cox}. 
Therefore, Lemma \ref{lem: min mon cox affine} implies that we can lift any cluster structure of maximal rank on the rings $\Orb_Y(Y)$ and $\Orb_{Y^-}(Y^-)$ inside the rings $\Cox(Z)$ and $\Cox(Z^-)$ respectively.

\bigskip

In order to compare the cluster algebras obtained by minimal monomial lifting with the ones constructed in \cite[Section 10]{geiss2008partial}, we need to take a closer look at this situation.
We start by describing $\Pic(Z)$ and the standard homogeneously suitable for lifting structure of $\Cox(Z)$ corresponding to the open subset $Y$.

\bigskip

It is well known that the restriction map $X(P) \longto X(H)$ is injective and its image is the lattice $\bigoplus_{j \in J} \ZZ \varpi_j.$ 
Considering the fundamental weights $\varpi_j \, (j \in J)$ as  characters of $P$, allows to introduce the subgroup $P_1$ of $P$ defined by
$$
P_1:= \bigcap_{j \in J} \ker( \varpi_j ).
$$
  Then, the group
 $$
 T:=P/P_1
 $$
 is a torus, and the natural projection $\pi: P \longto T$ induces an isomorphism between the groups $X(T)$ and $X(P)$.
 From now on, we freely identify the groups $X(T), X(P)$ and  $\bigoplus_{j \in J} \ZZ \varpi_j$ as described above. 
 Let 
 $$
 \lX:= G/P_1,
 $$
 and $p:\lX \longto Z$ be the natural projection. Since $P$ normalizes $P_1$, 
  the torus $T$ naturally acts on $\lX$ by means of the following formula
  $$
  t \cdot (g P_1/P_1):= gt P_1/P_1 \qquad (g \in G, \, t \in T).
  $$
  Moreover, the map $p$ is a Zarisky locally trivial $T$-bundle. 
  The following lemma should be well known to specialists (see for example \cite[Section 4]{arzhantsev2009factoriality}), we add a proof for completeness and to recall some important aspects of it.

\begin{lemma}
    \label{lem:cox GP} The following hold.
    \begin{enumerate}
        \item We have a canonical isomorphism $  \Pic(Z) \simeq X(T)=X(P)$.
        \item The $T$-bundle $p: \lX \longto Z$ is the canonical morphism of the relative spectrum of the Cox sheaf of $Z$.
    \end{enumerate}
\end{lemma}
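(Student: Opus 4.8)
The plan is to prove both assertions via the standard homogeneous-space dictionary, which is exactly the viewpoint of \cite[Section 4]{arzhantsev2009factoriality}; the conceptual content is classical, so the real work is only in matching sign conventions. First I would fix notation for the associated line bundles: for $\lambda \in X(P)$ let $\mathcal{L}_\lambda := G \times^{P} \CC_{-\lambda}$ be the $G$-equivariant line bundle on $Z = G/P$ attached to $\lambda$, normalised so that $\mathcal{L}_\lambda$ is globally generated when $\lambda$ is dominant.

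For part (1) I would run the usual two steps. \emph{Step 1}: $G$-equivariant line bundles on the homogeneous space $G/P$ are classified by the characters of the isotropy group, so $\lambda \mapsto [\mathcal{L}_\lambda]$ is an isomorphism $X(P) \to \Pic^G(Z)$. \emph{Step 2}: since $G$ is semisimple and simply connected we have $X(G) = 0$ and $\Pic(G) = 0$; pulling a line bundle on $Z$ back along $G \to Z$ yields a trivial bundle on $G$, and the resulting linearisation is unique because $X(G)=0$, so $\Pic^G(Z) = \Pic(Z)$. Combining gives $\Pic(Z) \simeq X(P)$. Finally, $\pi^*: X(T) \to X(P)$ is injective because $\pi : P \to T$ is surjective, and surjective because every character of $P$ is a $\ZZ$-combination of the $\varpi_j$ ($j \in J$) — the stated description of the image of $X(P) \hookrightarrow X(H)$ — and each $\varpi_j$ vanishes on $P_1 = \bigcap_{j \in J}\ker(\varpi_j)$, hence factors through $T = P/P_1$. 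So $X(T) = X(P)$, and moreover $\{[\mathcal{L}_{\varpi_j}]\}_{j \in J}$ is a $\ZZ$-basis of $\Pic(Z)$.

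For part (2), the point is that $p : \lX = G/P_1 \to G/P = Z$ is a torsor under $T = P/P_1$: it is Zariski-locally trivial (already noted before the lemma) and, $T$ being affine, $p$ is affine. I would decompose the pushforward into $T$-weight subsheaves, $p_*\Orb_\lX = \bigoplus_{\chi \in X(T)} \mathcal{M}_\chi$; over a trivialising open $U \subseteq Z$ one has $p^{-1}(U) \cong T \times U$ and $\mathcal{M}_\chi(U) = \chi \otimes \Orb_Z(U)$, so each $\mathcal{M}_\chi$ is invertible, the multiplications $\mathcal{M}_\chi \otimes \mathcal{M}_{\chi'} \to \mathcal{M}_{\chi + \chi'}$ are isomorphisms, and $\mathcal{M}_\chi$ is canonically the associated bundle $\lX \times^{T} \CC_{-\chi} = G \times^{P} \CC_{-\chi} = \mathcal{L}_\chi$ of part (1). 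Taking $L := (\mathcal{M}_{\varpi_j})_{j \in J} = (\mathcal{L}_{\varpi_j})_{j \in J}$, whose classes freely generate $\Pic(Z)$ by part (1), the $\ZZ^J$-graded sheaf of $\Orb_Z$-algebras $p_*\Orb_\lX = \bigoplus_{n \in \ZZ^J} L^n$ is precisely $\Rc_L$, hence by Definition \ref{def:cox sheaf} it is the Cox sheaf of $Z$; and since $p$ is affine, $\lX \simeq \Specbf(p_*\Orb_\lX) = \Specbf(\Rc_L)$ over $Z$, i.e. $p$ is the canonical morphism of the relative spectrum of the Cox sheaf. Uniqueness up to isomorphism of the Cox sheaf (cited after Definition \ref{def:cox sheaf}) makes this independent of the auxiliary choices.

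I expect the only mildly delicate point to be purely bookkeeping: one must fix once and for all the sign in $\mathcal{L}_\lambda = G \times^{P} \CC_{-\lambda}$ together with the weight convention for $T$ acting on functions, and then check that the three identifications $\ZZ^J \simeq X(T)$, $X(T) \simeq X(P)$ and $X(P) \simeq \Pic(Z)$ compose to the identification $n \mapsto [L^n]$ built into the definition of the Cox sheaf and into \eqref{eq: X(T) e Pic(X)}. Everything else — affineness and local triviality of torsors, the weight decomposition of $p_*\Orb_\lX$, the identification of weight spaces with associated bundles — is routine.
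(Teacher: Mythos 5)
Your proposal is correct and follows essentially the same route as the paper: both identify $\Pic(Z)\simeq X(P)=X(T)$ via the associated bundles $\Li_\lambda=G\times_P\CC_{-\lambda}$ (the paper simply cites this as classical, where you fill in the standard $\Pic^G(Z)=\Pic(Z)$ argument using $X(G)=\Pic(G)=0$), and both prove part (2) by identifying the $X(T)$-weight decomposition of $p_*\Orb_\lX$ with $\bigoplus_\lambda \Li_\lambda$ and concluding from affineness of $p$ via the universal property of the relative spectrum. The only cosmetic difference is that the paper constructs the isomorphism $\Li_\lambda\simeq (p_*\Orb_\lX)^{(T)}_\lambda$ explicitly from a nowhere-vanishing section of $p^*\Li_\lambda$, whereas you decompose the pushforward first and then match each weight subsheaf with the associated bundle; your sign conventions are consistent with the paper's.
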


\begin{proof}
    For $\lambda \in X(P)$, we consider the one dimensional $P$-representation  $\CC_{-\lambda}$ on which $P$ acts via the character $-\lambda$. Consider the $P$-action on $G \times \CC_{-\lambda}$ defined by 
    $$
    p\cdot (g,v)=\big(g p\inv, (-\lambda)(p)v\big) \qquad (p \in P, \, g \in G, \, v \in \CC_{-\lambda}).  
    $$
    It is a standard fact that the geometric quotient for this action exists and it is denoted by 
    \begin{equation}
    \label{eq: L lambda}
          G \times_P \CC_{-\lambda}=:\Li_\lambda.
    \end{equation} 
    The class of an element $(g,v) \in G \times \CC_{-\lambda}$ is denoted by $[g:v] \in \Li_\lambda$.
    The map $\Li_\lambda \longto Z$ defined by $[g : v] \longto gP/P$ turns $\Li_\lambda$ into a line bundle on $Z$.
    As $G$ is simply connected, it is then a classical fact that this construction gives an isomorphim $X(P) \simeq \Pic(Z)$.
    See for example \cite[Section 4]{arzhantsev2009factoriality}.

    \bigskip
    
   Note that, for $\lambda \in X(P)$, the line bundle $p^*\Li_\lambda$ is trivial.
   Indeed, since the character $\lambda$ is trivial on $P_1$, we have a morphism $\lX \longto \Li_\lambda $ of schemes over $Z$ defined by 
   $$
   gP_1/P_1 \longto [g : 1_{-\lambda}],
   $$ 
   where $1_{-\lambda}$ is the element $1$ of the representation $\CC_{-\lambda}$. 
   The induced morphism $\lX \longto p^* \Li_\lambda$
   is a never vanishing global section of $p^*\Li_\lambda$.
  Moreover, we have an isomorphism of $\Orb_Z$-modules 
   \begin{equation}
       \label{eq:Cox GP 0}
       \Li_\lambda \simeq \bigl(p_* \Orb_{\lX}\bigr)^{(T)}_\lambda
   \end{equation}
  
which, for any open subset $U$ of $Z$, sends a section $\sigma \in \Li_\lambda(U)$ to the semi-invariant function $f_\sigma \in \Orb_{\lX}(p\inv(U))$ defined by
$$
\sigma(gP/P)= [g : f_\sigma(gP_1/P_1) 1_{-\lambda}]  \qquad \big( (gP_1/P_1 \in p\inv(U) \big).
$$
   Note that, for any $\lambda, \mu \in X(P)$, we have an isomorphism $\Li_{\lambda + \mu} \simeq \Li_\lambda \otimes \Li_\mu$ induced by the isomorphism of $P$-modules $\CC_{-\lambda} \otimes \CC_{- \mu} \simeq \CC_{-(\lambda +\mu)}$ sending $1_{-\lambda}\otimes 1_{-\mu}$ to $1_{-(\lambda +\mu)}$. 
  In particular, the sheaf 
  $$
   \Rc := \bigoplus_{\lambda \in X(P)} \Li_\lambda
  $$
  has a natural structure of $X(P)= \Pic(Z)$-graded $\Orb_Z$-algebra, and we have an isomorphism of $\Pic(Z)$-graded $\Orb_Z$-algebras
   \begin{equation}
       \label{eq: R pistar GP}
        \Rc \simeq p_* \Orb_{\lX}.
   \end{equation}
  Isomorphism \eqref{eq: R pistar GP} is defined on the homogeneous components by \eqref{eq:Cox GP 0}.
By the first statement of the lemma, $\Rc$ is the Cox sheaf of $Z$.
  Since the morphism $p: \lX \longto Z$ is affine, Isomorphism \eqref{eq: R pistar GP} corresponds to an isomorphism $\lX \simeq \Specbf(\Rc)$ of schemes over $Z$. 
  This follows in fact from the universal property of the relative spectrum of a quasi-coherent sheaf of $\Orb_Z$-algebras.
\end{proof}
 For  $j \in J$, we denote by $E_j:=\overline{ B^- s_j P/P}$.
We have that the $E_j \, (j \in J)$ are pairwise distinct irreducible closed subsets of $Z$ and moreover 
$$
Z \setminus Y = \bigcup_{j \in J} E_j.
$$ 

Recall that to any simple root $\alpha$, Fomin and Zelevinsky \cite{fomin1999double} associate a function $\Delta_{\varpi_\alpha,\varpi_\alpha} \in \CC[G]$, called \textit{generalized minor}.
For any $j \in J$, the function $\Delta_{\varpi_j,\varpi_j}$ is actually $P_1$-right invariant.
 Therefore, $\Delta_{\varpi_j,\varpi_j}$ descends to a function on $\lX$, that we denote by $\sigma_j$.

\begin{coro}
    \label{coro:standard lifting cox GP}
   Let $\Li:= (\Li_{\varpi_j})_{j \in J} $ and $\sigma:=(\sigma_j)_{j \in J}$. The triple $(\lX, \Li, \sigma)$ is the standard homogeneously suitable for lifting structure of $\Cox(G/P)$ corresponding to the open subset $Y$.
\end{coro}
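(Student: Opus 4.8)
The plan is to show that, after choosing suitable isomorphisms, the triple $(\lX,\Li,\sigma)$ is the very triple furnished by Definition \ref{def:standard suitable lifting Cox} for the pair $(Y,Z)$. First I would record the routine preliminaries. The pair $(Y,Z)$ satisfies the assumptions of Section \ref{sec: case of Cox} because $Y$ is an affine space, as already observed in the text, so the standard homogeneously suitable for lifting structure of $\Cox(G/P)$ attached to $Y$ is defined; by the paragraph preceding the Corollary its index set is $J$, its collection of line bundles is $L=(\Orb(E_j))_{j\in J}$, and its distinguished sections are the constant functions $1\in\Orb(E_j)(Z)$. By Lemma \ref{lem:cox GP}, $p:\lX\to Z$ is the relative spectrum of the Cox sheaf $\Rc=\bigoplus_{\lambda\in X(P)}\Li_\lambda$, with $\Pic(Z)\simeq X(P)$ via $[\Li_\lambda]\leftrightarrow\lambda$; and by Lemma \ref{lem:pic(Z) with nice open} the sheaf $\Rc_L=\bigoplus_n L^n$ is another model of the Cox sheaf. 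Hence the whole statement reduces to producing, for each $j\in J$, an isomorphism $\Li_{\varpi_j}\xrightarrow{\sim}\Orb(E_j)$ sending the section $\sigma_j$ to the constant section $1$: these assemble, via the identification $X(P)\cong\ZZ^J$, $\varpi_j\mapsto e_j$, and the isomorphisms $\Li_{\sum_j n_j\varpi_j}\cong\bigotimes_j\Li_{\varpi_j}^{\otimes n_j}$, into a graded $\Orb_Z$-algebra isomorphism $\Rc\xrightarrow{\sim}\Rc_L$ carrying each $\sigma_j$ to $1$, i.e.\ into an isomorphism of the two suitable for lifting data over $Z$.

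By the isomorphism \eqref{eq:line boundles and div}, to build such an isomorphism $\Li_{\varpi_j}\simeq\Orb(E_j)$ sending $\sigma_j\mapsto\sigma_j/\sigma_j=1$ it suffices to prove that $\sigma_j$ is a global regular section of $\Li_{\varpi_j}$ with $\divv_Z(\sigma_j)=E_j$. That $\sigma_j$ is a global section follows from the constructions in the proof of Lemma \ref{lem:cox GP}: the generalized minor $\Delta_{\varpi_j,\varpi_j}$ is right-$U$-invariant and right-$H$-semi-invariant of weight $\varpi_j$ (for $k\in K$ the root subgroup $U(s_k)$ lies in $U$ and fixes the highest-weight line), hence right-$P$-semi-invariant of weight $\varpi_j$, so its descent $\sigma_j$ to $\lX=G/P_1$ is a $T$-semi-invariant regular function of weight $\varpi_j$, which under \eqref{eq:Cox GP 0} is precisely a global section of $\Li_{\varpi_j}$; being regular, $\divv_Z(\sigma_j)\ge 0$. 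Moreover $\Delta_{\varpi_j,\varpi_j}$ is left-$B^-$-semi-invariant \cite{fomin1999double}, so $\sigma_j$ is a $B^-$-eigensection of the $G$-linearized bundle $\Li_{\varpi_j}$; therefore $\divv_Z(\sigma_j)$ is an effective $B^-$-stable divisor whose support is disjoint from the dense $B^-$-orbit $Y$, and hence is of the form $\sum_{k\in J}a_kE_k$ with all $a_k\ge 0$ and not all zero.

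It remains to show $a_j=1$ and $a_k=0$ for $k\ne j$; this is the technical heart of the argument, amounting to identifying which Schubert divisor $E_k$ is cut out by $\Delta_{\varpi_j,\varpi_j}$ and with what multiplicity. The vanishing $a_k=0$ for $k\ne j$ comes from a direct evaluation of $\sigma_j$, equivalently of its pullback $\Delta_{\varpi_j,\varpi_j}$ to $G$, at a representative of $s_k$, which lies over the generic point $s_kP/P$ of $E_k$: since $s_k\varpi_j=\varpi_j$ one has $\Delta_{\varpi_j,s_k\varpi_j}=\Delta_{\varpi_j,\varpi_j}$ and the nonzero value $\Delta_{\varpi_j,\varpi_j}(e)=1$ is attained there, so $\sigma_j$ does not vanish along $E_k$. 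For the multiplicity $a_j$ one can avoid an explicit local computation: we have now shown $[\Li_{\varpi_j}]=a_j\,[\Orb(E_j)]$ in $\Pic(Z)$ with $a_j\ge 1$, and since $\{\varpi_j\}_{j\in J}$ and $\{[\Orb(E_k)]\}_{k\in J}$ are both $\ZZ$-bases of $\Pic(Z)$ by Lemmas \ref{lem:cox GP} and \ref{lem:pic(Z) with nice open}, the diagonal matrix $\operatorname{diag}(a_k)_{k\in J}$ recording this correspondence must lie in $\GL_{|J|}(\ZZ)$, forcing $a_j=1$; hence $\divv_Z(\sigma_j)=E_j$. (Alternatively, one may invoke the classical identification $\Li_{\varpi_j}\simeq\Orb(E_j)$ for partial flag varieties and then deduce $a_j=1$, $a_k=0$ from the uniqueness of the expression of $[\Orb(E_j)]$ in the basis $\{[\Orb(E_k)]\}_k$.)

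Finally, with the isomorphisms $\Li_{\varpi_j}\simeq\Orb(E_j)$ carrying $\sigma_j$ to $1$ in hand, I would assemble them as in the first paragraph into an isomorphism of $(\lX,\Li,\sigma)$ with the triple $(\lX_L,L,(1)_{j\in J})$, which is by definition the standard homogeneously suitable for lifting structure of $\Cox(G/P)$ corresponding to $Y$; this proves the Corollary. The one genuinely nontrivial input is the divisor identification $\divv_Z(\sigma_j)=E_j$ — i.e.\ pinning down which Schubert divisor the minor $\Delta_{\varpi_j,\varpi_j}$ cuts out and to what order — while everything else is formal bookkeeping with the relative-spectrum constructions of Section \ref{sec:charact space} and with Lemmas \ref{lem:cox GP} and \ref{lem:pic(Z) with nice open}.
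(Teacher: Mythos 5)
Your proposal is correct, and its skeleton matches the paper's: interpret $\sigma_j$ as a $T$-semi-invariant function on $\lX$ of weight $\varpi_j$, hence a global section of $\Li_{\varpi_j}$, establish $\divv_Z(\sigma_j)=E_j$, and conclude by transporting everything through the isomorphism \eqref{eq:line boundles and div}, under which $\sigma_j$ becomes the rational function $1$ viewed in $\Orb(E_j)$, together with the identification $\lX\simeq\Specbf(\Rc)$ from Lemma \ref{lem:cox GP} and the fact that $Z\setminus Y=\bigcup_{j\in J}E_j$. The genuine difference is how the key divisor identity is handled: the paper simply cites \cite[Lemmas 5.2.1, 5.2.3]{francone2023minimal}, whereas you prove it from scratch, and your argument is sound — left $B^-$-semi-invariance plus the fact that $Y$ is the open $B^-$-orbit confines the support of $\divv_Z(\sigma_j)$ to $\bigcup_{k}E_k$; the evaluation $\Delta_{\varpi_j,\varpi_j}(\overline{s_k})=\Delta_{\varpi_j,s_k\varpi_j}(e)=1$ (using that generalized minors depend only on the pair of weights and that $s_k\varpi_j=\varpi_j$ for $k\neq j$, then spreading nonvanishing over $B^-\overline{s_k}P/P$ by right $P$-semi-invariance) kills the off-diagonal multiplicities; and the determinant trick comparing the two $\ZZ$-bases $\{[\Li_{\varpi_j}]\}_{j\in J}$ and $\{[\Orb(E_k)]\}_{k\in J}$ of $\Pic(Z)$ forces $a_j=1$ without any local computation (note this same trick also disposes of your unjustified aside that the $a_k$ are "not all zero", since a zero diagonal entry would make the change-of-basis matrix singular). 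What your route buys is a self-contained proof within the present paper, at the cost of being slightly longer; what the paper's route buys is brevity, and its cited lemmas also record the stronger local information about the minors that is reused elsewhere (e.g. in the comparison with Geiss--Leclerc--Schr\"oer). Your closing assembly of the isomorphisms $\Li_{\varpi_j}\simeq\Orb(E_j)$ into a graded identification of $\Rc$ with $\Rc_L$ is in fact spelled out more explicitly than in the paper, which leaves that bookkeeping implicit.
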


\begin{proof}
   The isomorphism $\lX \simeq \Specbf (\Rc_\Li)$ is established in the proof of Lemma \ref{lem:cox GP}.
  The function $\sigma_j \, (j \in J) $ is semi-invariant for the $T$-action and its weight is $\varpi_j$. 
   This elementary fact can be found for example in  \cite{fomin1999double}.
   Hence, we can interpret $\sigma_j$ has a global section of the line bundle $\Li_{\varpi_j}$.
   Moreover, we have that $\divv(\sigma_j)=E_j$ by 
  \cite[Lemmas 5.2.1, 5.2.3]{francone2023minimal}.
  In particular, under the isomorphism 
$$ 
\begin{array}{rcl}
   \Li_{\varpi_j} & \longto & \Orb(E_j)\\[0.3em]
   \tau & \longmapsto & \displaystyle \frac{\tau}{\sigma_j},
   \end{array}
   $$
   the section $\sigma_j$ corresponds to the rational function $1$, on $Z$, seen as a global section of $\Orb(E_j)$.
   Since $Z \setminus Y= \displaystyle \bigcup_{j \in J} E_j$, the statement follows.
\end{proof}

For a dominant character $\lambda$ of $H$, we denote by $V(\lambda)$ the irreducible $G$-representation of highest weight $\lambda$, and by $V(\lambda)^*$ its dual.
We extend this notation by saying that, for a character $\lambda$ of $H$, then $V(\lambda)= \{0\}= V(\lambda)^*$ is $\lambda$ is not dominant.
 Note that $\Orb_\lX(\lX)$ has a natural structure of $G$-module induced by the formula
$$
(g\cdot f)(x)= f(g\inv x)  \qquad (f \in \Orb_\lX(\lX), \, g \in G,  \, x \in \lX).
$$ 
As a $G$-module, the space $\Orb_\lX(\lX)$ is multiplicity free and its decomposition into irreducible $G$ representations agrees with its decomposition into spaces of semi-invariant functions for the $T$-action.
More precisely, we have that: 

\begin{equation}
    \label{eq:Cox GP 1}
    \begin{array}{l}
\Orb_\lX(\lX) =  \displaystyle \bigoplus_{\lambda \in X(T)}\Orb_\lX(\lX)_\lambda \qquad \text{and} \qquad \Orb_\lX(\lX)_\lambda \simeq V(\lambda)^*.
\end{array}
\end{equation}
Recalling  the Isomorphisms \eqref{eq:Cox GP 0}, the decomposition \eqref{eq:Cox GP 1} is exactly the Borel-Weil theorem.

\bigskip

For  $j \in J$, let $v_j^+ \in V(\varpi_j)$ be a highest weight vector. 
Let $\iota$ be the $G$-equivariant closed embedding defined by
\begin{equation*}
    \begin{array}{rcl}
        \iota : Z & \longto &  \prod_{j \in J}\PP\big(V(\varpi_j)\big)\\[1em]
        gP/P & \longmapsto & \displaystyle \big([g v_j^+]\big)_{j \in J}.
    \end{array}
\end{equation*} 
Similarly, we have a $G$-equivariant closed embedding 
\begin{equation*}
    \begin{array}{rcl}
       \lX & \longto &   \prod_{j \in J} \bigl(V(\varpi_j) \setminus \{0\} \bigr)\\[1em]
       gP_1/P_1 & \longmapsto & \big(gv_j^+\big)_{j \in J}.
    \end{array}
\end{equation*}
 These maps fit into a cartesian diagram:

\begin{equation}
    \label{eq: Cox GP cartesian}
    \begin{tikzcd}
	\lX && {\prod_{j \in J} \bigl(V(\varpi_j) \setminus \{0\} \bigr)} \\
	Z && {\prod_{j \in J}\PP\big(V(\varpi_j)\big)}
	\arrow["\iota", from=2-1, to=2-3]
	\arrow[from=1-1, to=1-3]
	\arrow[from=1-3, to=2-3]
	\arrow["p"', from=1-1, to=2-1]
\end{tikzcd}
\end{equation}

Let $\overline{ \lX} \subseteq \prod_{j \in J} V(\varpi_j)$ be the closure of $\lX$.
We consider $\overline{\lX}$ as a closed subscheme of $\prod_{j \in J} V(\varpi_j)$ with the reduced structure. 
The affine variety $\overline{ \lX}$ has a natural structure of $G$-variety, inherited from the diagonal $G$-action on the space $\prod_j V(\varpi_j)$.
The ring 
$$
\CC[Z]:=\Orb_{\overline{ \lX}}(\overline{ \lX})
$$
is known in the literature as the \textit{multi-homogeneous} coordinate ring of $Z$ coming from the closed embedding $\iota$. 
The scheme $\overline{ \lX}$ is just the affinisation of $\lX$ because of the following lemma, which should be well known to specialists, but for which we add a proof by lack of a precise reference.

\begin{lemma}
    \label{lem: Cox GP and multihom coord ring}
   The restriction map 
    $
   \CC[Z] \longto \Orb_\lX(\lX)
    $
    is an isomorphism.
\end{lemma}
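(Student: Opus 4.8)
The plan is to show that the restriction map $\CC[Z] = \Orb_{\overline{\lX}}(\overline{\lX}) \longto \Orb_\lX(\lX)$ is an isomorphism by proving it is both injective and surjective, using that $\overline{\lX}$ is the affine variety with the \emph{reduced} structure whose underlying set is the closure of $\lX$ in $\prod_{j \in J} V(\varpi_j)$. Injectivity is immediate: $\lX$ is a dense open subset of the integral scheme $\overline{\lX}$ (it is integral because $\lX$ is, being a $T$-bundle over the irreducible $Z$, and $\overline{\lX}$ is reduced and irreducible as the closure of an irreducible set), so a regular function on $\overline{\lX}$ vanishing on $\lX$ vanishes identically. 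The content of the lemma is therefore surjectivity: every regular function on $\lX$ extends to $\overline{\lX}$.

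For surjectivity, the key point is that the boundary $\overline{\lX} \setminus \lX$ has codimension at least $2$ in $\overline{\lX}$, combined with normality of $\overline{\lX}$; then Hartogs' extension theorem (i.e.\ \cite[Proposition 6.3A]{hartshorne2013algebraic} or the algebraic normality-plus-codimension-$2$ argument already invoked in the excerpt's Remark following Definition \ref{def:cox sheaf}) gives $\Orb_\lX(\lX) = \Orb_{\overline{\lX}}(\overline{\lX})$. First I would establish normality of $\overline{\lX}$: by the discussion in Section \ref{sec:charact space}, $\lX = \lX_L$ is smooth and integral, hence normal, and its affinisation $\Spec \Orb_\lX(\lX)$ is normal because $\Orb_\lX(\lX) = \Cox(Z)$ is a normal domain (again established in Section \ref{sec:charact space}); so it suffices to identify $\overline{\lX}$ with this affinisation, which is exactly what the lemma asserts and cannot be assumed in advance. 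To break the circularity, I would instead argue directly: $\overline{\lX}$ is cut out in $\prod_j V(\varpi_j)$ by the $G$-stable ideal generated by the quadratic relations defining the cone over the multi-homogeneous embedding $\iota$, and the cone over a flag variety in its minimal embedding is well known to be normal with rational singularities (a consequence of the Borel--Weil--Bott theorem and standard results of Ramanan--Ramanathan / Ramanathan on Schubert varieties). Alternatively, and more self-containedly, one observes that $\overline{\lX}$ is the $G$-orbit closure of the highest weight vector $(v_j^+)_j$ together with the origin, and apply the classical fact that $\overline{G \cdot v}$ for $v$ a sum of highest weight vectors is normal.

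Then I would verify the codimension estimate: $\overline{\lX} \setminus \lX$ is $G$-stable and $T$-stable, and consists of the points $(w_j)_j$ with some $w_j = 0$; using the cartesian diagram \eqref{eq: Cox GP cartesian} and the fact that $p : \lX \longto Z$ is a $T$-bundle with $Z$ projective, one sees that setting a single coordinate $w_j$ to zero imposes codimension at least $2$ (one dimension from the torus direction $\GG_m \to \{0\}$ and at least one more from the base, since the $\sigma_j$ cut out the \emph{prime} divisors $E_j \subseteq Z$ whose preimages $p^*(E_j)$ are still divisors in $\lX$, so that the intersection of $\{w_j = 0\}$ with $\lX$ being empty forces the boundary piece to drop dimension). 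More precisely, $\overline{\lX} \setminus \lX$ is contained in the union over $j$ of the loci $\{w_j = 0\}$, and each such locus meets $\overline{\lX}$ in a proper closed subset whose generic point is not in $\lX$; a dimension count using $\dim \overline{\lX} = \dim \lX = \dim Z + |J|$ and the structure of the flag variety gives codimension $\geq 2$. With normality and the codimension estimate in hand, Hartogs applies and surjectivity follows, completing the proof.

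\textbf{Main obstacle.} The delicate step is establishing normality of the affine cone $\overline{\lX}$ over the multi-homogeneous embedding of $Z = G/P$: this is where one genuinely uses representation-theoretic input (Borel--Weil, or normality of cones over flag varieties in minimal embeddings) rather than formal manipulation. If one is willing to cite the standard normality of such cones — e.g.\ via \cite{arzhantsev2015cox} or the literature on spherical varieties and multiplicity-free $G$-varieties — the rest is routine; but a fully self-contained argument requires either the Ramanan--Ramanathan Frobenius-splitting results or an explicit verification that the defining quadratic relations generate a radical, prime, integrally closed ideal. A secondary subtlety is the codimension-$\geq 2$ bound, which must be checked $j$ by $j$ and uses that the $E_j$ are genuinely prime divisors of $Z$ (established in Corollary \ref{coro:standard lifting cox GP} via \cite{francone2023minimal}), so that no component of the boundary is a divisor.
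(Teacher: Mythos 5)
Your strategy (injectivity by density of $\lX$ in the reduced, irreducible $\overline{\lX}$; surjectivity by normality of $\overline{\lX}$ plus a codimension-two bound on $\overline{\lX}\setminus\lX$ and Hartogs) is viable, but it is genuinely different from, and heavier than, the paper's argument. The paper needs neither normality of $\overline{\lX}$ nor any boundary estimate: after the same injectivity observation, it uses that $\Orb_\lX(\lX)=\bigoplus_{\lambda}V(\lambda)^*$ (Borel--Weil) is generated as a $\CC$-algebra by the fundamental components $V(\varpi_j)^*$ $(j\in J)$, and that the image of $\CC[Z]\longto\Orb_\lX(\lX)$ contains each $V(\varpi_j)^*$, since these are exactly the restrictions to $\lX$ of the linear forms coming through the surjection $\Sym\bigl(\bigoplus_{j\in J}V(\varpi_j)^*\bigr)\longto\CC[Z]$. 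A subalgebra containing a set of algebra generators is everything, so surjectivity is immediate; the only representation-theoretic input is the classical surjectivity of the Cartan product $V(\lambda)^*\otimes V(\mu)^*\longto V(\lambda+\mu)^*$, which is strictly milder than the normality you must import.

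Two points in your write-up need repair. First, your ``alternative, more self-contained'' justification is false as stated: it is not true that $\overline{G\cdot v}$ is normal for an arbitrary sum of highest weight vectors. By the analysis of such orbit closures in \cite{vinberg1972class}, normality depends on the monoid generated by the weights; for instance, for $G=\SL_2$ and $v=v_{2\varpi}+v_{3\varpi}$ the closure has coordinate ring $\bigoplus_{d\neq 1}\CC[a,b]_d$, which is not integrally closed. What is true, and what your primary route needs, is normality of the multicone over $G/P$ associated to the fundamental weights $\varpi_j$, $j\in J$ (Kempf--Ramanathan, or the criterion of \cite{vinberg1972class}, the monoid $\bigoplus_{j\in J}\NN\varpi_j$ being saturated); note that this statement already encodes the Cartan-product generation the paper uses directly, so your route outsources the same input to a deeper citation rather than avoiding it. Second, your codimension count is too vague to stand as written. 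A clean version: since $\lX$ is closed in $\prod_{j\in J}\bigl(V(\varpi_j)\setminus\{0\}\bigr)$, one has $\overline{\lX}\setminus\lX\subseteq\bigcup_{j\in J}\bigl(\{w_j=0\}\cap\overline{\lX}\bigr)$, and $\{w_j=0\}\cap\overline{\lX}$ projects into the closure of the $G$-orbit of $(v_i^+)_{i\in J\setminus\{j\}}$, whose dimension is $\dim G/P' + |J|-1$ with $P'$ the parabolic associated to $K\cup\{j\}$; since $P\subsetneq P'$ this is at most $\dim\overline{\lX}-2$. With these repairs your proof closes, but the paper's is considerably shorter and more elementary.
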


\begin{proof}
Note that $\lX$ is an open subset of $\overline{\lX}$.
Indeed, this is  a topological consequence of the fact that the map $\lX \longto {\prod_{j \in J} \bigl(V(\varpi_j) \setminus \{0\} \bigr)}$ is a closed embedding and that the space $ {\prod_{j \in J} \bigl(V(\varpi_j) \setminus \{0\} \bigr)} $ is open in $\prod_{j \in J} V(\varpi_j) $. 
Otherwise, one can observe that $\lX$ a single $G$-orbit, and it is therefore open in its closure.
 Recall that $\overline{\lX}$ is, by definition, reduced.
 Then, as $\overline{\lX}$ is irreducible, being the closure of an irreducible space, it follows that the restriction map $\CC[Z] \longto \Orb_\lX(\lX)$ is injective.
 Since $\Orb_\lX(\lX)$ is generated, as a $\CC$-algebra, by the components $V(\varpi_j)^* \, (j \in J)$ of Eq. \eqref{eq:Cox GP 1}, we just need to prove that the $G$-representation $\CC[Z]$ contains a component isomorphic to $V(\varpi_j)^*$, for any $j \in J.$
 
  Note that we have a surjective restriction map
   \begin{equation}
       \label{eq: multi cohord GP}
       \Sym \biggl( \bigoplus_{j \in J} V(\varpi_j)^* \biggr) \longto \CC[Z],
   \end{equation}
   induced by the closed immersion $\overline{\lX} \subseteq \prod_{i \in J} V(\varpi_j)$.
   Here, $\Sym$ denotes the symmetric algebra in its argument.
  Moreover, for any $j \in J$, Eq. \eqref{eq: multi cohord GP} restricts to an injective map of $G$-modules $V(\varpi_j)^* \longto \CC[Z]$.
  The statement follows.
  \end{proof}

We denote by $(-)^T:G \longto G$ the \textit{transpose} map. 
This is an anti-authomorphism of algebraic groups, defined for example in \cite[Formula (2.1)]{fomin1999double}. 
 Let 
 $$
 P^-_1:=P_1^T, \qquad \lX^- := P^-_1 \setminus G.
 $$ 
 The space $\lX^-$ carries a natural action of the torus $T^-:= P^-_1 \setminus P^-$.
 The transpose map induces isomorphisms of algebraic varieties that fit into the following commutative diagram
  \[\begin{tikzcd}
	{Y^-} & {Z^-} & {\lX^-} & T^- \\
	{Y} & Z & \lX & T.
	\arrow["{(-)^T}", from=1-1, to=2-1]
	\arrow[from=1-1, to=1-2]
	\arrow["{(-)^T}", from=1-2, to=2-2]
	\arrow[from=2-1, to=2-2]
	\arrow[from=1-3, to=1-2]
	\arrow[from=2-3, to=2-2]
	\arrow["{(-)^T}", from=1-3, to=2-3]
 \arrow["{(-)^T}", from=1-4, to=2-4]
\end{tikzcd}\]
This simple observation easily allows to use the previous results for the pair $(Y,Z)$ to obtain analogue results for the pair $(Y^-, Z^-)$.
In particular, for $j \in J$, let $\Li_j^-$ (resp. $\sigma_j^-$) the pullback of the line bundle $\Li_j$ (resp. the function $\sigma_j$) along the isomorphism $(-)^T : Z^- \longto Z$ (resp. $\lX^- \longto \lX$). 
We use the notation $\Li^-:= (\Li_j^-)_{j \in J}$ and $ \sigma^-:= (\sigma^-_j)_{j \in J}$.
Then, Corollary \ref{coro:standard lifting cox GP} implies that 
$
(\lX^-, \Li^-, \sigma^-)
$
is the standard homogeneously suitable for lifting structure of $\Cox(Z^-)$ with respect to $Y^-$.

Note that the ring $\Orb_{\lX^-}(\lX^-)$ has a natural structure of $G$-module defined by the formula
$$
(g \cdot f)(x):= f(x g) \qquad (g \in G, \, f \in \Orb_{\lX^-}(\lX^-), \, x \in \lX^-).
$$
Rephrasing \eqref{eq:Cox GP 1}, we have that the spaces of $T^-$ semi-invariant functions on $\lX^-$ are $G$-stable. 
Moreover:

\begin{equation}
    \label{eq:Cox P-G 1}
    \begin{array}{l }
\Orb_{\lX^-}(\lX^-) =  \displaystyle \bigoplus_{\lambda \in X(T^-)} \Orb_{\lX^-}(\lX^-)_\lambda \qquad   \text{and} \qquad \Orb_{\lX^-}(\lX^-)_\lambda  \simeq V(\lambda).
\end{array}
\end{equation}

  Finally, if we denote by $\CC[Z^-]$ the multi-homogeneous coordinate ring of $Z^-$ corresponding to the embedding
$$
Z^- \xlongrightarrow{(-)^T} Z \xlongrightarrow{\iota} {\prod_{j \in J}\PP(V(\varpi_j))},
$$
we have a natural isomorphism, induced by restriction

\begin{equation}
    \label{eq: Cox GP and multihomogeneous}
    \CC[Z^-] \simeq \Orb_{\lX^-}(\lX^-)= \Cox(Z^-).
\end{equation}
We are now in position to discuss cluster structures on the ring $\Cox(Z^-)$.

\bigskip

In \cite{goodearl2021integral}, the authors identify the ring $\Orb_{Y^-}(Y^-)$ with an explicit upper cluster algebra that is also equal to the corresponding cluster algebra. 
Certain seeds of this cluster structure are defined in terms of  reduced expressions  of the Weyl group element $w_0w_{0,K}$. 
The seed corresponding to the reduced expression $\ii$ is denoted by $t_\ii$, and is described in \cite[Section 7]{goodearl2021integral}.
When $G$ is simple and simply laced, this cluster structure on the ring $\Orb_{Y^-}(Y^-)$ had already been constructed, by different methods, in \cite{geiss2011kac}.

By the discussion at the beginning of this section, if $\ii$ is a reduced expression of $w_0w_{0,K}$ and $\lif t_\ii^J$ is the minimal monomial lifting of the seed $t_\ii$ with respect to the standard homogeneously suitable for lifting structure of $\Cox(Z^-)$ corresponding to $Y^-$, we have an inclusion of $\Pic(Z^-)$-graded algebras 
$$
\uclu(\lif t_\ii^J) \subseteq \Cox(Z^-).
$$

\begin{theo}
\label{thm: complete flag}
    Assume that $K= \emptyset$, so that $Z^-= B^- \setminus G$ is the complete flag variety. 
    For any reduced expression $\ii $ of $w_0$, the upper cluster algebra
    $\uclu(\lif t_\ii^J)$ coincides with the ring $\Cox(Z^-)$.

\end{theo}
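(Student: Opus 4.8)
The plan is to make the case $K=\emptyset$ completely explicit, reduce the claimed equality to the valuation criterion of Proposition \ref{prop: equality conditions}, and then invoke the analysis of the base affine space carried out in \cite{francone2023minimal}.

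First I would unwind all the objects. When $K=\emptyset$ we have $P=B$ and $P_1=\bigcap_{\alpha\in\Delta}\ker(\varpi_\alpha)$; since $G$ is simply connected the $\varpi_\alpha$ form a $\ZZ$-basis of $X(B)=X(H)$, so $P_1=[B,B]=U$ and hence $\lX^-=P_1^-\backslash G=U^-\backslash G$ and $T^-=H$. By Lemma \ref{lem:cox GP} applied to $Z^-$, together with \eqref{eq:Cox P-G 1}, the ring $\Cox(Z^-)=\Orb_{\lX^-}(\lX^-)$ is the ring of regular functions on the basic affine space $U^-\backslash G$, with Borel--Weil decomposition $\Cox(Z^-)=\bigoplus_{\lambda}V(\lambda)$ (the sum over dominant weights), graded by $\Pic(Z^-)\simeq X(T^-)=\bigoplus_{j\in J}\ZZ\varpi_j$. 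Moreover $z=w_0w_{0,K}=w_0$, so $Y^-=U(w_0)=U$ is a smooth affine space with $\Orb_{Y^-}(Y^-)=\CC[U]$ a polynomial ring, hence factorial; the frozen sections $\sigma_j^-$ are the generalized minors $\Delta_{\varpi_j,\varpi_j}$ seen on $\lX^-$, and $\val_j$ is the order of vanishing along the prime divisor $\divv_{\lX^-}(\sigma_j^-)$. Since the seed $t_\ii$ of $\CC[U]$ has maximal rank, Lemma \ref{lem: min mon cox affine} applies and already gives the inclusion $\uclu(\lif t_\ii^J)\subseteq\Cox(Z^-)$ of $\Pic(Z^-)$-graded rings, together with the equality $\uclu(\lif t_\ii^J)_{\prod_{j\in J}\sigma_j^-}=\Cox(Z^-)_{\prod_{j\in J}\sigma_j^-}$ after localization.

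Next I would prove the reverse inclusion $\Cox(Z^-)\subseteq\uclu(\lif t_\ii^J)$ through Proposition \ref{prop: equality conditions}: the two rings share the same fraction field, so it is enough to check that for each $j\in J$ the cluster valuation $\cval_j$ attached to the frozen vertex $j$ of $\lif t_\ii^J$ is non-negative on $\Cox(Z^-)$, equivalently that $\cval_j=\val_j$ there. Non-negativity of $\cval_j$ on the subring $\uclu(\lif t_\ii^J)$, and the equality $\cval_j=\val_j$ on it, are immediate: $\lif x_j=\sigma_j^-$ is highly frozen, hence not inverted in any Laurent ring $\Li(t^*)$, while on the open cell $p\inv(Y^-)=\{\prod_{j\in J}\sigma_j^-\neq 0\}$ the remaining cluster variables are units and $\divv_{\lX^-}(\sigma_j^-)$ is cut out by $\sigma_j^-$. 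The actual content is that $\cval_j$ stays non-negative on all of $\bigoplus_\lambda V(\lambda)$, i.e.\ that no global section of a line bundle on $Z^-$ acquires a pole along the center of $\cval_j$.

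To settle this I would use the explicit shape of $\lif t_\ii^J$ together with Borel--Weil. Writing $\ii=(i_1,\dots,i_N)$, the initial cluster variables of $t_\ii$ are the restrictions to $U$ of generalized minors $\Delta_{\varpi_{i_k},\,w_{\leq k}\varpi_{i_k}}$; by Lemma \ref{lem: homogenisation Cox} their minimal lifts, and likewise those of the exchanged variables $\mu_k(x)_k$, are generalized minors seen as sections on $\lX^-$, so $\lif t_\ii^J$ is precisely the ``generalized minor'' seed of $\CC[U^-\backslash G]$. Since $\Cox(Z^-)=\bigoplus_\lambda V(\lambda)$ is generated as a $\CC$-algebra by its fundamental components $V(\varpi_k)$, $k\in J$ --- by the same argument as in the proof of Lemma \ref{lem: Cox GP and multihom coord ring} --- it then suffices to show that $V(\varpi_k)\subseteq\uclu(\lif t_\ii^J)$ for every $k$, i.e.\ to produce enough lifted cluster variables (or cluster monomials) of degree $\varpi_k$ to span the Borel--Weil piece $V(\varpi_k)$. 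This verification --- the cluster-algebraic description of the base affine space --- is carried out in \cite{francone2023minimal} (cf.\ the results used around \cite[Lemmas 5.2.1, 5.2.3]{francone2023minimal}); combined with Proposition \ref{prop: equality conditions} it yields $\uclu(\lif t_\ii^J)=\Cox(Z^-)$. The step I expect to be the main obstacle is exactly this last one: controlling $\cval_j$ on the entire Cox ring rather than only on $\uclu(\lif t_\ii^J)$ or on its localization at $\prod_{j\in J}\sigma_j^-$ --- in representation-theoretic terms, showing that the lifted generalized minors appearing as cluster variables are numerous enough to fill out each fundamental component $V(\varpi_k)$. Everything else (the identification $\lX^-=U^-\backslash G$, the Borel--Weil decomposition, the maximal-rank and coprimality hypotheses of Theorem \ref{thm: min mon lifting}, and the localization statement) is classical or already available from the results recalled above.
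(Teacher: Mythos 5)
Your setup (identifying $P_1=U$, $\lX^-=U^-\backslash G$, $Y^-=U$, invoking Lemma \ref{lem: min mon cox affine} for the inclusion $\uclu(\lif t_\ii^J)\subseteq\Cox(Z^-)$, and reducing the reverse inclusion via Proposition \ref{prop: equality conditions} to showing $V(\varpi_k)\subseteq\uclu(\lif t_\ii^J)$ for all fundamental weights) is sound, and the reduction itself is correct since $\Cox(Z^-)$ is generated by its fundamental Borel--Weil components. But the proposal has a genuine gap exactly at the step you yourself flag as the main obstacle: you never prove that the lifted cluster algebra fills out each $V(\varpi_k)$, and the reference you offer does not cover it. Lemmas 5.2.1 and 5.2.3 of \cite{francone2023minimal} are used in this paper only to establish $\divv(\sigma_j)=E_j$, i.e.\ to identify the frozen sections; they say nothing about the upper cluster algebra of the lifted seed exhausting the coordinate ring of the base affine space. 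The substantive input is \cite[Theorem 8.3.2]{francone2023minimal}, which is precisely the statement that the minimal monomial lifting of $t_\ii$ recovers the full ring in the complete flag case, and the paper's proof consists of citing that theorem together with one further geometric fact you omit entirely: by Vinberg's result, $U^-\backslash G$ is an open subset of its affinization with complement of codimension strictly greater than one. That codimension-two statement is what allows the present theorem about $\Cox(Z^-)=\Orb_{\lX^-}(\lX^-)$ to be read off as a reformulation of the earlier result; without it (or an equivalent argument) the deferral does not go through as stated.

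A secondary inaccuracy: you assert that the minimal lifts of the exchanged variables $\mu_k(x)_k$ are again generalized minors. For the Goodearl--Yakimov/BFZ-type seeds only the initial cluster variables are flag minors of the form $\Delta_{\varpi_{i_k},\,w_{\leq k}\varpi_{i_k}}$; adjacent cluster variables are in general not minors. This does not affect your reduction, since you only need the initial seed, but as written it overstates what is known and should not be relied upon when trying to ``produce enough lifted cluster variables of degree $\varpi_k$''. In short: the skeleton is compatible with the paper's approach, but the proof is incomplete because the decisive containment is delegated to results that do not establish it, and the codimension-two input that makes the delegation to \cite[Theorem 8.3.2]{francone2023minimal} legitimate is missing.
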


\begin{proof}
   By a well known result of \cite{vinberg1972class}, we have that $\lX^-= U^- \setminus G$ is an open subset of its affinization whose complement is of codimension strictly greater than one.
    Therefore, the statement is just a reformulation of \cite[Theorem 8.3.2]{francone2023minimal}. 
\end{proof}

Assume  now  that $G$ is simple and simply laced. 
In \cite{geiss2008partial},  the authors construct a cluster algebra $\clu_J$ as a subalgebra of the ring $\Orb_{Y^-}(Y^-)$. 

\begin{remark}
    This  cluster structure is related  to the one considered in \cite{goodearl2021integral} and \cite{geiss2011kac}, that we were discussing before Theorem \ref{thm: complete flag}, but is different.
We refer to \cite[Remark 17.5]{geiss2011kac} for more details.
\end{remark}

By \cite[Theorem 17.4]{geiss2011kac}, we have that $\clu_J=\Orb_{Y^-}(Y^-)$.
Moreover, the starfish lemma (see for example \cite{fomin2021introduction6} or  \cite[Lemma 4.0.6]{francone2023minimal}) and \cite[Theorem 1.3]{geiss2013factorial} imply that we have equalities
$$
\clu_J=\uclu_J= \Orb_{Y^-}(Y^-),
$$
where $\uclu_J$ is the upper cluster algebra of $\clu_J$.
In \cite[Section 10]{geiss2008partial}, the cluster algebra $\clu_J$ is used to construct a cluster algebra $\widetilde \clu_J$ as a subring of $\CC[Z^-]$.
 The algebra $\widetilde \clu_J$ is obtained by an homogenization procedure \cite[Sections 10.1, 10.2, Theorem 10.2]{geiss2008partial} which, because of the following theorem, is a special issue of minimal monomial lifting.
   
\begin{theo}
    \label{thm:Cox GP geiss e minimal mon lifting}
    Let $t$ be a seed of $\clu_J$, and  $\widetilde t$ be the associated seed of $\widetilde \clu_J$ defined in \cite[Section 10.2]{geiss2008partial}.
    Moreover, let $\lif t^J$ be the minimal monomial lifting of the seed $t$ with respect to the standard homogeneously suitable for lifting structure of $\Cox(Z^-)$ corresponding to the open subset $Y^-$. 
    Through Isomorphism \eqref{eq: Cox GP and multihomogeneous}, we have that 
    $$
    \widetilde t= \lif t^J. 
    $$
\end{theo}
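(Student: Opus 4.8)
The plan is to establish $\widetilde t = \lif t^J$ by matching the two seeds datum by datum. (One could hope to invoke the uniqueness Theorem \ref{thm: uncity min mon lifting}, but its hypothesis $\uclu(\widetilde t) = \Cox(Z^-)$ is not available in general for partial flag varieties, whereas the direct comparison below needs no such input.) First I would note that the minimal monomial lifting $\lif t^J$ is defined here: $Y^-$ is smooth and affine with $\Orb_{Y^-}(Y^-) = \clu_J$ factorial, so Hypotheses 1--3 of Theorem \ref{thm: min mon lifting} hold by Lemma \ref{lem: min mon cox affine}. Using the identification $\CC[Z^-] = \Cox(Z^-)$ of \eqref{eq: Cox GP and multihomogeneous}, both $\widetilde t$ and $\lif t^J$ are seeds of the fraction field of $\Cox(Z^-)$ that $J$-extend $t$: the mutable parts of their quivers are that of $t$, the $I \times I_{uf}$-blocks of the extended exchange matrices are both $B$, and the adjoined frozen variables are, on both sides, the generalized minors $\sigma_j^-$ ($j \in J$); on the lifting side the latter is Formula \eqref{formula lifting variables}, recalling that $(\lX^-, \Li^-, \sigma^-)$ is the standard suitable-for-lifting structure for $\Cox(Z^-)$. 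Since the Geiss--Leclerc--Schr\"oer homogenization commutes with mutation, it then suffices to match the clusters and the $J \times I_{uf}$-block of the extended exchange matrix.

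For the clusters, fix $i \in I$ and write $x_i$ for the corresponding cluster variable of $t$, a regular function on $Y^-$. By \cite[Sections 10.1--10.2]{geiss2008partial}, the element $\widetilde x_i \in \CC[Z^-]$ is the \emph{minimal} homogenization of $x_i$ for the $\ZZ^J$-grading of the multi-homogeneous coordinate ring: one multiplies $x_i$ by the least monomial in the minors $\sigma_j^-$ making it a global section of a line bundle on $Z^-$. Unwinding this, $\widetilde x_i$ is $x_i$ regarded as a global section of $\Orb\bigl(\sum_{j \in J}(-\val_{E_j}(x_i)) E_j\bigr)$, and this is the smallest line bundle of which $x_i$ is a section. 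This is exactly the $(Y^-, Z^-)$-analogue of Lemma \ref{lem: homogenisation Cox}, which identifies this element with the function $\lif x_i$ of Formula \eqref{formula lifting variables}; and $\widetilde x_j = \sigma_j^- = \lif x_j$ for $j \in J$. Hence $\widetilde x = \lif x$. I expect this to be the main obstacle: one must reconcile carefully the homogenization recipe of \cite{geiss2008partial}, phrased there through the grading and through explicit manipulations of the minors, with the ``clear the poles along the $E_j$ minimally'' description supplied by Corollary \ref{cor:cox, to compare Leclerc} and Lemma \ref{lem: homogenisation Cox}; the rest of the proof is formal.

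For the extended exchange matrix, I would use that $\widetilde t$ is a $\ZZ^J$-graded seed for the grading of $\Cox(Z^-)$, with degree configuration $\widetilde\sigma$ given by $\widetilde\sigma_i = \deg \widetilde x_i = (-\val_{E_j}(x_i))_{j \in J} = \nu_{\bullet i}$ for $i \in I$ (by the previous step and the isomorphism \eqref{eq: X(T) e Pic(X)} for $Z^-$) and $\widetilde\sigma_j = e_j$ for $j \in J$. Reading the graduation condition \eqref{graduation condition} one column at a time, in the basis $(e_j)_{j \in J}$ of $\ZZ^J$, forces for every $k \in I_{uf}$
\[
\sum_{i \in I} b_{ik}\, \nu_{\bullet i} + \sum_{j \in J} \widetilde b_{jk}\, e_j = 0,
\]
whence $\widetilde b_{jk} = -(\nu B)_{jk}$, i.e.\ $\widetilde B_{|J \times I_{uf}} = -\nu B$, which is precisely $\lif B_{|J \times I_{uf}}$ by Formula \eqref{formula lifting B}. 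Combined with $\widetilde B_{|I \times I_{uf}} = B = \lif B_{|I \times I_{uf}}$, and with the matching of the frozen status of the adjoined vertices (the $\sigma_j^-$ are non-invertible in $\Cox(Z^-)$, cutting out the prime divisors $E_j$, hence highly frozen on both sides, in accordance with Definition \ref{def: minimal mon lifting}), this gives $\widetilde t = \lif t^J$. The same argument applies verbatim to Kadhem's construction \cite{kadhem2023cluster}.
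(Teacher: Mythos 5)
Your proposal is correct and follows the same skeleton as the paper's proof: match the vertex sets and the $I\times I_{uf}$ block (definitional on the GLS side), identify the adjoined frozen variables with the minors $\sigma_j^-=\Delta_{\varpi_j,\varpi_j}$ (the paper makes this precise via \cite[Proposition 2.7]{fomin1999double}, since $\sigma_j^-$ is a pullback under the transpose map), and identify the lifted cluster variables by recognizing the Geiss--Leclerc--Schr\"oer homogenization as the minimal homogenization of Corollary \ref{cor:cox, to compare Leclerc} via Lemma \ref{lem: homogenisation Cox} --- exactly the step the paper also treats as the content of ``$\widetilde x_i=\lif x_i$''. Where you genuinely diverge is the last step: the paper pins down $\widetilde B_{|J\times I_{uf}}=-\nu B$ by checking the hypotheses of \cite[Proposition 3.0.9]{francone2023minimal}, using \cite[Theorem 10.2]{geiss2008partial} in the form $\mu_k(\widetilde x)_k=\widetilde{\mu_k(x)_k}$, whereas you rederive the block directly from the graduation condition \eqref{graduation condition} applied column by column; your linear algebra is right, and this makes the argument more self-contained (you are in effect reproving the relevant special case of that proposition). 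One caution: your assertion that ``$\widetilde t$ is a $\ZZ^J$-graded seed with degree configuration $\widetilde\sigma=(\deg\widetilde x_i)$'' is not automatic from the homogeneity of the cluster variables alone --- knowing the degrees of the $\widetilde x_i$ does not yet say that $\widetilde B$ satisfies \eqref{graduation condition}. What makes it a degree configuration is the homogeneity of the exchange relations, equivalently of the adjacent variables $\mu_k(\widetilde x)_k$, and this is precisely the part of \cite[Theorem 10.2]{geiss2008partial} the paper invokes. You do mention that homogenization commutes with mutation, but you use it for the (unnecessary) reduction at the start rather than at this point, where it is the essential input; with that citation moved to justify the gradedness of $\widetilde t$, your argument is complete, and, as you say, it transfers verbatim to Kadhem's setting.
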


\begin{proof}
 Let
  $$
  t= (I_{uf}, I_{sf}, I_{hf}, B,x) \quad \text{and} \quad \widetilde t= (\widetilde I_{uf}, \widetilde I_{sf}, \widetilde I_{hf} , \widetilde B, \widetilde x).
  $$
  By the definition of $\widetilde t$, we have that
  $$
  \widetilde I_{uf}=I_{uf}, \quad \widetilde I_{sf}=I_{sf}= \emptyset, \quad \widetilde I_{hf}=I_{hf} \cup J, \quad \widetilde B_{|I \times I_{uf}}=B.
  $$
In particular, the seeds $\widetilde t$ and $\lif t^J$ have the same sets of vertices.
By the definition of $\widetilde t$, the frozen variable $\widetilde x_j \, (j \in J)$ is the generalized minor $\Delta_{\varpi_j, \varpi_j} \in \Orb_{\lX^-}(\lX^-)$. 
  But, by \cite[Proposition 2.7]{fomin1999double}, we have that 
  $$
  \sigma^-_j= \Delta_{\varpi_j, \varpi_j} \qquad(j \in J).
  $$ 
  Then, let $i \in I$. 
  The cluster variable $\widetilde x_i$ is, by definition, the homogenization of $x_i$ as defined in Corollary \ref{cor:cox, to compare Leclerc}.
  Therefore, we have that 
  $$
  \lif x_i = \widetilde x_i.
  $$
  Hence, the seeds $\widetilde t$ and $\lif t^J$ have the same cluster variables.
  Let $\nu \in \ZZ^{J \times I }$ be the minimal lifting matrix of the seed $t$ with respect to the standard homogeneously suitable for lifting structure of $\Cox(Z^-)$ corresponding to the open subset $Y^-$.
  In order to conclude the proof, it only remains to show that 
  \begin{equation}
      \label{eq: aux 1}
      \widetilde B_{| J \times I_{uf}}= - \nu B.
  \end{equation} 
 By \cite[Theorem 10.2]{geiss2008partial}, we have that for any $k \in I_{uf}$, the formula $\mu_k(\widetilde x)_k= \widetilde{\mu_k(x)_k}$ holds. 
 Therefore, Corollary \ref{cor:cox, to compare Leclerc} and the previous discussion shows that the assumptions of \cite[Proposition 3.0.9]{francone2023minimal} are satisfied. 
 Thus, \cite[Proposition 3.0.9]{francone2023minimal} implies that \eqref{eq: aux 1} holds.
 \end{proof}

Let $t$ be a seed of the cluster algebra $\clu_J$.
Lemma \ref{lem: min mon cox affine} implies the following improvement of Statement (i) of \cite[Theorem 10.2]{geiss2008partial}.

\begin{coro}
    The upper cluster algebra $\uclu(\lif t^J)$ is a $\Pic(Z^-)$-graded subalgebra of $\Cox(Z^-).$
\end{coro}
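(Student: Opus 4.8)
The plan is to obtain the statement as a direct application of Lemma \ref{lem: min mon cox affine} to the pair $(Y^-, Z^-)$. First I would recall what has already been established in this section: the variety $Y^- = U(z)$ is an affine space, so the pair $(Y^-, Z^-)$ satisfies the assumptions of Section \ref{sec: case of Cox}, and by \cite[Theorem 17.4]{geiss2011kac} we have $\Orb_{Y^-}(Y^-) = \uclu_J$; in particular $\Orb_{Y^-}(Y^-) = \uclu(t)$ for the given seed $t$ of $\clu_J$. Moreover, the seed $t$ is of maximal rank: this is part of the Geiss--Leclerc--Schr\"oer construction and is exactly the hypothesis under which the starfish-lemma argument recalled above yields $\clu_J = \uclu_J = \Orb_{Y^-}(Y^-)$; since maximal rank is mutation-invariant, it is enough to know it for the initial seed.

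Granting this, Lemma \ref{lem: min mon cox affine} applies directly with $Y = Y^-$ and $Z = Z^-$: it guarantees that Hypotheses $2$ and $3$ of Theorem \ref{thm: min mon lifting} hold (Hypothesis $1$ being the maximal-rank input just discussed), and that the conclusion of Theorem \ref{thm:min lifting Cox} holds for $\lif t^J$, the minimal monomial lifting of $t$ with respect to the standard homogeneously suitable for lifting structure of $\Cox(Z^-)$ corresponding to $Y^-$. That conclusion is precisely the inclusion $\uclu(\lif t^J) \subseteq \Cox(Z^-)$ of $\Pic(Z^-)$-graded algebras, once $\lif t^J$ is regarded as a $\Pic(Z^-)$-graded seed via its degree configuration $\lif 0$ and the identification $\ZZ^J \simeq \Pic(Z^-)$ of \eqref{eq: X(T) e Pic(X)} (transported to $Z^-$ through the transpose isomorphism), and $\Cox(Z^-) = \Orb_{\lX^-}(\lX^-)$ is equipped with its tautological $\Pic(Z^-)$-grading. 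Invoking Theorem \ref{thm:Cox GP geiss e minimal mon lifting} one may additionally rewrite $\lif t^J = \widetilde t$, so the conclusion reads $\uclu(\widetilde t) \subseteq \Cox(Z^-)$, which is the claimed improvement of statement (i) of \cite[Theorem 10.2]{geiss2008partial}.

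There is no real obstacle in this argument; the only point deserving attention is the bookkeeping of gradings, namely that the $\Pic(Z^-)$-grading on $\uclu(\lif t^J)$ determined by $\lif 0$ matches, under the inclusion, the restriction of the Cox grading. This compatibility is, however, already built into the statements of Theorem \ref{thm:min lifting Cox} and Theorem \ref{thm: min mon lifting}, which are formulated as an equality, respectively an inclusion, of $\Pic(Z)$-graded algebras; so nothing beyond invoking these results is needed.
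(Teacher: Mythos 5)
Your proposal is correct and follows essentially the same route as the paper, which justifies the corollary in one line by applying Lemma \ref{lem: min mon cox affine} to the pair $(Y^-,Z^-)$ (with $Y^-$ an affine space, $\Orb_{Y^-}(Y^-)=\uclu_J$, and the GLS seed of maximal rank) and reading off the graded inclusion from Theorem \ref{thm:min lifting Cox}. Your extra remarks on the grading bookkeeping and on the maximal-rank input, which the paper leaves implicit, are consistent with its treatment and add nothing contradictory.
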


Let $\nu \in \ZZ^{J \times I }$ be the minimal lifting matrix of the seed $t$ with respect to the standard homogeneously suitable for lifting structure of $\Cox(Z^-)$ corresponding to the open subset $Y^-$.
Moreover, let $k$ be a mutable vertex of $t$ and $t':= \mu_k(t)$.
Let $\nu':= \mu_k(\nu) \in \ZZ^{J \times I}$ be the mutation of the lifting matrix $\nu$, as defined in \cite[Definition 3.0.6]{francone2023minimal}.

\begin{coro}
    \label{coro:mutation min lift matrix GP}
  The matrix $\nu'$ is the minimal lifting matrix of the seed $t'$ with respect to the standard homogeneously suitable for lifting structure of $\Cox(Z^-)$ corresponding to the open subset $Y^-$.
\end{coro}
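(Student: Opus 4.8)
The plan is to deduce the statement from the compatibility of the minimal monomial lifting construction with mutation: I would show that the mutated seed $\mu_k(\lif t^J)$ is \emph{itself} the minimal monomial lifting $\lif{t'}^J$ of $t'=\mu_k(t)$, and then read off the equality $\nu'=\mu_k(\nu)=\text{(minimal lifting matrix of }t'\text{)}$ by comparing the intrinsic data of these two seeds. Throughout I use that, by Corollary \ref{coro:standard lifting cox GP}, the relevant homogeneously suitable for lifting scheme is $(\lX^-,\Li^-,\sigma^-)$ and that $\Cox(Z^-)=\Orb_{\lX^-}(\lX^-)$.

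First I would set the stage. Since $Y^-$ is an affine space, Lemma \ref{lem: min mon cox affine} applies to $t$, and to $t'$ as well, because $t$ is of maximal rank (any seed of $\clu_J$ is) and maximal rank is mutation-invariant. Hence Hypotheses 1--3 of Theorem \ref{thm: min mon lifting} hold for both seeds, so $\lif t^J$ and $\lif{t'}^J$ are defined, the bijection $\Delta(t)\to\Delta(\lif t^J)$ of \cite[Corollary 3.0.14]{francone2023minimal} is available, every cluster variable of a seed mutation-equivalent to $\lif t^J$ is a $\Pic(Z^-)$-homogeneous element of $\Cox(Z^-)$, and the restriction formula \eqref{eq: restriction clust variables} is valid. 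Next I would check that $\mu_k(\lif t^J)$ $J$-extends $t'$: mutation at the mutable vertex $k$ leaves all frozen vertices and all frozen cluster variables unchanged, so the $\sigma^-_j$ $(j\in J)$ remain among its highly frozen variables, while its principal-on-$I$ exchange submatrix is $\mu_k(\lif B)_{|I\times I_{uf}}=\mu_k(B)=B'$, the exchange matrix of $t'$. Finally, by \eqref{eq: restriction clust variables} the cluster variables of $\mu_k(\lif t^J)$ indexed by $I$ are homogeneous with $\iota^*(\mu_k(\lif x)_i)=\mu_k(x)_i=x'_i$, and applying \eqref{eq: restriction clust variables} to the length-two mutation sequences $\mu_l\circ\mu_k$ $(l\in I_{uf})$ shows that $\mu_l(\mu_k(\lif x))_l$ is homogeneous with $\iota^*\bigl(\mu_l(\mu_k(\lif x))_l\bigr)=\mu_l(x')_l$.

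These are exactly the hypotheses required by the uniqueness part of the minimal monomial lifting theory, so I would invoke \cite[Proposition 3.0.9]{francone2023minimal} (the same sharpening already used in the proof of Theorem \ref{thm:Cox GP geiss e minimal mon lifting}) to conclude $\mu_k(\lif t^J)=\lif{t'}^J$. I would then compare degree configurations. Being literally the same seed, the two sides have the same cluster variables, hence the same degree configuration for the $\Pic(Z^-)$-grading of $\Cox(Z^-)$. On the one hand this configuration is $\mu_k(\lif 0)$, whose restriction to the vertices indexed by $I$ is, by \eqref{mutation graduation} together with the definition of the lifting-matrix mutation \cite[Definition 3.0.6]{francone2023minimal} (set up precisely to match both \eqref{formula lifting B} and \eqref{mutation graduation}), the matrix $\nu'=\mu_k(\nu)$. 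On the other hand it is the canonical configuration $\lif 0$ of $\lif{t'}^J$, whose restriction to the vertices indexed by $I$ is, by definition, the minimal lifting matrix of $t'$. Comparing, $\nu'$ is the minimal lifting matrix of $t'$. (Equivalently, one compares the lower $J\times I_{uf}$ blocks $-\nu' B'$ and $-\nu'' B'$ of the two extended exchange matrices, where $\nu''$ is the minimal lifting matrix of $t'$; since $\nu'$ and $\nu''$ already agree on the columns indexed by $I\setminus\{k\}$ and the $k$-th row of $B'$ is nonzero — a zero $k$-th row would make $\mu_k(x)_k=(M_k^++M_k^-)/x_k$ a genuinely non-polynomial Laurent expression, as then $M_k^\pm$ are monomials in the highly frozen variables and $x_k$ is an irreducible distinct from them by Theorem \ref{thm: inv upper}, contradicting $\mu_k(x)_k\in\Orb_{Y^-}(Y^-)$ — one gets $\nu'=\nu''$.)

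The only genuine obstacle is the one addressed in the third paragraph: one must not invoke Theorem \ref{thm: uncity min mon lifting} directly, since its first hypothesis $\uclu(\mu_k(\lif t^J))=\uclu(\lif t^J)=\Cox(Z^-)$ is not known for general partial flag varieties; this is exactly why the argument must pass through the sharper statement \cite[Proposition 3.0.9]{francone2023minimal}. Everything else is routine bookkeeping with the already-established properties of the lifting construction.
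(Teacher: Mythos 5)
Your reduction to \cite[Proposition 3.0.9]{francone2023minimal} breaks down at the decisive step. What \eqref{eq: restriction clust variables} gives you about $\mu_k(\lif x)_k$ (and about the twice-mutated variables $\mu_l(\mu_k(\lif x))_l$) is only that they are $\Pic(Z^-)$-homogeneous regular elements restricting to $\mu_k(x)_k$, resp.\ $\mu_l(x')_l$. By Corollary \ref{cor:cox, to compare Leclerc} such an element is determined only up to a monomial in the boundary sections: a priori $\mu_k(\lif x)_k=(\sigma^-)^q\,\widetilde{\mu_k(x)_k}$ for some $q\in\NN^J$, and nothing in your argument shows $q=0$. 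But $q=0$ — minimality of the lifted mutated variable, equivalently the absence of extra vanishing of $M^+_k+M^-_k$ along the divisors $E_j$ — is essentially the content of the corollary itself, not a formal property of the construction. Your verification amounts to Hypotheses 2 and 3 of Theorem \ref{thm: uncity min mon lifting} for the pair $(t',\mu_k(\lif t^J))$; if those alone forced the conclusion, Hypothesis 1 of that theorem would be superfluous and the corollary would hold for every minimal monomial lifting with no input from \cite{geiss2008partial}. That this is not so is visible in how \cite[Proposition 3.0.9]{francone2023minimal} is actually used in the proof of Theorem \ref{thm:Cox GP geiss e minimal mon lifting}: homogeneity and correct restriction of the GLS variables are automatic there, yet the author still needs \cite[Theorem 10.2]{geiss2008partial} (the identity $\mu_k(\widetilde x)_k=\widetilde{\mu_k(x)_k}$) together with Corollary \ref{cor:cox, to compare Leclerc} to meet the proposition's assumptions — i.e.\ the assumptions require the variables and their one-step mutations to be the \emph{minimal} homogeneous lifts, which is exactly what you never establish.

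For comparison, the paper's proof is the combination you try to avoid: Theorem \ref{thm:Cox GP geiss e minimal mon lifting} identifies $\lif t^J$ with the Geiss--Leclerc--Schr\"oer seed $\widetilde t$, \cite[Theorem 10.2]{geiss2008partial} says the GLS homogenization commutes with mutation, and \cite[Lemma 3.0.8]{francone2023minimal} translates this into the statement about $\mu_k(\nu)$. Your closing parenthetical (comparing the blocks $-\nu'B'$ and $-\nu''B'$ and using that the $k$-th row of $B'$ is nonzero) does not rescue the argument, since it presupposes that the lower block of $\mu_k(\lif t^J)$ equals $-\nu''B'$, i.e.\ the identification $\mu_k(\lif t^J)=\lif{t'}^{J}$ that is the missing point. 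To repair the proof you must import the GLS minimality statement (or a substitute such as $\uclu(\lif t^J)=\Cox(Z^-)$, which, as you yourself note, is not known for general partial flag varieties).
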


\begin{proof}
   This is an immediate consequence of \cite[Lemma 3.0.8]{francone2023minimal}, \cite[Theorem 10.2]{geiss2008partial}  and Theorem \ref{thm:Cox GP geiss e minimal mon lifting}. 
\end{proof}

Corollary \ref{coro:mutation min lift matrix GP} means that the order of pole of the non-initial cluster variables of the algebra $\uclu_J$, along the boundary divisors of $Y^-$ in $Z^-$, can be computed by iteratively applying the formulas of \cite[Definition 3.0.6]{francone2023minimal} to the matrix $\nu$.
Recall that the matrix $\nu$ encodes the order of pole of the initial cluster variables of the algebra $\uclu_J$ along the boundary divisors of $Y^-$ in $Z^-$.

\begin{remark}
\label{rem: cox GP valuations preproj alg}
In the notation of Theorem \ref{thm:Cox GP geiss e minimal mon lifting}, it is interesting to compare the definition of the seed $\lif t^J$ to Geiss-Leclerc-Schr\"oer's definition of the seed $\widetilde t$.
One intriguing observation that follows from Theorem \ref{thm:Cox GP geiss e minimal mon lifting}, is that the order of pole of the cluster variables of the seed $t$, along the irreducible components of $Z^- \setminus Y^-$, can be expressed in terms of the dimension of the spaces of morphisms between certain modules over the preprojective algebra of the Dynkin diagram of $G$.
It would be interesting to further investigate this relation. 
\end{remark}

\section{The diagonal partial compactification of the finite cluster variety}
\label{sec: diag comp cluster manifold}

\textbf{Assumptions.} In this section, $t$ is a seed of maximal rank of an ambient field $\KK$. 
We assume that $t$ has no semi-frozen vertex, and that the upper cluster algebra $\uclu(t)$ is factorial.

\begin{remark}
    The factoriality of an upper cluster algebra  can be characterized, locally, thanks to \cite[Theorem 4.9]{cao2022valuation}.
    Examples of upper cluster algebras satisfying the previous assumptions are common in the literature, and can be found for example in \cite{goodearl2021integral}, \cite{geiss2011kac}, \cite{geiss2013factorial}, \cite{francone2023minimal}, \cite{galashin2023braid}.
\end{remark} 

As usual, we use the notation $t=(I_{uf}, I_{sf}= \emptyset, I_{hf}, B, x)$. 
For $i \in I_{uf} \cup \{ 0 \}$, we introduce the auxiliary notation
$$
t(i):= \begin{cases}
    t & \text{if} \, \, i=0\\
    \mu_i(t) & \text{otherwise}.
\end{cases} \quad \text{and} \quad  t(i)= \bigl(I_{uf}, I_{hf}, B(i), x(i) \bigr).
$$

Let $\Delta$ be the set of seeds mutation-equivalent to $t$. 
Since every frozen vertex of $t$ is highly frozen, the coeffincient ring $\coef[\Delta]$ of the cluster algebra of the seed $t$ is a polynomial ring. Namely, we have that: 
$$
\coef[\Delta]= \CC\bigl[x(0)_j\bigr]_{j \in I_{hf}}.
$$
Then, for $i \in I_{uf} \cup \{0\}$, we set 
$$
T(i):= \Spec\biggl(\Li(t(i))\biggr), \qquad  \overline{T(i)}:= \begin{cases}
T(i) & \text{if} \quad i=0\\
\Spec \biggl( \coef[\Delta][x(i)_i][x(i)^{\pm 1}_j]_{j \in I_{uf} \setminus \{i \}} \biggr) & \text{otherwise}
, \end{cases}
$$
where the notation $\Li\bigl(t(i) \bigr)$ is defined in Section \ref{sec: cluster upper cluster}.
Note that we have a canonical inclusion morphism $T(i) \longto \overline{T(i)}$. This map is an isomorphism when $i=0$, and for $i \in I_{uf}$, it identifies $T(i)$ with the principal open subset of $\overline{T(i)}$ defined by the non-vanishing locus of the function $x(i)_i$.

\bigskip

These two families of schemes can be glued along the maps defined by the mutation process. 
The details of the gleuing procedure are explained below.

\bigskip

For $i \in I_{uf}$, we denote  by
$$
M^+(i):= x(0)^{B(0)_{\bullet i}^+}= x(i)^{B(i)_{\bullet i}^-} \quad \quad M^-(i):= x(0)^{B(0)_{\bullet i}^-}= x(i)^{B(i)_{\bullet i}^+} 
$$
the two monomials in the exchange relation determined by the vertex $i$ of the seed $t(0)$.
Moreover, let $T(0)_{M^+(i)+M^-(i)}$ (resp. $T(i)_{M^+(i)+M^-(i)}$) be the open subset of the variety $T(0)$ (resp. $T(i)$) where the function $M^+(i)+M^-(i)$ does not vanish.

For $i,j \in I_{uf}\cup \{0\}$ with $i \neq j$, we define an open subset $U_{ij}$ (resp. $U_{ji}$)  of $T(j)$ (resp. $T(i)$) and an isomorphism $\mu_{ij} : U_{ij} \longto U_{ji} $ as follows.
\begin{enumerate}
    \item If $i \in I_{uf}$, then
    $$
    U_{i0}:= T(0)_{M^+(i)+M^-(i)}, \qquad U_{0i}:= T(i)_{M^+(i)+M^-(i)},
     $$
     and $\mu_{i0}$ is the isomorphism  whose pullback of regular functions is defined by the formula
\begin{equation}
    \label{eq:glueing clust variety}
\mu_{i0}^*\bigl(x(i)_j\bigr):= \begin{cases}
    x(0)_j & \text{if} \quad j \neq i\\[0.6em]
   \frac{M^+(i) + M^-(i)}{x(0)_i} & \text{otherwise}.
\end{cases}
\end{equation}
Moreover, we set $\mu_{0i}:= \mu_{i0}^{-1}$.
\item If $i, j \in I_{uf}$  and $i \neq j$, then
$$
U_{ji}:=\mu_{i0} \bigl( U_{i0} \cap U_{j0} \bigr), \quad \text{and} \quad \mu_{ji}:=  \mu_{j0} \circ \mu_{0i}.
$$
\end{enumerate}


One can easily verify that the set of functions $\mu_{ij} \, (i,j \in I_{uf} \cup \{0\}, \, i \neq j)$ is a well defined glueing data.

   \begin{definition}
       Let $Y$ (resp. $Z$) be the scheme obtained by glueing the $T(i)$ (resp. $\overline{T(i)}$ ) along the open subsets $U_{ji}$, via the maps $\mu_{ji}$.
       We refer to $Y$ as the \textit{finite cluster variety} of the seed $t$, and  to $Z$ as its \textit{diagonal partial compactification}.
   \end{definition}
   
Operationally, the scheme $Y$ (resp. $Z$)  is covered by open subsets $Y(i)$ (resp.  $Z(i)$), indexed by $i$ in $I_{uf} \cup \{0\} $.
Furthermore, for any index $i$, there are canonical isomorphims $\iota_i : T(i) \longto Y(i)$ (resp. $\overline{\iota_i}: \overline{T(i)} \longto Z(i)$) such that, for any $i,j \in I_{uf} \cup \{0\}$ with $i \neq j$, the following holds:
$$
\begin{array}{lcl}
   \iota_i(U_{ji})= Y(i) \cap Y(j)   &  &  \overline{\iota}_i(U_{ji})= Z(i) \cap Z(j). 
\end{array}
$$
Moreover, we have equalities of birational maps 
$$
\begin{array}{lcl}
   \mu_{ij}= \iota_i^{-1} \circ \iota_j, & &   \mu_{ij}= \overline{\iota}_i^{-1} \circ \overline{\iota}_j.
\end{array}
$$
Note that both $Y$ and $Z$ are smooth integral schemes of finite type. 
The natural maps $T(i) \longto  \overline{ T(i)}$ glue to a global open embedding $Y \longto Z$.
From now on, we  identify $Y$ with an open subset of $Z$ trough this map and write $Y \subseteq Z$.
Note that, for any $i \in I_{uf} \cup \{0\}$, we have that $Z(i) \cap Y= Y(i)$. 

\begin{example}
\label{ex: diago comp 1, construction}
 Let $t$ be the seed which is graphically represented by the following quiver 

\[\begin{tikzcd}
	 {\bigcirc 1}  & {\blacksquare 2}
	\arrow[from=1-1, to=1-2]
\end{tikzcd}\]
and whose cluster is $x=(x_1,x_2)$. 

We have that $T(0)= \bigl( \AAA^1 \setminus \{0\} \bigr) \times \AAA^1$, with coordinates $x_1=x(0)_1$ and $x_2=x(0)_2$. Similarly,  $T(1)= \bigl( \AAA^1 \setminus \{0\} \bigr) \times \AAA^1$, with coordinates $x(1)_1$ and $ x(1)_2.$
Moreover, we have that 
$$
U_{10}=  \bigl( \AAA^1 \setminus \{0\} \bigr) \times \bigl( \AAA^1 \setminus \{-1\}) \subseteq T(0) \qquad U_{01}= \bigl( \AAA^1 \setminus \{0\} \bigr) \times \bigl( \AAA^1 \setminus \{-1\}) \subseteq T(1),
$$
and the glueing map $\mu_{10}$ sends a point $(x_1,x_2) $ to $\bigl(\frac{1+x_2}{x_1}, x_2\bigr).$
The morphisms 
$$
\begin{array}{rcl c rcl}
   T(0)  & \longto &  \AAA^2 & &  T(1)  & \longto &  \AAA^2\\[0.7em]
    (x_1, \, x_2) & \longmapsto & \bigl(x_1, \,  \frac{1+x_2}{x_1} \bigr)
    & & 
     (x(1)_1, \, x(1)_2) & \longmapsto & \bigl( \frac{1+x(1)_2}{x(1)_1}, \,  x(1)_1 \bigr)
\end{array}
$$
glue to an open embedding $Y \longto \AAA^2$ whose image is the complement of the origin.

Moreover, we have that $\overline{T(0)}=T(0)$ and $\overline{T(1)}=\AAA^1 \times \AAA^1$. 
Note that the complement of $Y$ in $Z$ is an irreducible closed subspace $E_{1'}$ of $Z$ that, in the chart given by $\overline{T(1)}$, corresponds to the vanishing locus of the function $x(1)_1.$

Observe that, while the finite cluster variety $Y$ is separated,  its diagonal partial compactification $Z$ is not, at least in this example. 
Indeed, recall the natural charts $\overline{\iota}_i : \overline{T(i)} \longto Z \, (1 \leq i \leq 2)$, and consider the morphisms
$$
\begin{array}{rcl c rcl}
    f_0 : \AAA^1 & \longto & \overline{T(0)} & & 
    f_1 : \AAA^1 & \longto & \overline{T(1)} \\
     t & \longmapsto & (1, t) & & t  & \longmapsto & (1+t,t). 
\end{array}
$$
The maps $\overline{\iota}_0 \circ f_0$ and $\overline{\iota}_1 \circ f_1$ agree on the open subset $\AAA^1 \setminus \{-1\}$ of $\AAA^1$, but they are different. 
Indeed, the point $\overline{\iota}_0 \circ f_0 (-1)$ belongs to $Y$, while  $\overline{\iota}_1 \circ f_1 (-1)$ belongs to $E_{1'}$.
It follows that $Z$ is not separated. 
\end{example}

We go back to the general situation.

\begin{lemma}
    \label{lem: fun Y upper}
    For any $i \in I_{uf} \cup \{0\}$, the map  $\iota_i^*$ identifies the ring $\Orb_Y(Y)$ with the upper cluster algebra $\uclu(t)$.
\end{lemma}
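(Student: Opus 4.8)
The plan is to identify the ring of global regular functions on $Y$ by using the open cover $\{Y(i)\}_{i \in I_{uf} \cup \{0\}}$ and the sheaf axiom, translating everything through the charts $\iota_i : T(i) \xrightarrow{\sim} Y(i)$. Under these isomorphisms, a global function on $Y$ becomes a compatible family of functions $f_i \in \Orb_{T(i)}(T(i)) = \Li(t(i))$, one for each $i$, that agree on the overlaps $Y(i) \cap Y(j)$. Since $Y$ is covered by the charts $Y(0)$ and $Y(j)$ for $j \in I_{uf}$ (that is, $i = 0$ and all single mutations of $t$), the compatibility condition between $f_0$ and each $f_j$, transported via $\mu_{j0}$, says precisely that $f_0$ and $f_j$ define the same rational function. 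Hence $\Orb_Y(Y)$ is identified with
$$
\bigcap_{i \in I_{uf} \cup \{0\}} \Li(t(i)) = \bigcap_{i \in I_{uf} \cup \{0\}} \Li(t(i)),
$$
computed inside the common fraction field $\CC(t)$, which is exactly the upper bound $\upp(t)$ of Definition \ref{upper bound}.

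First I would make precise that the birational maps $\mu_{j0}$, whose pullback is given by \eqref{eq:glueing clust variety}, are exactly the birational identifications underlying mutation: the formula $\mu_{i0}^*(x(i)_j) = x(0)_j$ for $j \neq i$ and $\mu_{i0}^*(x(i)_i) = (M^+(i)+M^-(i))/x(0)_i$ matches the exchange relation \eqref{exchange relation}, so all the charts $T(i)$ are glued along the \emph{same} embeddings into $\CC(t)$ that are used to define $\Li(t(i)) \subseteq \CC(t)$. Consequently the glued scheme $Y$ has a canonical dominant map to $\Spec$ of its function field, all charts $Y(i)$ restrict to open subsets of a fixed birational model, and $\iota_i^*$ sends a global function $f$ on $Y$ to its restriction in $\Li(t(i))$, with the restrictions all equal as elements of $\CC(t)$. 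This gives the inclusion $\iota_0^*\bigl(\Orb_Y(Y)\bigr) \subseteq \upp(t)$. Conversely, any element of $\upp(t)$ lies in every $\Li(t(i))$, hence defines a regular function on each $Y(i)$, and these manifestly agree on overlaps (they are literally the same rational function), so they glue to a global function on $Y$; this gives the reverse inclusion.

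Finally I would invoke Theorem \ref{upper cluster equals upper bound}: since $t$ is of maximal rank by the standing assumption of the section, $\uclu(t) = \uclu(\Delta) = \upp(t)$. Combining this with the previous paragraph yields $\iota_0^*\bigl(\Orb_Y(Y)\bigr) = \uclu(t)$. For a general index $i \in I_{uf} \cup \{0\}$ the same argument applies verbatim with $t(i)$ playing the role of the initial seed — the seed $t(i)$ is mutation-equivalent to $t$, hence also of maximal rank, and $\upp(t(i)) = \uclu(\Delta) = \upp(t)$ — so $\iota_i^*$ likewise identifies $\Orb_Y(Y)$ with $\uclu(t)$. (One should note that injectivity of $\iota_i^*$ is automatic since $Y$ is integral and $Y(i)$ is a nonempty open subset, so a global function vanishing on $Y(i)$ vanishes identically.)

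The main obstacle I anticipate is purely bookkeeping rather than conceptual: one must carefully check that the glueing data $\{\mu_{ij}\}$ is compatible with the \emph{literal} birational identifications of the $\CC(t(i))$ with $\CC(\Delta)$ used in Section \ref{sec: cluster upper cluster} to define $\Li(t(i))$ and $\upp(t)$ — in particular that the cocycle condition on the $\mu_{ij}$ (asserted without proof just before the definition of $Y$ and $Z$) is exactly the statement that these birational identifications are mutually consistent. Once that is unwound, the identification $\Orb_Y(Y) = \bigcap_i \Li(t(i)) = \upp(t)$ is immediate from the sheaf condition, and the rest is Theorem \ref{upper cluster equals upper bound}.
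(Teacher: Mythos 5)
Your proof is correct and follows essentially the same route as the paper: the paper's own (very terse) argument is precisely that the glueing data identifies $\Orb_Y(Y)$ with $\bigcap_{i \in I_{uf}\cup\{0\}}\Li(t(i)) = \upp(t)$, and then Theorem \ref{upper cluster equals upper bound} applies because $t$ has maximal rank. Your write-up just spells out the sheaf-axiom and chart-identification details that the paper leaves implicit.
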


\begin{proof}
    Since $t$ is of maximal rank, this is an immediate consequence of Theorem \ref{upper cluster equals upper bound} and the definition of the glueing maps used to construct $Y$.
\end{proof}

The following  lemma is not used in the rest of the paper, but it is conceptually important to relate the previous construction to the more common notion of cluster varieties defined by \cite{fock2009cluster} and developed by
  \cite{gross2013birational}. 
  See for instance Remark \ref{rem: cluster var GHK}.

  \begin{lemma}
  \label{lem:to compare GHK}
      Let $i,j \in I_{uf} \cup \{0\}$ satisfying $i \neq j$.
      The largest open subset of $T(j)$ on which the birational map $\mu_{ij}: T(j) \dashrightarrow T(i)$ is both defined and  an isomorphism with its image is $ U_{ij}.$ 
  \end{lemma}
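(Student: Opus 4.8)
The plan is to identify, for every pair of distinct indices $i,j\in I_{uf}\cup\{0\}$, the \emph{domain of definition} of the birational map $\mu_{ij}$ — the largest open $V\subseteq T(j)$ to which $\mu_{ij}$ extends as a morphism $V\longto T(i)$ — and to prove that it equals $U_{ij}$. This suffices: the $\mu_{ij}$ form a glueing datum, so $\mu_{ij}$ restricts to an isomorphism $U_{ij}\longto U_{ji}$, in particular an isomorphism onto its image on $U_{ij}$; and any open on which $\mu_{ij}$ is both defined and an isomorphism onto its image is contained in the domain of definition, hence in $U_{ij}$ — so $U_{ij}$ is the largest open with the asserted property. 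To compute the domain of definition I will use that $T(k)=\Spec\bigl(\Li(t(k))\bigr)$ with $\Li(t(k))=\coef[\Delta][x(k)_l^{\pm1}]_{l\in I_{uf}}$, and that $\coef[\Delta]$ is a polynomial ring because $t$ has no semi-frozen vertex. Hence, for each $k$, every $x(k)_l$ with $l\in I_{uf}$ is a unit of $\Li(t(k))$, while the elements of $\coef[\Delta]$ and the $x(k)_l$ with $l\in I_{hf}$ are everywhere regular on $T(k)$; and since $T(i)$ is affine with $\Li(t(i))$ generated over $\CC$ by the finite set $\{x(i)_l: l\in I\}\cup\{x(i)_l^{-1}: l\in I_{uf}\}$, the domain of definition of a birational map $f:T(j)\dashrightarrow T(i)$ is the intersection, over these generators $g$, of the largest open of $T(j)$ on which $f^*(g)$ is regular. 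The remaining work is to pull these generators back through the explicit formula \eqref{eq:glueing clust variety} for the glueing, using the exchange relation \eqref{exchange relation}.

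I would treat first the pairs with $0\in\{i,j\}$, say $j=0$ and $i\in I_{uf}$. By \eqref{eq:glueing clust variety}, $\mu_{i0}^*$ fixes $\coef[\Delta]$, sends $x(i)_l\mapsto x(0)_l$ for $l\in I_{uf}\setminus\{i\}$, and sends $x(i)_i\mapsto\bigl(M^+(i)+M^-(i)\bigr)/x(0)_i$; together with $x(i)_l^{-1}\mapsto x(0)_l^{-1}$ for $l\in I_{uf}\setminus\{i\}$, all of these images are regular on all of $T(0)$. The only remaining generator is $x(i)_i^{-1}$, whose image $x(0)_i/\bigl(M^+(i)+M^-(i)\bigr)$ is, since $x(0)_i$ is a unit, regular precisely on the non-vanishing locus of $M^+(i)+M^-(i)$ — which is $U_{i0}$ by definition. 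So the domain of definition of $\mu_{i0}$ is $U_{i0}$. The symmetric computation for $\mu_{0i}=\mu_{i0}^{-1}$, where now $x(i)_i$ is a unit of $\Li(t(i))$ and the exchange relation yields $\mu_{0i}^*(x(0)_i)=\bigl(M^+(i)+M^-(i)\bigr)/x(i)_i$, identifies the domain of definition of $\mu_{0i}$ with $U_{0i}$. This handles all pairs with $0\in\{i,j\}$.

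For $i,j\in I_{uf}$ with $i\ne j$, recall (relabelling indices in the definition of the glueing maps) that $\mu_{ij}=\mu_{i0}\circ\mu_{0j}$ and $U_{ij}=\mu_{j0}\bigl(U_{i0}\cap U_{j0}\bigr)$. Since $j\ne i$, \eqref{eq:glueing clust variety} and the exchange relation at the vertex $j$ of $t(0)$ give $\mu_{ij}^*(x(i)_j)=\mu_{0j}^*(x(0)_j)=\bigl(M^+(j)+M^-(j)\bigr)/x(j)_j$, where $M^+(j)+M^-(j)$ is a sum of two monomials in the cluster $x(j)$, hence regular on $T(j)$. Consequently $\mu_{ij}^*(x(i)_j^{-1})=x(j)_j/\bigl(M^+(j)+M^-(j)\bigr)$, and because $x(j)_j$ is a unit of $\Li(t(j))$ no pole of this function can be cancelled, so its regular locus is exactly $U_{0j}$; therefore the domain of definition of $\mu_{ij}$ is contained in $U_{0j}$. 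On $U_{0j}$, however, $\mu_{0j}$ is already an isomorphism onto $U_{j0}\subseteq T(0)$, so for $p\in U_{0j}$ the composite $\mu_{ij}=\mu_{i0}\circ\mu_{0j}$ is a morphism near $p$ if and only if $\mu_{i0}$ is a morphism near $\mu_{0j}(p)\in U_{j0}$, which by the previous paragraph happens exactly when $\mu_{0j}(p)\in U_{i0}\cap U_{j0}$, i.e. when $p\in\mu_{0j}^{-1}(U_{i0}\cap U_{j0})=\mu_{j0}(U_{i0}\cap U_{j0})=U_{ij}$. Hence the domain of definition of $\mu_{ij}$ is exactly $U_{ij}$, and the proof is complete. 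I expect the only genuinely delicate point to be this last case: one must rule out that composing $\mu_{i0}$ with $\mu_{0j}$ accidentally resolves the indeterminacy of a factor, and this is precisely what the observation that $x(j)_j$ is a unit secures — it pins the domain of $\mu_{ij}$ inside $U_{0j}$ with no computation, after which the composition bookkeeping via the isomorphism $\mu_{0j}:U_{0j}\longto U_{j0}$ is routine.
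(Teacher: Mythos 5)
Your proof is correct, and it follows the same overall route as the paper: factor $\mu_{ij}=\mu_{i0}\circ\mu_{0j}$ through $T(0)$, compute the base cases explicitly from the glueing formula \eqref{eq:glueing clust variety}, and use invertibility of the unfrozen variables of the target chart (in particular of $x(i)_j$ on $T(i)$) to force non-vanishing of $M^+(j)+M^-(j)$ at any point of definition. The one genuine difference is how the ``isomorphism with its image'' clause is handled. The paper first invokes the fact from EGA that the largest such open coincides with the locus where the map is defined and a local isomorphism, and in the case $i,j\in I_{uf}$ it uses the local-isomorphism hypothesis to transfer the problem to $\mu_{i0}$. You instead compute the full domain of definition of $\mu_{ij}$ directly from the generators $x(i)_l^{\pm1}$ of $\Li\bigl(t(i)\bigr)$ (together with the coefficient ring and the highly frozen variables) and show that it already equals $U_{ij}$; maximality is then immediate, since $\mu_{ij}|_{U_{ij}}$ is an isomorphism onto $U_{ji}$ by construction and any open with the required property sits inside the domain of definition. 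This yields a slightly stronger intermediate statement (the indeterminacy locus of $\mu_{ij}$ is exactly the complement of $U_{ij}$) and makes the EGA reduction unnecessary, at the cost of spelling out the elementary facts that a rational map to an affine target is defined exactly where the pullbacks of algebra generators are regular, and that a function of the form $u/b$ with $u$ a global unit is regular precisely off the zero locus of $b$ --- both of which you do correctly. Your transfer step on $U_{0j}$ (definedness of $\mu_{ij}$ at $p$ is equivalent to definedness of $\mu_{i0}$ at $\mu_{0j}(p)$) is justified properly by composing with the isomorphism $\mu_{0j}$ and its inverse, so there is no gap.
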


  \begin{proof}
Recall that, by \cite[Chapitre premier, Sections 6.5, 8.2]{GrothendieckEGA1} if $f:X \dashrightarrow X'$ is a birational map between complex irreducible algebraic varieties, then the largest open subset on which the map $f$ is both defined and an isomorphism with its image coincides with the set of points $x$, of $X$, where $f$ is defined and  is a local isomorphism.
In other words: the set of points $x \in X$ for which the map $f^*:\CC(X')\longto \CC(X)$ restricts to a well defined isomorphism $f_x^*: \Orb_{X',f(x)} \longto \Orb_{X,x}$  for some $f(x) \in X$. Note that in that case, the point $f(x)$ is unique.

 Let $k \in I_{uf}$. It is obvious that the set of points of $T(0)$ (resp. $T(k)$) on which the birational map $\mu_{k0}$ (resp.  $\mu_{0k}$) is defined is $U_{k0}$ (resp. $U_{0k}$).
Since $\mu_{k0}: U_{k0} \longto U_{0k} $ is an isomorphism, the statement follows if one between $i$ and $j$ is zero.

Assume now that $i,j \in I_{uf}$. 
Then, let $x$ be a point of $T(j)$ on which $\mu_{ij}$ is defined. Note that 
$$
\displaystyle \mu_{ij}^*\big(x(i)_j \big)=
\frac{M^+(j)+M^-(j)}{x(j)_j}.
$$
  Therefore, the function $M^+(j)+M^-(j)$ does not vanish on $x$. 
  In other words: we have that 
  $$
  x \in T(j)_{M^+(j)+M^-(j)}=U_{0j}.
  $$
  In particular, the map $\mu_{0j}$ is defined and is a local isomorphism at the point $x$. Moreover, we have that 
  \begin{equation}
  \label{eq:clust var 1}
      \mu_{0j}(x) \in T(0)_{M^+(j)+M^-(j)}=U_{j0}.
  \end{equation}
  Assume furthermore that $\mu_{ij}$ is a local isomorphism at $x$.
  Recalling that $\mu_{ij}=\mu_{i0}\circ \mu_{0j}$, we deduce from the previous discussion that $\mu_{i0}$ is defined and is a local isomorphism at $\mu_{0j}(x)$. 
  Using the part of the statement have proved so far, we deduce that 
  \begin{equation}
      \label{eq:clust var 2}
      \mu_{0j}(x) \in U_{i0}.
  \end{equation}
 Equations \eqref{eq:clust var 1} and \eqref{eq:clust var 2} imply that 
 $$
 x \in \mu_{j0} ( U_{j0} \cap U_{i0})= U_{ij}.
 $$
 Since its clear that $\mu_{ij}$ is an isomorphism on $U_{ij}$, the statement follows.
\end{proof}

\begin{remark}
    \label{rem: cluster var GHK}
     Let $\mathcal{Y}$ be the $\clu$-type cluster variety obtained by applying \cite[Construction 2.10]{gross2013birational} (see also \cite[Proposition 2.4]{gross2013birational}) to the seed $t$. 
     Note that frozen variables are allowed to vanish on $\mathcal{Y}$. 
      Lemma \ref{lem:to compare GHK} directly implies that the scheme $Y$ can be identified with the open subset of $\mathcal{Y}$ obtained by glueing the schemes $T(i)$ for $i \in I_{uf} \cup \{0\}.$
\end{remark}

\subsection{A cluster structure on the Cox ring of Z}

From now on, for any index $i \in I_{uf} \cup \{0\}$, we identify  the spaces $T(i)$ and $Y(i)$ using the map $\iota_i$.
According to this convention, we identify $\Li \bigl( t(i) \bigr)$ with the ring of regular functions on $Y(i)$. 
Moreover, by Lemma \ref{lem: fun Y upper},  we think about the cluster $x(i) $ as a collection of regular functions on $Y$.

\begin{lemma}
    \label{lem:Pic cluster var}
    We have that $\Pic(Y)= \{0\}$, the ring $\Orb_Y(Y)$ is factorial, and $\Orb_Y(Y)^\times= \CC^\times$. Moreover, the following hold.

    \begin{enumerate}
        \item For any $i, j \in I_{uf}$ with $i \neq j$,  the functions $x(0)_i$ and $x(0)_j$ are coprime on $Y.$
        \item  For any $i \in I_{uf}$,  the functions $x(0)_i$ and $x(i)_i$ are coprime on $Y$.
    \end{enumerate}
\end{lemma}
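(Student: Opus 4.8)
The plan is to deduce everything from the identification $\Orb_Y(Y) = \uclu(t)$ of Lemma \ref{lem: fun Y upper} together with the chart description of $Y$. The three ring-theoretic assertions come essentially for free: $\uclu(t)$ is factorial by the standing hypothesis, and since $t$ has no semi-frozen vertex, Theorem \ref{thm: inv upper} gives $\Orb_Y(Y)^\times = \uclu(t)^\times = \CC^\times$. I would then set $\overline Y := \Spec(\uclu(t))$, a factorial (hence normal) affine variety with $\Cl(\overline Y) = \Pic(\overline Y) = 0$. The key preliminary observation I would record is that each chart $Y(i) \cong \Spec(\Li(t(i)))$ ($i \in I_{uf} \cup \{0\}$) is a principal open of $\overline Y$: every cluster variable $x(i)_j$ lies in $\uclu(t)$ by the Laurent phenomenon (Theorem \ref{laurent pheno}) while $\uclu(t) \subseteq \Li(t(i))$ by definition of the upper cluster algebra, whence $\Li(t(i)) = \uclu(t)[\,g_i^{-1}\,]$ with $g_i := \prod_{j \in I_{uf}} x(i)_j$, and so $\Orb_Y(Y(i)) = \uclu(t)[g_i^{-1}]$.

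Next I would handle the two coprimality claims, which also feed into the Picard computation. The functions involved, $x(0)_i$ for $i \in I_{uf}$ and $x(i)_i = \mu_i(x)_i$, are cluster variables attached to mutable vertices (of $t$, resp.\ of the maximal-rank seed $t(i)$), hence irreducible in the UFD $\uclu(t)$ by Theorem \ref{thm: inv upper}. For claim (1): distinct members of a cluster are algebraically, a fortiori $\CC$-linearly, independent, so $x(0)_i$ and $x(0)_j$ are non-associate for $i \ne j$. For claim (2): if $x(i)_i = c\, x(0)_i$ with $c \in \CC^\times$, the exchange relation at $i$ would force $c\, x(0)_i^2 = M^+(i) + M^-(i)$; but $b_{ii} = 0$ and $B(0)_{\bullet i} \ne 0$ (maximal rank), so the right-hand side is a sum of two distinct Laurent monomials in the cluster $x(0)$ neither involving $x(0)_i$, contradicting the $\CC$-linear independence of distinct monomials in a transcendence basis. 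Thus in each case we have two non-associate irreducibles of $\uclu(t)$, hence a coprime pair; their common zero locus has codimension $\ge 2$ in $\overline Y$, and this survives passage to each localization $\uclu(t)[g_i^{-1}] = \Orb_Y(Y(i))$ (the pair stays coprime, or a member becomes a unit), so the two functions do not vanish simultaneously in codimension one on any $Y(i)$, hence not on $Y$.

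Finally I would compute $\Pic(Y)$. As $Y$ is smooth, $\Pic(Y) = \Cl(Y)$, and by excision relative to the open chart $Y(0)$ --- whose class group vanishes since $\Li(t)$ is a localization of a polynomial ring, hence a UFD --- $\Cl(Y)$ is generated by the classes of the prime divisors of $Y$ lying in $Y \setminus Y(0)$. I would then show $Y \setminus Y(0) = \bigcup_{i \in I_{uf}} V_Y(x(0)_i)$, with each $V_Y(x(0)_i)$ an irreducible hypersurface contained in the chart $Y(i)$ and equal to $\divv_Y(x(0)_i)$: the inclusion in $Y(i)$ holds because on $Y(k)$ with $k \ne i$ the function $x(0)_i$ restricts to the unit $x(k)_i$; inside $Y(i) = \Spec(\uclu(t)[g_i^{-1}])$ the element $x(0)_i$ stays prime, being prime in $\uclu(t)$ and coprime to each factor $x(i)_j$ of $g_i$ by the previous paragraph; and the complementary inclusion comes from the exchange relation $x(0)_k = (M^+(k)+M^-(k))/x(k)_k$ on $Y(k)$, which vanishes exactly off $Y(0)\cap Y(k)$. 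Since each prime divisor $V_Y(x(0)_i) = \divv_Y(x(0)_i)$ is principal, its class is zero, so $\Cl(Y) = \Pic(Y) = 0$. I expect the main obstacle to be precisely this last step --- tracking the gluing charts well enough to pin down $Y \setminus Y(0)$ and to see that $V_Y(x(0)_i)$ is an irreducible divisor supported in a single chart --- which is where the maximal-rank and factoriality hypotheses, through the identification $\Li(t(i)) = \uclu(t)[g_i^{-1}]$, genuinely enter.
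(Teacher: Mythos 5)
Your proposal is correct, and its geometric skeleton is the same as the paper's: identify $\Orb_Y(Y)^\times$ and factoriality from Lemma \ref{lem: fun Y upper} and Theorem \ref{thm: inv upper}, compute $\Pic(Y)$ by excision from the chart $Y(0)$ (whose class group vanishes), show that $Y\setminus Y(0)$ is the union of the irreducible codimension-one loci cut out by the $x(0)_i$, and conclude principality. Where you genuinely diverge is in how irreducibility and coprimality are obtained, and in which order. The paper proves first that each $S_j:=Y(j)\setminus Y(0)$ is irreducible by showing that the exchange binomial $M^+(j)+M^-(j)$ is irreducible in $\Li(t(j))$, for which it cites \cite[Theorem 4.9]{cao2022valuation}; the two coprimality statements are then read off at the end from the divisor identities $\divv(x(0)_i)=S_i$ and $\divv(x(i)_i)_{|Y(i)}=0$. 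You instead prove coprimality first, purely algebraically: $x(0)_i$ and $x(i)_i$ are irreducible in the factorial ring $\uclu(t)$ by Theorem \ref{thm: inv upper} (applied to the maximal-rank seeds $t$ and $\mu_i(t)$), and non-associateness follows from $\uclu(t)^\times=\CC^\times$ together with algebraic independence of a cluster, resp.\ the exchange relation and linear independence of distinct Laurent monomials. You then feed this into the localization identity $\Li(t(i))=\uclu(t)[g_i^{-1}]$, $g_i=\prod_{j\in I_{uf}}x(i)_j$ (valid by Theorems \ref{laurent pheno} and \ref{upper cluster equals upper bound}), to see that $x(0)_i$ stays prime in the chart ring, which yields irreducibility of the boundary component and the divisor identity without invoking the external result on irreducibility of $M^+(i)+M^-(i)$ — in effect you re-derive that fact from the paper's own quoted theorems. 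The trade-off: your route is more self-contained and makes the role of the maximal-rank and factoriality hypotheses explicit, while the paper's is shorter given the cited result and produces the identification $S_j=\divv\bigl(M^+(j)+M^-(j)\bigr)_{|Y(j)}$ directly. The only points to tighten are cosmetic: phrase the "codimension $\ge 2$ in $\overline Y$" step in terms of height-one primes (since $\uclu(t)$ need not be Noetherian, though the chart-by-chart check you actually perform is on finite-type rings and is fine), and note that $V_Y(x(0)_i)\neq\emptyset$ because $x(0)_i$ is not a unit — neither affects the validity of the argument.
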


\begin{proof}
    The ring $\Orb_Y(Y)$ is factorial because of Lemma \ref{lem: fun Y upper} and our assumptions.
    Moreover, since $I_{sf}= \emptyset$,  Theorem \ref{thm: inv upper} implies that $\Orb_Y(Y)^\times=\CC^\times$.
   
    Let $j \in I_{uf}$, and $S_j:= Y(j) \setminus Y(0)$. 
    Because of the definition of $Y$, we have that if $i \in   I_{uf} \cup \{0\}$ and $i \neq j$, then $Y(j) \cap Y(i) \subseteq Y(0).$ 
    Therefore, the set $S_j$ is closed in $Y$.
    
 By the definition of $Y$, we have that the zero locus of the function $M^+(j) + M^-(j)$ restricted to the open set $Y(j)$ is $S_j$. 
    Moreover, since the upper cluster algebra $\uclu(t)$ is factorial, the element $M^+(j) + M^-(j)$ is irreducible in the ring  $ \Orb_Y\bigl(Y(j) \bigr) = \Li \bigl( t(j) \bigr)$.
    Indeed, this follows from \cite[Theorem 4.9]{cao2022valuation}.
    Therefore, we have that $S_j$ is irreducible and 
    
    \begin{equation}
        \label{eq:cox 11}
        \divv \bigl( M^+(j) + M^-(j) \bigr)_{| Y(j)}= S_j.
    \end{equation}
    Since $Y$ is smooth, and $Y \setminus Y(0)= \bigcup_{i \in I_{uf}} S_i$, we have an exact sequence 

\[\begin{tikzcd}
	{\Orb_Y\bigl(Y(0) \bigr)^\times} & {\bigoplus_{i \in I_{uf}} \ZZ S_i } & {\Pic(Y)} & {\Pic\bigl(Y(0) \bigr) } & 0.
	\arrow["\divv", from=1-1, to=1-2]
	\arrow[from=1-2, to=1-3]
	\arrow[from=1-3, to=1-4]
	\arrow[from=1-4, to=1-5]
\end{tikzcd}\]
Note that the group $\Pic\bigl(Y(0) \bigr)$ is trivial, as $Y(0)$ is an open subset of an affine space.
Hence, for proving that $\Pic(Y)= \{0\}$, it is sufficient to prove that for any $i \in I_{uf}$, the equality $\divv\bigl( x(0)_i \bigr)= S_i$ holds.

If $j \in I_{uf} \cup \{0\}$ and $j \neq i$, then $\bigl(x(0)_i \bigr)_{| Y(j)}=\bigl(x(j)_i \bigr)_{| Y(j)}$ because of \eqref{eq:glueing clust variety}. 
Therefore, we deduce that 
$$
\divv \bigl (x(0)_i \bigr)_{|Y(j)}=0.
$$
Then, Eq. \eqref{eq:glueing clust variety}, the fact that $x(i)_i \in \Orb_Y\bigl(Y(i)\bigr)^\times$ and Eq. \eqref{eq:cox 11} imply that 
$$
\divv \bigl(x(0)_i \bigr)_{|Y(i)}= \divv \biggl( \frac{M^+(i)+M^-(i)}{x(i)} \biggr)_{|Y(i)}=S_i.
$$
It follows that $\divv\bigl(x(0)_i\bigr)= S_i$ and that $\Pic(Y)= \{0\}.$ 
Regarding the coprimality statements, let $i \in I_{uf}$ and $j \in I_{uf} \setminus \{i\}$. 
The previous discussion shows that $x(0)_i$ and $x(0)_j$ are coprime on $Y$.  Moreover,  the functions $x(0)_i$ and $x(i)_i$ are coprime on $Y$ since $\divv \bigl(x(i)_i \bigr)_{|Y(i)}= 0$. 
\end{proof}


Let $D$ be a second copy of $I_{uf}$. 
We use the notation 
$$
D:=I'_{uf}= \{i': i \in I_{uf}\}.
$$ 
For $i' \in D$, let 
$$
E_{i'}:= Z(i) \setminus Y.
$$
The sets $E_{i'}$ $(i \in I_{uf})$ are closed irreducible subspaces of $Z$ of codimension one.
Indeed, if $i \in I_{uf}$ and $j \in I_{uf} \cup \{0\} \setminus \{i\}$, then
$$
Z(j) \cap Z(i)=Y(j)\cap Y(i) \subseteq Y.
$$ 
Therefore, we have that $E_{i'} \cap Z(j)= \emptyset$, which implies that $E_{i'}$ is closed in $Z$. 
Moreover, in the chart given by $\overline{T(i)}$, the divisor $E_{i'}$ corresponds to the vanishing locus of the function $x(i)_i$. 
Hence,  $E_{i'}$ is irreducible.
Furthermore, we clearly have that $Z \setminus Y= \bigcup_{i' \in D} E_{i'}$.

For $k' \in D$, let $\sigma_{k'}$  be the rational function $1$ seen as global section of the sheaf $\Orb(E_{k'})$. Let $L:=\big(\Orb(E_{k'})\big)_{k' \in D}$ and $\sigma:= (\sigma_{k'})_{k' \in D}$. 

With usual notation, we consider the sheaf $\Rc_L$ on $Z$ and the scheme $\lX_L= \Specbf(\Rc_L)$ associated to the collection of line bundles $L$.
Because of the previous discussion, we have that $(\lX_L,L, \sigma)$ is the standard homogeneously suitable for lifting structure of $\Cox(Z)$ corresponding to the open subset $Y$.

\bigskip

 Let $\lif t^D$ (resp. $\nu$) be the minimal monomial lifting (resp. minimal lifting matrix) of the seed $t$ with respect to  $(\lX_L, L, \sigma)$.
Theorem \ref{thm:min lifting Cox} and Lemma \ref{lem:Pic cluster var} imply that we have an inclusion of $\Pic(Z)$-graded algebras 
$$
\uclu(\lif t^D) \subseteq \Cox(Z).
$$
The rest of the paper is devoted to the proof of the following theorem.

\begin{theo}
    \label{thm:principal clust variety}
   Let $\Id \in \ZZ^{I_{uf} \times I_{uf}}$ be the identity matrix.
   With the natural identification between $D$ and $I_{uf}$,  we have that $\nu= (\Id, 0) \in \ZZ^{D \times (I_{uf} \sqcup I_{hf})}$. Moreover $\uclu(\lif t^D)= \Cox(Z)$.
\end{theo}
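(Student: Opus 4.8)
The plan is to prove the two assertions separately: the shape of the minimal lifting matrix $\nu$ by an explicit local computation, and the equality $\uclu(\lif t^D)=\Cox(Z)$ via Proposition~\ref{prop: equality conditions}.

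\smallskip
\emph{Step 1: $\nu=(\Id,0)$.} Fix $i\in I_{uf}$. By Lemma~\ref{lem: homogenisation Cox}, $\nu_{i',j}=-\val_{E_{i'}}(x(0)_j)$ for $j\in I$, and in the chart $Z(i)=\overline{T(i)}=\Spec\bigl(\coef[\Delta][x(i)_i][x(i)_k^{\pm1}]_{k\in I_{uf}\setminus\{i\}}\bigr)$ the prime divisor $E_{i'}$ is cut out by $x(i)_i$. If $j\neq i$ then $x(0)_j=x(i)_j$ (mutation at $i$ fixes the other cluster variables), and $x(i)_j$ is a coordinate of $\overline{T(i)}$ distinct from $x(i)_i$ --- polynomial if $j\in I_{hf}$, invertible if $j\in I_{uf}$ --- so $\val_{E_{i'}}(x(0)_j)=0$. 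If $j=i$, the exchange relation gives $x(0)_i=\bigl(M^+(i)+M^-(i)\bigr)/x(i)_i$; since $M^{\pm}(i)=x(i)^{B(i)_{\bullet i}^{\mp}}$ involve only the $x(i)_m$ with $m\neq i$ (as $b(i)_{ii}=0$), their sum is a nonzero element of $\coef[\Delta][x(i)_m^{\pm1}]_{m\neq i}$, hence not divisible by $x(i)_i$, so $\val_{E_{i'}}(x(0)_i)=-1$. Thus $\nu_{i',j}=\delta_{ij}$ on $I_{uf}$ and $\nu_{i',j}=0$ on $I_{hf}$, i.e.\ $\nu=(\Id,0)$. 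Consequently $\nu B=B^\circ$, $\lif B=\begin{pmatrix}B\\-B^\circ\end{pmatrix}$, and $\lif t^D$ is again of maximal rank since $t$ is.

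\smallskip
\emph{Step 2: reduction.} We already know $\uclu(\lif t^D)\subseteq\Cox(Z)=\Orb_{\lX_L}(\lX_L)$ and that the two rings share the fraction field $\CC(\lX_L)$. By Proposition~\ref{prop: equality conditions} it therefore suffices to show that for every $d=i'\in D$ the cluster valuation $\cval_{i'}$ attached to the frozen vertex $i'$ of $\lif t^D$ equals the valuation $\val_{i'}=\val_{p^*E_{i'}}$ built into the suitable‑for‑lifting structure. Since two discrete rank‑one valuations of a field that agree on a subring with that field as fraction field are equal, it is enough to check $\cval_{i'}=\val_{i'}$ on $R:=\CC[\mu_i(\lif x)]$, the polynomial ring on the cluster of $\mu_i(\lif t^D)$ (here $i\in I_{uf}$ is the mutable vertex paired with $i'$), whose fraction field is $\CC(\lX_L)$. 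By seed‑independence of cluster valuations, for $f\in R$ the value $\cval_{i'}(f)$ is the least power of $\mu_i(\lif x)_{i'}=\lif x_{i'}=\sigma_{i'}$ occurring in the expansion of $f$ as a polynomial in the $\mu_i(\lif x)_m$.

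\smallskip
\emph{Step 3: the chart computation.} The restriction of $p$ over $Z(i)=\overline{T(i)}$ is a trivial $T$‑bundle: the $L_d|_{Z(i)}$ are all trivial, since $E_d\cap Z(i)=\emptyset$ for $d\neq i'$ while $\Orb(E_{i'})|_{Z(i)}$ is trivialised by $\sigma_{i'}/x(i)_i$. Work in $p^{-1}(Z(i))=T\times\overline{T(i)}$ with torus coordinates $\tau_d$; then $\sigma_d=\tau_d$ for $d\neq i'$, $\sigma_{i'}=\tau_{i'}\,p^*x(i)_i$, and $\val_{i'}=\operatorname{ord}_{p^*x(i)_i}$. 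Using that $\lif x_j=p^*x(0)_j$ for $j\in I_{hf}$, $\lif x_k=\sigma_{k'}\,p^*x(0)_k$ for $k\in I_{uf}$ and $\lif x_d=\sigma_d$ for $d\in D$ (by Lemma~\ref{lem: homogenisation Cox} and $\nu=(\Id,0)$), together with $\lif B=\begin{pmatrix}B\\-B^\circ\end{pmatrix}$, $M^{\pm}(i)=x(0)^{B_{\bullet i}^{\pm}}$ and $x(0)_i\,x(i)_i=M^+(i)+M^-(i)$, one computes that both monomials $\lif x^{\lif B_{\bullet i}^{\pm}}$ equal $U\,p^*M^{\pm}(i)$ with the \emph{same} unit $U=\prod_{k\in I_{uf}\setminus\{i\}}\sigma_{k'}^{\,|b_{ki}|}$ (the vertex $i'$ does not occur since $b_{ii}=0$); hence
$$
\mu_i(\lif x)_i=\frac{U\,p^*\bigl(M^+(i)+M^-(i)\bigr)}{\lif x_i}=\frac{U\,p^*x(0)_i\,p^*x(i)_i}{\sigma_{i'}\,p^*x(0)_i}=\frac{U}{\tau_{i'}}.
$$
So $\mu_i(\lif x)_i$ is a Laurent monomial in the $\tau$'s, whence $\val_{i'}(\mu_i(\lif x)_i)=0$; likewise $\val_{i'}(\mu_i(\lif x)_m)=0$ for all $m\neq i'$, while $\val_{i'}(\mu_i(\lif x)_{i'})=\val_{i'}(\tau_{i'}\,p^*x(i)_i)=1$. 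Writing $f\in R$ as $\sum_{e\geq0}\sigma_{i'}^{\,e}g_e$ with $g_e\in\CC[\mu_i(\lif x)_m:m\neq i']$, we get $\val_{i'}(\sigma_{i'}^{\,e}g_e)=e+\val_{i'}(g_e)$, and it remains to see $\val_{i'}(g_e)=0$ when $g_e\neq0$. This holds because, modulo $p^*x(i)_i$, the images of the $\mu_i(\lif x)_m$ ($m\neq i'$) in the domain $\Orb_{\lX_L}\bigl(p^{-1}(Z(i))\bigr)/(p^*x(i)_i)\cong\CC[\tau_d^{\pm1},p^*x(i)_j,p^*x(i)_k^{\pm1}]$ are, by the explicit formulas above, obtained from the coordinates by an invertible monomial substitution, hence algebraically independent; so a nonzero $g_e$ stays nonzero, i.e.\ $\val_{i'}(g_e)=0$. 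Therefore $\cval_{i'}=\val_{i'}$ on $R$, and Proposition~\ref{prop: equality conditions} yields $\uclu(\lif t^D)=\Cox(Z)$.

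\smallskip
\emph{Main obstacle.} The crux is Step~3, specifically the monomial bookkeeping that turns the mutation formula for $\lif t^D$ into the identity $\mu_i(\lif x)_i=U/\tau_{i'}$: one must see the cancellation of $p^*x(i)_i$ and, above all, that $\mu_i(\lif x)_i$ is a \emph{unit} (of zero valuation) along $p^*E_{i'}$ rather than a function vanishing or having a pole there. The local inputs that legitimise these manipulations --- irreducibility of $x(i)_i$ in $\Orb(\overline{T(i)})$, the coprimality statements, and factoriality of $\uclu(t)$ --- are precisely what Lemma~\ref{lem:Pic cluster var} and the analysis preceding it provide.
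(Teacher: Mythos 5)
Your proposal is correct, and for the main assertion it takes a genuinely different route from the paper. Your Step 1 is essentially the paper's Lemma \ref{lem:computation nu Cox cluster var}, only computed downstairs on $Z$ (order of vanishing along $E_{i'}$ in the chart $\overline{T(i)}$) rather than upstairs on $\lX_L$ via the trivializations $\phi^{(k)}$; the two computations are equivalent through Lemma \ref{lem: homogenisation Cox}. For the equality $\uclu(\lif t^D)=\Cox(Z)$, the paper uses condition 2 of Proposition \ref{prop: equality conditions} and argues by contradiction: it takes a homogeneous $f=P/\lif x_{k'}$ with $\cval_{k'}(f)<0$, pulls $P$ back through $\phi^{(k)}$, and uses the $X(T)$-grading, divisibility by $x(k)_k$ and coprimality of $M^+(k)+M^-(k)$ with $x(k)_k$ to force all coefficients of $P$ to vanish. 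You instead verify condition 3 directly, showing $\cval_{i'}=\val_{i'}$ on the polynomial ring generated by the adjacent cluster $\mu_i(\lif x)$ (whose fraction field is the common function field, so agreement there suffices), by an explicit computation in the trivialized chart $p^{-1}(Z(i))\simeq T\times\overline{T(i)}$: the identity $\mu_i(\lif x)_i=U/\tau_{i'}$ with $U=\prod_{k\neq i}\sigma_{k'}^{|b_{ki}|}$ is correct (it is exactly the cancellation that, in the paper's proof, appears as the coprimality step), and the reduction modulo $p^*x(i)_i$ sends the remaining cluster variables to elements related to the chart coordinates by a monomial substitution with unimodular exponent matrix, hence algebraically independent, so nonzero $g_e$ have $\val_{i'}(g_e)=0$; since the resulting values $e$ are pairwise distinct there is no cancellation and $\val_{i'}(f)=\min\{e:g_e\neq0\}=\cval_{i'}(f)$. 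This argument avoids the grading bookkeeping and the contradiction structure entirely and in fact proves the slightly stronger statement that $\cval_{i'}$ coincides with the divisorial valuation $\val_{i'}$ on the whole function field, while the paper's argument keeps explicit control of the homogeneous components, which is in the spirit of the rest of the section. The only places worth tightening are cosmetic: justify the algebraic independence by noting that the images generate the fraction field of the quotient ring, which has transcendence degree equal to their number (or that the exponent matrix of the monomial substitution has determinant $\pm1$), and state explicitly that the distinctness of the valuations $e+0$ rules out cancellation in the sum $\sum_e\sigma_{i'}^{e}g_e$.
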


For  $i \in I_{uf} \cup \{0\}$, we identify $\overline{T(i)}$ and $Z(i)$ via the map $\overline{\iota_i}$. 
In particular, we think about the cluster $x(i)$ as a collection  of regular functions on $Z(i)$.  

\bigskip

 For $k, i \in I_{uf}$, consider the local parameter $\tau^{(k)}_{i'}$ for the line bundle $\Orb(E_{i'})$, on the open subset $Z(k)$, defined by 
 
 \begin{equation}
    \tau^{(k)}_{i'}:= 
    \begin{cases}
        1 & \text{if} \quad i\neq k\\
        x(k)_k^{-1} & \text{if} \quad i= k. 
    \end{cases}
\end{equation}
We have that

\begin{equation}
\label{eq 7 Cox}
    \frac{\sigma_{i'}}{\tau^{(k)}_{i'}}= 
    \begin{cases}
        1 & \text{if} \quad i\neq k\\
        x(k)_k & \text{if} \quad i= k. 
    \end{cases}
\end{equation}

As explained in Section \ref{sec:charact space} (see Formula \eqref{eq:local triv ssheaf of line b}), the collection of local parameters $\tau^{(k)}:=(\tau^{(k)}_{i'})_{i' \in D}$ gives a  $T$-equivariant open embedding $\phi^{(k)} : T \times Z(k) \longto \lX_L$. 
Similarly, recall that the open embedding determined by the collection of sections $\sigma$ (Eq. \eqref{eq: phi standard suitable}) is denoted by $\phi : T \times Y \longto \lX_L$.
Consider the commutative diagram

\[\begin{tikzcd}
	& {\lX_L} \\
	{T \times Y} && {T \times Z(k).}
	\arrow["\phi", from=2-1, to=1-2]
	\arrow["{\phi^{(k)}}"', from=2-3, to=1-2]
	\arrow[dashed, from=2-3, to=2-1]
\end{tikzcd}\]
Observe that the pullback of rational functions along the birational map $T \times Z(k) \dashrightarrow T \times Y$
corresponds to the field morphism induced by the map $\CC[T] \otimes \CC(Z) \longto \CC[T] \otimes \CC(Z) $ which is defined, on homogeneous elements, by
\begin{equation}
    \label{eq: fake change charts Cox}
    \varpi_n \otimes f \longmapsto \varpi_n \otimes  f \biggl( \frac{\sigma}{\tau^{(k)}}\biggr)^n \qquad (n \in \ZZ^D). 
\end{equation}
  Recall that, according to our general notation, we have that 
  $$
  \biggl(\frac{\sigma}{\tau^{(k)}}\biggr)^n= \prod_{i' \in D} \biggl(\frac{\sigma_{i'}}{\tau^{(k)}_{i'}}\biggr)^{n_{i'}} \qquad (n \in \ZZ^D).
  $$
  Using \eqref{eq 7 Cox}, we compute that Eq. \eqref{eq: fake change charts Cox} in fact given by

\begin{equation}
    \label{eq:change charts Cox}
    \varpi_n \otimes f \longmapsto \varpi_n \otimes f  \bigl(x(k)_k\bigr)^{n_{k'}} \qquad (n \in \ZZ^D). 
\end{equation}

\begin{remark}
    Recall that $t(0)=t$. 
    Therefore, to avoid awkward notation, from now on we write $x=(x_i)_{i \in I}$ (resp. $\lif x= (\lif x_i)_{i \in \lif \spc I}$) for the cluster variables of the seed $t$ (resp. $\lif t^D)$.
    Recall that $\lif I=I \cup D.$ 
\end{remark}

\begin{lemma}
    \label{lem:computation nu Cox cluster var}
    We have that $\nu= (\Id, 0) \in \ZZ^{D \times (I_{uf} \sqcup I_{hf})}$. Moreover, for any $k \in I_{uf}, \, i \in I$, and $j' \in D$, we have that

    $$
    \biggl(\phi^{(k)}\biggr)^*(\lif x_i)= \begin{cases}
    1 \otimes x(k)_i & \text{if} \quad i \in I_{hf}\\
        \varpi_{e_{i'}} \otimes x(k)_i & \text{if} \quad i \in I_{uf} \setminus \{k\} \\
          \varpi_{e_{k'}} \otimes\big(M^+(k) + M^-(k)\big) & \text{if} \quad i =k,
    \end{cases} 
    $$
    and 
    $$
    \biggl(\phi^{(k)}\biggr)^*(\lif x_{j'})= \begin{cases}
        \varpi_{e_{j'}} \otimes 1 & \text{if} \quad j \neq k\\
          \varpi_{e_{k'}} \otimes x(k)_k & \text{if} \quad j =k.
    \end{cases}
    $$
\end{lemma}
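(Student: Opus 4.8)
The plan is to compute everything chart-by-chart, using the explicit trivialization $\phi^{(k)}$ and the definition of the minimal monomial lifting from Section~\ref{sec: suitable lift schemes}. First I would establish the matrix $\nu$. By the Corollary following Lemma~\ref{lem: homogenisation Cox}, we have $\nu_{j'i} = -\val_{E_{j'}}(x_i)$ for $j' \in D$, $i \in I$. For the highly frozen variables $i \in I_{hf}$, the divisor $\divv(x_i)$ on $Y$ is zero (since $x_i \in \Orb_Y(Y)^\times$? no --- rather $x_i$ is an irreducible non-unit, but its divisor on $Y$ is its zero locus, which is disjoint from the boundary), so I must check $\val_{E_{j'}}(x_i) = 0$; this follows because, in the chart $\overline{T(j)} = Z(j)$ with $j \ne i$, we have $x_i = x(j)_i$ is either invertible or at least a unit along $E_{j'}$ (the proof of Lemma~\ref{lem:Pic cluster var} shows $\divv(x_i)_{|Y(j)} = 0$, and $E_{j'}$ lies in the closure of that chart direction). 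For $i \in I_{uf}$: in the chart $Z(j)$ with $j \ne i$, again $x_i = x(j)_i$ has $\val_{E_{j'}} = 0$ for $j' \ne i'$; and in the chart $Z(i) = \overline{T(i)}$, the function $x(i)_i$ is the equation of $E_{i'}$, and the identity $x_i = x(0)_i = \frac{M^+(i) + M^-(i)}{x(i)_i}$ (from \eqref{eq:glueing clust variety}) together with the irreducibility of $M^+(i) + M^-(i)$ in $\Li(t(i))$ (used in the proof of Lemma~\ref{lem:Pic cluster var}, via \cite[Theorem 4.9]{cao2022valuation}) shows $\val_{E_{i'}}(x_i) = -1$. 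Hence $\nu_{j'i} = \delta_{i,j}$ for $i \in I_{uf}$ and $\nu_{j'i} = 0$ for $i \in I_{hf}$, i.e.\ $\nu = (\Id, 0)$.

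Next I would compute the pullbacks $\bigl(\phi^{(k)}\bigr)^*(\lif x_i)$. Recall that $\lif x_i = \widetilde x_i$ for $i \in I$ and $\lif x_{j'} = \sigma_{j'}$ for $j' \in D$, where by Lemma~\ref{lem: homogenisation Cox} the homogenization $\widetilde x_i$ equals $x_i$ viewed as a section of $\Orb\bigl(\sum_{j' \in D} \nu_{j'i} E_{j'}\bigr)$. The map $\phi^{(k)}$ is the local trivialization \eqref{eq:local triv ssheaf of line b} associated to the collection of local parameters $\tau^{(k)} = (\tau^{(k)}_{j'})_{j' \in D}$ on $Z(k)$. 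So, by \eqref{eq:local triv ssheaf of line b}, a homogeneous section $\tau$ of $L^n$ pulls back to $\varpi_n \otimes \tau / (\tau^{(k)})^n$. Applying this:
\begin{itemize}
\item For $i \in I_{hf}$: $n = 0$, so $\bigl(\phi^{(k)}\bigr)^*(\lif x_i) = 1 \otimes x_i = 1 \otimes x(k)_i$ (the last equality because $i \ne k$, so $x_i = x(k)_i$ as functions on $Z(k)$, using \eqref{eq:glueing clust variety}).
\item For $i \in I_{uf} \setminus \{k\}$: $n = e_{i'}$ and $(\tau^{(k)})^{e_{i'}} = \tau^{(k)}_{i'} = 1$ since $i \ne k$; and $x_i = x(k)_i$ on $Z(k)$; hence the pullback is $\varpi_{e_{i'}} \otimes x(k)_i$.
\item For $i = k$: $n = e_{k'}$ and $(\tau^{(k)})^{e_{k'}} = \tau^{(k)}_{k'} = x(k)_k^{-1}$; and as a rational function on $Z(k)$, $x_k = x(0)_k = \frac{M^+(k)+M^-(k)}{x(k)_k}$. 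Thus $x_k / (\tau^{(k)})^{e_{k'}} = x_k \cdot x(k)_k = M^+(k) + M^-(k)$, giving $\varpi_{e_{k'}} \otimes (M^+(k) + M^-(k))$.
\end{itemize}
For $\lif x_{j'} = \sigma_{j'}$, which is the section $1$ of $\Orb(E_{j'})$, i.e.\ the homogeneous element of degree $e_{j'}$ equal to the rational function $1$: the pullback is $\varpi_{e_{j'}} \otimes 1/(\tau^{(k)})^{e_{j'}} = \varpi_{e_{j'}} \otimes 1/\tau^{(k)}_{j'}$. If $j \ne k$ this is $\varpi_{e_{j'}} \otimes 1$; if $j = k$ this is $\varpi_{e_{k'}} \otimes x(k)_k$. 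This matches \eqref{eq 7 Cox} exactly, as expected.

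The main obstacle I anticipate is bookkeeping the distinction between the various incarnations of the functions $x(i)_j$ --- as regular functions on $Y(i)$, on $Z(i)$, as rational functions on $Z$, and as semi-invariant functions on $\lX_L$ --- and making sure the trivialization formula \eqref{eq:local triv ssheaf of line b} is applied with respect to the correct collection of local parameters $\tau^{(k)}$ rather than the global sections $\sigma$. Concretely, the subtle point is the $i = k$ case, where one must correctly identify $x_k = x(0)_k$ as a rational function on the chart $Z(k)$ via the glueing map $\mu_{k0}$ and \eqref{eq:glueing clust variety}, and then cancel against $\tau^{(k)}_{k'} = x(k)_k^{-1}$ to land on the polynomial $M^+(k) + M^-(k)$. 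Once the $\nu$-computation is in hand, the rest is a direct substitution into \eqref{eq:local triv ssheaf of line b}, so I do not expect any genuine difficulty beyond careful tracking of these identifications; I would present the $\nu$ computation first as a standalone paragraph and then the two displayed formulas as consequences.
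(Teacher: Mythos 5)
Your proposal is correct and follows essentially the same route as the paper: both arguments rest on the glueing data \eqref{eq:glueing clust variety}, the identification of the lifted variables as sections via Lemma \ref{lem: homogenisation Cox}, and an explicit computation in the chart $\phi^{(k)}$. The only cosmetic difference is that you compute $\nu$ downstairs on $Z$ through $\nu_{j'i}=-\val_{E_{j'}}(x_i)$ and then apply the trivialization formula \eqref{eq:local triv ssheaf of line b} directly to the sections, whereas the paper computes $\val_{k'}(1\otimes x_i)$ upstairs on $\lX_L$ using the change-of-charts formula \eqref{eq:change charts Cox} and then factors $\lif x_i$ as $\lif x_{i'}(1\otimes x_i)$; these amount to the same calculation.
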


\begin{proof}
  Recall that $p: \lX_L \longto Z$ denotes the natural projection, and  $\val_{k'} : \CC(\lX_L) \longto \ZZ \cup \{\infty\}$ is the divisorial valuation associated to the divisor $p^*(E_{k'})$.
   Note that the divisor $p^*(E_{k'})$, along the chart given by $\phi^{(k)}$, corresponds to the closed subset $T \times \{ x(k)_k = 0\}$ of $T \times Z(k).$
    Recall that we identify $T \times Y$ with its image in $\lX_L$ by means of the map $\phi$, and this convention allows to think at $1 \otimes x_i $ as a rational function on $\lX_L$.

    We compute $\biggl(\phi^{(k)} \biggr)^*( 1 \otimes x_i)$ using \eqref{eq:change charts Cox}. 
    Since $1 \otimes x_i$ is $X(T)$-homogeneous of degree zero, we deduce, using the glueing data \eqref{eq:glueing clust variety}, that  
    $$ 
    \biggl(\phi^{(k)} \biggr)^*( 1 \otimes x_i)= \begin{cases}
        1 \otimes x(k)_i & \text{if} \quad i \neq k\\[0.6em]
        1 \otimes \frac{M^+(k)+ M^-(k)}{x(k)_k} & \text{if} \quad i=k.
    \end{cases}
    $$
    It follows at once that $\val_{k'}( 1 \otimes x_i)= - \delta_{ki}$, from which the description of the matrix $\nu$ follows.

    \bigskip

    Recall that $\lif x_{j'}= \sigma_{j'}$ and that $\lif x_{j'}$ is $X(T)$-homogeneous of degree $\varpi_{e_{j'}}.$ 
    Then, the desired expression for $\biggl(\phi^{(k)} \biggr)^*(\lif x_{j'})$ follows at once from \eqref{eq:change charts Cox}. 
    Moreover, because of the formula for $\nu$ that we have just proved, we have that 
    $$
    \lif x_i = \begin{cases}
        (1 \otimes x_i) & \text{if} \quad i \not \in I_{uf}\\
        \lif x_{i'}(1 \otimes x_i) & \text{if} \quad i \in I_{uf}.
    \end{cases}
    $$
   The desired expression for $\biggl(\phi^{(k)}\biggr)^*(\lif x_i)$ is then an immediate consequence of the previous discussion.
\end{proof}

\begin{coro}
    \label{cor: lif B partial com.}
    The generalized exchange matrix $\lif B$ of the seed $\lif t^D$ is 
$$
\lif B= \bmat B\\
-B^\circ 
\emat,
$$
where $B^\circ$ is the principal part of $B$.
\end{coro}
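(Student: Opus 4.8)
The plan is to derive this immediately from Formula \eqref{formula lifting B} together with the computation of the minimal lifting matrix carried out in Lemma \ref{lem:computation nu Cox cluster var}. By the very definition of the minimal monomial lifting (Definitions \ref{def: minimal lift matrix}, \ref{def: minimal mon lifting}), the generalized exchange matrix of $\lif t^D$ is
$$
\lif B = \bmat B \\ -\nu B \emat \in \ZZ^{(I \sqcup D) \times I_{uf}},
$$
so the only thing that needs to be checked is the equality $\nu B = B^\circ$, where $B^\circ = B_{|I_{uf} \times I_{uf}}$ is the principal part of $B$.

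To do this I would first recall from Lemma \ref{lem:computation nu Cox cluster var} that $\nu = (\Id, 0) \in \ZZ^{D \times (I_{uf} \sqcup I_{hf})}$, under the natural identification of $D$ with $I_{uf}$; here $\Id \in \ZZ^{I_{uf} \times I_{uf}}$ is the identity and $0 \in \ZZ^{D \times I_{hf}}$ is the zero block (note that $I = I_{uf} \sqcup I_{hf}$ in this section, since $I_{sf} = \emptyset$). Then I would spell out the matrix product entrywise: for $i' \in D$, identified with $i \in I_{uf}$, and $j \in I_{uf}$,
$$
(\nu B)_{i' j} = \sum_{\ell \in I} \nu_{i' \ell}\, b_{\ell j} = \sum_{\ell \in I_{uf}} \delta_{i \ell}\, b_{\ell j} = b_{i j},
$$
since the $I_{hf}$-columns of $\nu$ vanish and its $I_{uf}$-part is the identity. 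Hence $\nu B$ coincides with $B_{|I_{uf} \times I_{uf}} = B^\circ$, and therefore $\lif B = \bmat B \\ -B^\circ \emat$, which is exactly the claim.

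There is no genuine obstacle here: the corollary is a direct bookkeeping consequence of Lemma \ref{lem:computation nu Cox cluster var}. The only point requiring a little care is keeping the index identification $D \leftrightarrow I_{uf}$ consistent with the block notation of \eqref{formula lifting B}, so that the bottom block $-\nu B$ is correctly read as a matrix indexed by $D \times I_{uf}$ and identified with $-B^\circ$.
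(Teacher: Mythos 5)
Your proposal is correct and follows essentially the same route as the paper: the paper's proof simply invokes Lemma \ref{lem:computation nu Cox cluster var} to get $\nu=(\Id,0)$ and concludes $-\nu B=-B^\circ$, with the block formula \eqref{formula lifting B} implicit. Your entrywise verification of $\nu B = B^\circ$ is just a more explicit spelling-out of the same bookkeeping.
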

\begin{proof}
    By Lemma \ref{lem:computation nu Cox cluster var}, we have that $-\nu B= -B^\circ$.
\end{proof}

\begin{example}
    Let $t$ be the seed which is graphically represented by the following quiver 

\[\begin{tikzcd}
	{\bigcirc 1} & {\bigcirc 2} & {\bigcirc 3,}
	\arrow[from=1-2, to=1-3]
	\arrow[from=1-1, to=1-2]
\end{tikzcd}\]
and whose cluster is $x=(x_1,x_2,x_3)$. 
Then, according to the previous notation, the seed $\lif t^D$  is graphically represented by the quiver
    \[\begin{tikzcd}
	{\blacksquare 1'} & {\blacksquare 2'} & {\blacksquare 3'} \\
	{\bigcirc 1} & {\bigcirc 2} & {\bigcirc 3}
	\arrow[from=2-2, to=2-3]
	\arrow[from=2-1, to=2-2]
	\arrow[from=2-2, to=1-1]
	\arrow[from=2-3, to=1-2]
	\arrow[from=1-2, to=2-1]
	\arrow[from=1-3, to=2-2]
\end{tikzcd}\]
and its cluster variables are

$$ \begin{array}{ccc}
   \lif x_{1'}= \sigma_{1'} & \lif x_{2'}= \sigma_{2'} & \lif x_{3'}= \sigma_{3'}\\
   \lif x_1=x_1 \sigma_{1'} & \lif x_2= x_2 \sigma_{2'} & \lif x_3= x_3 \sigma_{3'}.
\end{array}$$
The degree of the cluster variables, with respect to the canonical graduation of the seed $\lif t^D$ is given, with obvious notation, below.
$$ \begin{array}{ccc}
  \varpi_{e_{1'}} & \varpi_{e_{2'}} & \varpi_{e_{3'}}\\
    \varpi_{e_{1'}} & \varpi_{e_{2'}} & \varpi_{e_{3'}}.
\end{array}$$
\end{example}

\begin{proof}[Proof of Theorem \ref{thm:principal clust variety}]
The statement on $\nu$ has been proved in Lemma \ref{lem:computation nu Cox cluster var}. Suppose by contradiction that $\uclu(\lif t^D)$ is strictly contained in $\Cox(Z)$. 
By Proposition \ref{prop: equality conditions}, there exists an index $k' \in D$ and an element $f \in \Cox(Z)$ such that $\cval_{k'}(f) < 0$. 
The  proof consists of 3 steps.

 \bigskip

\noindent \underline{Step 1:} Prove that $f$ can be assumed as in \eqref{eq 3cox}.

\noindent  \underline{Step 2:} Prove that we can assume  \eqref{eq:cox n_d=0},\eqref{eq 4cox} \eqref{eq:cox Z 3}.

\noindent  \underline{Step 3:} Prove that any coefficient $a_n$ of \eqref{eq 3cox} is zero, which contradicts the fact that $f $ belongs to $ \Cox(Z) \setminus \uclu(\lif t^D).$

\bigskip

\underline{Step 1.} Since the ring $\Cox(Z)$ is $X(T)$-graded, we can suppose that $f $ is homogeneous.
By Theorem \ref{thm: min mon lifting}, the ring $\Cox(Z)$ is contained in the upper cluster algebra $\uclu(\lif t)$.
Hence, recalling Eq. \eqref{eq: localising clust min lifting}, we deduce that up to multiplying by a monomial in the variables  $\lif x_{j'} \, (j'\in D)$ and in the unfrozen variables of the cluster $\lif x$,  we can suppose that 

    \begin{equation}
    \label{eq 3cox}
        f= \frac{P}{\lif x_{k'}}, \quad \text{where:} \qquad P= \sum_{n  \in \NN^{\lif \spc I}}a_n \lif x^n  \quad \text{and} \quad \lif x_{k'} \not | \, P.
    \end{equation}
Hence, $P$ is a polynomial in the variables of $\lif x$ which is not divisible by $\lif x_{k'}$.
Divisibility is intended in the polynomial ring in the cluster variables.
Moreover, since $f$ is homogeneous, we have  that  $f \in \Cox(Z)_{\varpi_{N}}$ for some $N \in \ZZ^D$. 

\underline{Step 2.} Because of Theorem \ref{laurent pheno}, up to adding to $f$ a polynomial in the variables of the cluster $\lif x$, we can assume that the following condition holds:

\begin{equation}
\label{eq:cox n_d=0}
    \text{if} \, \, n \in \NN^{\lif \spc I} \, \, \text{and} \,\, a_n \neq 0 ,  \, \, \text{then} \, \,n_{k'} = 0.
\end{equation}
In other words, we assume that the sum defining $P$ runs over the set $\NN^{\lif \spc I \setminus \{k' \}}$, which is canonically identified with the set of elements of $\NN^{\lif \spc I}$ whose coordinate indexed by $k'$ equals zero.

For $ n \in \ZZ^{\lif \spc I \setminus \{k'\}}$, we denote by $n^I \in \ZZ^{I}$ and $n' \in \ZZ^{D \setminus \{k'\}}$ the two coordinates of $n$ with respect to the decomposition $\lif I \setminus \{k'\}= I \sqcup (D \setminus \{k'\}).$
 The fact that $f$ is homogeneous of degree $\varpi_N$ imposes, using the expression for $\nu$ given by Lemma \ref{lem:computation nu Cox cluster var}), that

\begin{equation}
    \label{eq 4cox}
   \text{if} \, \, n \in \NN^{\lif \spc I \setminus \{k' \}} \, \, \text{and} \, \, a_n \neq 0, \, \, \text{then} \quad  \varpi_{\nu n^I + n'}= \varpi_N + \varpi_{e_{k'}}. \quad \text{That is:} \quad \nu n^I + n'=N + e_{k'}.
\end{equation}
We stress that, in Formula \eqref{eq 4cox}, $\ZZ^{D \setminus \{k'\}}$ is naturally identified with the subset of $\ZZ^{D}$ of vectors whose coordinate indexed by $k'$ is zero.

We denote by $x(k)_{\setminus k} \in \CC(Z)^{I \setminus \{k\}}$ the collection of elements obtained by forgetting the variable $x(k)_k$ form the cluster $x(k)$.
Moreover, for $n \in \ZZ^{\lif \spc I \setminus \{k\}}$, we denote by $n^{I}_{\setminus k} \in \ZZ^{I \setminus \{k\}}$ the vector obtained by forgetting the $k$-th coordinate of $n^I$.
According to our conventions, the notation $x(k)_{\setminus k}^{n^I_{\setminus k}}$ stands for a well defined Laurent monomial in the cluster variables of $x(k)$ different from $x(k)_k$.

 Lemma \ref{lem:computation nu Cox cluster var}, Eq. \eqref{eq 3cox}, \eqref{eq:cox n_d=0}, and \eqref{eq 4cox} imply that 
\begin{equation}
    \label{eq:6 Cox}
 \biggl(\phi^{(k)}\biggr)^*(P)= \varpi_{N+ e_{k'}} \otimes \sum_{n \in \NN^{\lif \spc I \setminus \{k'\}}} a_n  x(k)_{\setminus k}^{n^I_{\setminus k}} \bigl( M^+(k) + M^-(k)\bigr)^{n^I_k}.
 \end{equation}
As the function $f$ is regular on $\lX_L$ and $ \biggl(\phi^{(k)}\biggr)^*(\lif x_{k'})= \varpi_{e_{k'}} \otimes x(k)_k$, we deduce that the right hand side of \eqref{eq:6 Cox} is divisible, in the ring $\Orb_{T \times {Z(k)}}( T \times {Z(k)})$, by the element $\varpi_{e_{k'}} \otimes x(k)_k$.
Considering the $e_{k'}$-coefficient in \eqref{eq 4cox}, we deduce that:

\begin{equation}
    \label{eq:cox Z eq 2}
     \text{if} \, \, n \in \NN^{\lif \spc I \setminus \{k' \}} \, \, \text{and} \, \, a_n \neq 0, \quad \text{then} \quad n^I_{k}= N_{k'}+1.
\end{equation} 

Therefore, in the right hand side of \eqref{eq:6 Cox}, we can factor out the term 
$
\bigl( M^+(k) + M^-(k)\bigr)^{n^I_k}.
$
Moreover, since $M^+(k) + M^-(k)$ is coprime with $x(k)_k$, the previously mentioned divisibility is equivalent the fact that the element

$$
\sum_{l \in \ZZ^{I \setminus \{k\}}}  \biggl( \sum_{ \substack{n \in \NN^{\lif \spc I \setminus \{k'\}} \\
n^I_{\setminus k}=l}} a_n \biggr) x(k)_{\setminus k}^l
$$
is divisible by $x(k)_k$. 
In other words, we can simplify the term 
$
\bigl( M^+(k) + M^-(k)\bigr)^{n^I_k}.
$
Here, divisibility is intended in the polynomial ring $\CC[x(k)]$ in the cluster variables of $x(k).$
By the definition of $x(k)_{\setminus{k}},$ any monomial of the form $x(k)_{\setminus{k}}^l \, (l \in \ZZ^{I \setminus \{k\}}), $ does not have $x(k)$ in its support.
It follows that:
\begin{equation}
    \label{eq:cox Z 3}
\text{for any} \, \, l \in \ZZ^{I \setminus \{k\}}, \, \, \text{then} \quad  \sum_{ \substack{n \in \NN^{\lif \spc I \setminus \{k'\}} \\
n^I_{\setminus k}=l}} a_n=0.
\end{equation}

\underline{Step 3.} For any $l  \in \ZZ^{I \setminus \{k\}} $, it is immediate to verify, using  \eqref{eq 4cox}, that there is at most one index $n \in \NN^{\lif \spc I \setminus{k'}}$ satisfying $a_n \neq 0$ and $n^I_{\setminus k}= l$. 
This implies that, for any $n  \in \NN^{\lif \spc I \setminus \{k'\}}$, then $a_n=0$.
Using \eqref{eq:cox n_d=0}, it follows that $f=0$, which is a contradiction.
\end{proof}

\begin{example}
    We continue Example \ref{ex: diago comp 1, construction}. 
   We have that $D= \{1'\}$, and the seed $\lif t^D$ is graphically represented by the quiver
   \[\begin{tikzcd}
	{\blacksquare 1'} & {} \\
	{\bigcirc 1 } && {\blacksquare 2,}
	\arrow[from=2-1, to=2-3]
\end{tikzcd}\]
with cluster variables 
$$
\lif x_{1'}= \sigma_{1'}, \quad \lif x_1= \sigma_{1'} x_1, \quad \lif x_2 = x_2.
$$
The degree of the initial cluster variables, as homogeneous elements of $\Cox(Z)$, is respectively
$$
\varpi_{e_{1'}}, \quad \varpi_{e_{1'}}, \quad 0.
$$
Let $\lif x(1)_1$ be the cluster variable associated to the vertex $1$ in the seed $\mu_1(\lif t^D).$
The element $\lif x(1)_1$ is homogeneous and its degree is $-\varpi_{e_{1'}}$.
Observe that the cluster algebra $\clu(\lif t^D)$ equals the upper cluster algebra $\uclu(\lif t^D)$ by \cite[Proposition 3.3]{bucher2019upper}.
Therefore, since the vertex $1$ is the only mutable vertex of the seed $\lif t^D$, Theorem \ref{thm:principal clust variety} implies that $\Cox(Z)$ is generated, as a $\CC$-algebra, by the homogeneous elements
$$
\sigma_{1'}, \quad \lif x_1, \quad \lif x_2, \quad \lif x(1)_1,
$$
subject to the relation 
$$
\lif x_1 \lif x(1)_1= 1 + \lif x_2.
$$
In particular, $\Cox(Z)$ is finitely generated as a $\CC$-algebra. 
Recall that, in Example \ref{ex: diago comp 1, construction}, we showed that the scheme $Z$ is not separated. 
Therefore, $Z$ is not quasi-projective.
Nevertheless, by \cite[Theorem 1]{hausen2002equivariant} and the finite generation of $\Cox(Z)$, the scheme $Z$ admits a closed embedding into a smooth toric prevariety of affine intersection. 
\end{example}

\bibliographystyle{alpha}

\bibliography{biblio}

\end{document}